\newcommand{\R}{{\mathbb R}}
\newtheorem{theorem}{Theorem}[section]
\newtheorem{lemma}[theorem]{Lemma}
\newtheorem{proposition}[theorem]{Proposition}
\newtheorem{remark}[theorem]{Remark}
\newtheorem{definition}{Definition}[section]
\numberwithin{equation}{section}
\begin{document}

\title[]{Low regularity solutions  of non-homogeneous boundary value problems of a higher order Boussinesq equation in a quarter plane}
\author[]{Shenghao Li}
\author[]{Min Chen}
\author[]{Bingyu Zhang}
\date{}
\begin{abstract}
In this article, we study an initial-boundary-value problem of the sixth order Boussinesq equation on a half line
with nonhomogeneous boundary conditions:
\begin{equation*}
\begin{cases}
u_{tt}-u_{xx}+\beta u_{xxxx}-u_{xxxxxx}+(u^2)_{xx}=0,\quad x>0\mbox{, }t>0,\\
u(x,0)=\varphi (x), u_t(x,0)=\psi ''(x), \\
u(0,t)=h_1(t), u_{xx}(0,t)=h_2(t), u_{xxxx}(0,t)=h_3(t),
\end{cases}
\end{equation*}
where $\beta=\pm1$. It is shown that the problem is locally well-posed  in $H^s(\R^+)$ for $-\frac12<s\leq 0$ with initial condition $(\varphi,\psi)\in H^s(\R^+)\times H^{s-1}(\R^+)$ and boundary condition
$(h_1,h_2,h_3) $ in the product space $H^{\frac{s+1}{3}}(\R^+)\times H^{\frac{s-1}{3}}(\R^+)\times H^{\frac{s-3}{3}}(\R^+)$.
\end{abstract}

\keywords{boundary value problem; Bourgain space; sixth order Boussiensq equation. }


\maketitle


\section{Introduction}
\setcounter{equation}{0}
\setcounter{theorem}{0}
Consideration is given to the sixth order Boussinesq equation,
\[u_{tt}-u_{xx}+\beta u_{xxxx}-u_{xxxxxx}+(u^2)_{xx}=0, \quad \beta=\pm1.\]
 It is derived by Christov, Maugin and Velarde \cite{28} as a model for  surface long water waves    with small amplitude in a channel with flat bottom and it is also proposed in modeling the nonlinear lattice dynamics in elastic crystals by Maugin \cite{66}.

 The sixth order Boussinesq equation has structural similarities related to both the linear ``good'' Boussinesq equation
 \begin{equation*}
u_{tt}-u_{xx} + u_{xxxx}=0,
\end{equation*}
 and the linear KdV-type equation
  \begin{equation*}
u_{t}\pm u_{xxx}=0,
\end{equation*}
where the relation to the KdV-type equation can be seen by considering its the linearized equation as
 \begin{equation*}
    u_{tt}-u_{xxxxxx}:=(\partial_t+\partial_{xxx})(\partial_t-\partial_{xxx})u=0.
\end{equation*}
 Both of the Boussinesq equation and the KdV equation have been well-studied during the past decades \cite{4,6,7,12,17,24,32,40,41,42,54,53,52,50,59,60,64,63,62,61,67,75,76,79,84,82,83,81,85} and these work has been contributed well to the research on the sixth order Boussiensq equation such as its initial-value problem (IVP) posed on $\omega$,
\begin{equation}\label{ivp}
    \begin{cases}
     u_{tt}-u_{xx}+(u^2)_{xx}+\beta u_{xxxx}-u_{xxxxxx}=0,\ \ \ x\in \omega,\\
     u(x,0)=\varphi (x), u_t(x,0)= \psi''(x).
    \end{cases}
\end{equation}
 For $\omega= \R$, Esfahani, Farah and Wang \cite{35} have shown the local well-posedness in $H^s(\R)$  for initial data  $(\varphi,\psi)\in  H^s(\mathbb{R})\times H^{s-1}(\mathbb{R})$ with $s=0,1$, by applying the Strichartz type smoothing inherited from Linares' work \cite{59} for the Boussinesq equation;  Esfahani and Farah \cite{34} have then improved the result to  $s>-\frac{1}{2}$ by using a related   Bourgain space  inherited from Kenig, Ponce and Vega's work \cite{50} on the KdV equation; later on, they have proven the conclusion for $s>-\frac{3}{4}$ (c.f. \cite{37}) by using the $[k;Z]$-multiplier norm method for the KdV equation. In addition, its well-posedness on a periodic domain, $\omega=\mathbb{T}$, is shown by Wang and Esfahani \cite{79} for $s>-\frac{1}{2}$.

 However, the theory for the initial-boundary-value problem (IBVP) of the sixth order Boussinesq equation has been less developed. 
  Li, Chen and Zhang have studied its well-posedness on a half plane \cite{96} and a finite domain \cite{98} for $s\geq 0$. In this article, we continue our study for the IBVP of the sixth order Boussinesq equation on a half plane,
\begin{equation}\label{0-1}
\begin{cases}
u_{tt}-u_{xx}+\beta u_{xxxx}-u_{xxxxxx}+(u^2)_{xx}=0,\hspace{0.3in}\mbox{for }x>0\mbox{, }t>0,\\
u(x,0)=\varphi (x), u_t(x,0)=\psi ''(x), \\
u(0,t)=h_1(t), u_{xx}(0,t)=h_2(t), u_{xxxx}(0,t)=h_3(t).\\
\end{cases}
\end{equation}

\begin{figure}[!h]
\centering
   \includegraphics[width=10cm]{./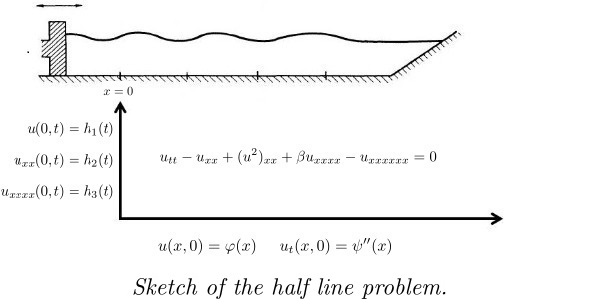}\\
\end{figure}

The figure  shows the model when the sixth order Boussinesq equation is applied to a physical problem. It is a model of unidirectional wave in an open uniform channel with a wave-maker at one end.  The $x$-axis denotes the distance from the wavemaker, the $t$-axis denotes the elapsed time and the dependent function $u(x,t)$ denotes the velocity of fluid along the $x$-direction at certain layer of the channel at position $x$ and on time $t$.

 We   aim  to establish the local well-posedness for the IBVP
 (\ref{0-1}) in the classic $L^2-$based Sobolev spaces, $H^s(\mathbb{R}^+)$, when the initial data  lies in $H^s(\mathbb{R}^+)\times H^{s-1}(\mathbb{R}^+)$ and the boundary   data  $(h_1,h_2,h_3)$ is drawn from the product space
 \begin{equation}\label{req}
 H^{s_1}(0,T)\times H^{s_2}(0,T)\times H^{s_3}(0,T)
\end{equation}
for some appropriate $s_1$, $s_2$ and $s_3$ depending on $s$. The following two questions  arise naturally.
\begin{itemize}
  \item  While the initial data  $\varphi $ and $\psi$ are required  to be in the spaces $H^s (\mathbb{R}^+)$ and $\ H^{s-1}(\mathbb{R}^+)$, respectively,  for the well-posedness of the IBVP  (\ref{0-1}) in the space $H^s (\mathbb{R}^+)$, what are the optimal  (minimum) regularity requirements  on the boundary data $(h_1,h_2,h_3)$ for the IBVP (\ref{0-1})?
  \item What is the smallest value  of $s$ such that the IBVP (\ref{0-1}) is  well-posed in the space $H^s (\mathbb{R}^+)$?
\end{itemize}
 We notice that the solution $u$ of the Cauchy problem for the linear sixth order Boussiensq equation,
 \begin{equation*}
    \begin{cases}
     u_{tt}-u_{xx}+\beta u_{xxxx}-u_{xxxxxx}=0,\ \ \ x\in \mathbb{R},\\
     u(x,0)=\varphi (x), u_t(x,0)= \psi''(x),
    \end{cases}
\end{equation*}
   possesses the sharp Kato smoothing property for any $s$:
 \[ \sup_{x\in \mathbb{R}} \| \partial ^j_x u(x, \cdot)\|_{H_t^{\frac{s-j+1}{3}} (\R^+) } \lesssim  \|\varphi\|_{H^s (\R)} + \|\psi\|_{H^{s-1} (\R)}, \quad  j=0, 1,...,5,\]
it is  therefore necessary  to have:
 \begin{equation} \label{x-1}
 h_1\in H^{\frac{s+1}{3}}_{loc} (\R^+), \quad h_2 \in H^{\frac{s-1}{3}}_{loc} (\R^+), \quad h_3 \in H^{\frac{s-3}{3}}_{loc} (\R^+)
  \end{equation}
  for the IBVP (\ref{0-1})  being well-posed in the space $H^s (\R^+)$.   It  has already been shown that (\ref{x-1}) is  also sufficient and the related IBVP  is indeed well-posed for $s\geq 0$ (c.f. \cite{97}). Thus, it is  natural to  expect similar conditions as in \eqref{x-1} still held for the IBVP (\ref{0-1})
    for $s\leq 0$. In this article, we   use the Bourgain-type space (c.f \cite{23,24,54,53,52,50,51}) to establish the local well-posedness for $-\frac12<s\leq0$.

 Before stating the main theorem, let us denote $\langle x\rangle=\sqrt{1+x^2}$ and introduce the related Bougain-type space, $X^{s,b}$, for the sixth order Boussinesq equation.
\begin{definition}
For $s,b\in\R$, $X^{s,b}$ denotes the completion of the Schwartz class ${\mathcal S}(\R^2)$ with
\[\|u\|_{X^{s,b}(\R^2)}=\left\|\langle\xi\rangle^s\langle|\tau|-\phi(\xi)\rangle^b\widehat{u}(\xi,\tau)\right\|_{L^2_{\xi,\tau}(\R^2)},\]
where $\phi(\xi)=\sqrt{\xi^6+\beta \xi^4+\xi^2}$ and $"\wedge"$ denotes the Fourier transform on both  time and space.
\end{definition}
\noindent  Moreover, in order to obtain a better cancellation for the bilinear estimates, an equivalence relation in norms,
\[\|u\|_{X^{s,b}(\R^2)}\sim \|\langle \xi\rangle^s\langle |\tau|-|\xi|^3+\frac{\beta}{2} |\xi|\rangle^b\widehat{u}(\xi,\tau)\|_{L^2_{\xi,\tau}(\R^2)},\]
will be adapted which is inspired by  Farah  \cite{34,41}. In addition,  the Bourgain-type space defined here is for functions on the whole plane, $(x,t)\in\R\times \R$. However, the IBVP (\ref{0-1}) is only  posed on a quarter plane, $\R^+\times \R^+$. It is therefore necessary to define a restricted  Bourgain-type space as followed,
\[X^{s,b}\left(\R^+\times \Omega\right):=X^{s,b}|_{\R^+\times \Omega}, \quad \Omega\subset \R^+,\]
with the quotient norm,
\[\|u\|_{X^{s,b}\left(\R^+\times \Omega\right)}:=\inf_{w\in X^{s,b}(\R^2)}\{\|w\|_{X^{s,b}(\R^2)}: w(x,t)=u(x,t)\quad \mbox{on} \quad \R^+\times \Omega\}.\]
 For the sake of simplicity on notation, we consider $X^{s,b}$ as $X^{s,b}(\R^+\times \R^+)$ if no domain are imposed.    The main result for this article is as below.
\begin{theorem}\label{theorem}
Let $-\frac12<s\leq 0$ be given, there exists a $T>0$ such that for any
\[(\varphi,\psi,h_1,h_2,h_3)\in H^s(\R^+)\times H^{s-1}(\R^+)\times  H^{\frac{s+1}{3}}(\R^+)\times H^{\frac{s-1}{3}} (\R^+)\times H^{\frac{s-3}{3}} (\R^+), \]
the IBVP (\ref{0-1}) admits a unique solution
\[u\in C(0,T; H^s(\R^+))\cap X^{s,b}(\R^+\times (0,T))\]
for  $\frac12-b>0$ sufficient small. Moreover, the corresponding solution map is Lipschitz continuous.
\end{theorem}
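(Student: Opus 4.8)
The plan is to recast the IBVP \eqref{0-1} as an equivalent integral equation posed on the whole plane and then close a contraction mapping in the restricted Bourgain space $X^{s,b}(\R^+\times(0,T))$, in the spirit of the Duhamel boundary forcing technique of Colliander--Kenig and Holmer adapted to dispersive initial-boundary value problems. First I would dispose of the initial data. Since $-\tfrac12<s\le 0$, there is a bounded extension operator $H^s(\R^+)\to H^s(\R)$, so I extend $(\varphi,\psi)$ to $(\widetilde\varphi,\widetilde\psi)\in H^s(\R)\times H^{s-1}(\R)$ with comparable norms; this is exactly where the restriction $s>-\tfrac12$ enters naturally. The solution of the linear whole-line problem with these data is, on the Fourier side,
\[
\cos\!\big(t\,\phi(\xi)\big)\,\widehat{\widetilde\varphi}(\xi)+\frac{\sin\!\big(t\,\phi(\xi)\big)}{\phi(\xi)}\,\widehat{\widetilde\psi''}(\xi),
\]
which I denote $W_{\R}(t)(\widetilde\varphi,\widetilde\psi)$. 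Because $\phi(\xi)\sim|\xi|^3$ for large $|\xi|$, the factor $\xi^2/\phi(\xi)$ is bounded and both terms land at regularity $s$. After multiplication by a time cutoff $\eta(t)$, the standard Bourgain machinery gives $\eta(t)W_{\R}(t)(\widetilde\varphi,\widetilde\psi)\in X^{s,b}$ with norm controlled by the data, and the sharp Kato smoothing estimate quoted above shows that its spatial traces $\partial_x^j u(0,\cdot)$ lie in $H_t^{(s-j+1)/3}$.

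The heart of the construction is the \emph{boundary forcing operator}. Taking the temporal Fourier transform of the linear equation, solutions of the form $e^{\mu x}$ must satisfy the characteristic equation
\[
\mu^6-\beta\mu^4+\mu^2+\tau^2=0,
\]
whose six roots split, according to the sign of their real part, into growing and decaying modes as $x\to+\infty$, together with oscillatory modes sitting near the imaginary axis; only the admissible (nonincreasing) modes are retained on the quarter plane. Matching the three boundary values $u(0,t),\,u_{xx}(0,t),\,u_{xxxx}(0,t)$ then forces one to invert a $3\times3$ matrix assembled from the retained roots together with their squares and fourth powers. The main analytic work here is to prove that this matrix is uniformly invertible for large $|\tau|$ and that the resulting boundary operators carry the product space $H^{(s+1)/3}\times H^{(s-1)/3}\times H^{(s-3)/3}$ boundedly into $X^{s,b}$ while reproducing the prescribed traces. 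One then replaces $(h_1,h_2,h_3)$ by the residual data obtained after subtracting the traces of $\eta(t)W_{\R}(t)(\widetilde\varphi,\widetilde\psi)$, which by the Kato smoothing estimate lies in precisely the spaces \eqref{x-1}, so that the boundary operator only has to absorb this residual.

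With the linear pieces in place, the nonlinearity enters through a Duhamel term and the contraction reduces to the bilinear estimate
\[
\big\|\partial_x^2(uv)\big\|_{X^{s,b-1}}\;\lesssim\;\|u\|_{X^{s,b}}\,\|v\|_{X^{s,b}},\qquad \tfrac12-b>0\ \text{small},
\]
valid for $-\tfrac12<s\le0$. Here I would use the norm equivalence $\langle|\tau|-\phi(\xi)\rangle\sim\langle|\tau|-|\xi|^3+\tfrac{\beta}{2}|\xi|\rangle$ recorded after the definition, which replaces the radical $\phi$ by the smooth symbol $|\xi|^3-\tfrac{\beta}{2}|\xi|$ and exposes the cancellation in the resonance function $|\xi|^3-|\xi_1|^3-|\xi_2|^3$ that underlies the KdV bilinear theory of Kenig--Ponce--Vega. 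The two extra $x$-derivatives in $\partial_x^2(uv)$ are then paid for by the cubic gain available in the modulation weight $\langle|\tau|-|\xi|^3+\tfrac\beta2|\xi|\rangle^{1-b}$, and the worst high$\times$low frequency interaction pins the threshold at $s=-\tfrac12$.

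Finally I would assemble the full solution operator, namely free evolution plus boundary forcing plus nonlinear Duhamel, all localized by $\eta(t)$, and show that it is a contraction on a ball in $X^{s,b}(\R^+\times(0,T))$ for $T$ sufficiently small, using the $T^{\theta}$ smoothing gained from the time restriction in the Duhamel and boundary terms; the Lipschitz continuity of the solution map then follows by applying the same bilinear estimate to differences. I expect the boundary forcing analysis---specifically the uniform invertibility of the $3\times3$ matching matrix and the proof that the three boundary operators send the borderline boundary regularities into $X^{s,b}$ with no loss---to be the principal obstacle, with the bilinear estimate at the endpoint $s=-\tfrac12$ the other decisive ingredient.
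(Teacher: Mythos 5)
Your overall architecture (extension of the initial data, explicit boundary integral operator built from the three decaying roots of the characteristic equation, Bourgain-space contraction) is the same as the paper's, but there is a genuine gap at the point the paper identifies as the crux: you never correct the boundary values of the Duhamel term. The operator $\int_0^t[W_\R(0,g)](x,t-\tau)\,d\tau$ does not vanish at $x=0$, so (Lemma \ref{force}) one must subtract $W_{bdr}(\vec q)$ where $\vec q$ collects its traces $q(0,t)$, $q_{xx}(0,t)$, $q_{xxxx}(0,t)$, and to bound $W_{bdr}(\vec q)$ one must show these traces land in $H^{\frac{s+1}{3}}\times H^{\frac{s-1}{3}}\times H^{\frac{s-3}{3}}$. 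This forces two ingredients absent from your plan: the sharp Kato smoothing estimates for the \emph{forced} linear problem (Proposition \ref{k1}), which are only valid for $b<\frac12$ and whose right-hand sides for $j=1,2,4,5$ involve the extra quantities $\|\langle\tau\rangle^{\frac{s-j+1}{3}}\int_D\langle\xi\rangle^{j-3}|\xi^2\widehat{g}/\phi(\xi)|\,d\xi\|_{L^2_\tau}$; and, correspondingly, a second family of bilinear estimates (Proposition \ref{t2}) controlling exactly those quantities by $\|u\|_{X^{s,b}}\|v\|_{X^{s,b}}$. Your proposal reduces everything to a single standard bilinear estimate and therefore cannot close the fixed-point argument as written.

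Relatedly, the form of the bilinear estimate you propose, $\|\partial_x^2(uv)\|_{X^{s,b-1}}\lesssim\|u\|_{X^{s,b}}^{}\|v\|_{X^{s,b}}$ with $b<\frac12$, is too weak to be useful: the inhomogeneous linear estimate (Lemma \ref{f1}) requires the source in $X^{s,b'}$ with $b'>-\frac12$, whereas $b-1<-\frac12$ when $b<\frac12$. What is actually needed, and what the paper proves (Proposition \ref{t1}), is $\|(\xi^2\widehat{uv}/\phi(\xi))^\vee\|_{X^{s,-b}}\lesssim\|u\|_{X^{s,b}}\|v\|_{X^{s,b}}$ with the \emph{stronger} weight $\langle\cdot\rangle^{-b}$, $-b>-\frac12$; this is not the estimate available in the literature (Esfahani--Farah's version requires $b>\frac12$) and is precisely why Section 4 of the paper exists. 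Your identification of the resonance structure via the equivalence $\langle|\tau|-\phi(\xi)\rangle\sim\langle|\tau|-|\xi|^3+\frac{\beta}{2}|\xi|\rangle$ and of $s=-\frac12$ as the threshold is correct, but without the $b<\frac12$ versions of both the bilinear estimate and the forced-problem trace estimates, the boundary correction of the Duhamel term cannot be absorbed and the contraction does not close.
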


It is notable that the theorem shows no difference in the local well-posedness for different $\beta$ (consider  $\beta=\pm 1$). However, without the sixth order term (i.e. $u_{xxxxxx}$ in (\ref{0-1})), theories for the ``bad" ($\beta=-1$) and ``good" ($\beta=1$) Boussinesq equations are quite distinct. In addition, the conclusion, $s>-\frac12$, for the half line problem of the sixth order Boussinesq equation is better when comparing to, $s>-\frac14$ (c.f. \cite{97}), the best result for the Boussinesq equation.  Moreover, based on the proof of the theorem, the local well-posedness will not be compromised for other types of boundary conditions, such as
\begin{equation*}
    u(0,t)=h_4(t),u_{x}(0,t)=h_5(t),u_{xx}(0,t)=h_6(t),
\end{equation*}
\noindent or
\begin{equation*}
    u_x(0,t)=h_7(t),u_{xx}(0,t)=h_8(t),u_{xxx}(0,t)=h_9(t).
\end{equation*}

The main idea of this paper is  inspired by the works of Bona-Sun- Zhang \cite{17, bsz-5, bsz-6},  Colliander-Kenig \cite{colliander} and Holmer \cite{holmer, holmer-2}  for the IBVP of the KdV equation \cite{17,18},
 the key  is to explore an explicit integral formula for the associated linear problem:
\begin{equation}\label{01}
\begin{cases}
u_{tt}-u_{xx}+\beta u_{xxxx}-u_{xxxxxx}=0,\quad x>0,t>0,\\
u(x,0)=0, u_t(x,0)=0, \\
u(0,t)=h_1(t), u_{xx}(0,t)=h_2(t), u_{xxxx}(0,t)=h_3(t).
\end{cases}
\end{equation}
However, because of $s\leq0$, the  Strichartz estimates that we established for the half line problem in \cite{96} would not   help, new estimates including  the one in the Bourgain-type space, $X^{s,b}$,  will be introduced.  For $-\frac12<s\leq0$ and $ b $ sufficiently close to $\frac12$, one has
\begin{align*}
    \sup_{t\geq 0} \|u(\cdot,t)\|_{H_x^s(\mathbb{R}^+)} + \sup_{x \geq 0}\|\partial&^j_xu(x,\cdot)\| _{H^{\frac{s-j+1}{3}}(\mathbb{R}^+)}+\|\eta(t)u(x,t)\|_{X^{s,b}}\\
    &\lesssim \|(h_1,h_2,h_3)\|_{H^{\frac{s+1}{3}}(\mathbb{R}^+)\times H^{\frac{s-1}{3}}(\mathbb{R}^+)\times H^{\frac{s-3}{3}}(\mathbb{R}^+)},
\end{align*}
with $j=0,2,4$ and $\eta$ being a cut-off function. Such regularities of the boundary data perfectly match the sharp Kato smoothing
 $$
\sup_{x\in \mathbb{R}}\|\partial_x^j u(x,\cdot)\|_{H^{\frac{s-j+1}{3}}(\mathbb{R}^+)}\lesssim \|(p,q)\|_{H^s(\mathbb{R})\times H^{s-1}(\mathbb{R})}, \quad j=0,2,4,
$$
 for the pure initial-value problem,
 \begin{equation*}
    \begin{cases}
    u_{tt}-u_{xx}+\beta u_{xxxx}-u_{xxxxxx}=0,\quad x\in \mathbb{R},t>0,\\
    u(x,0)=p(x),u_t(x,0)=q''(x).
    \end{cases}
\end{equation*}
 This sharp Kato smoothing provides us a necessary condition for the regularities of the boundary data which also happens to be a sufficient condition for its wellposedness. Combining with the related estimates on its Cauchy problem (c.f. \cite{34}), we are able to study the IBVP,
\begin{equation*}
    \begin{cases}
    u_{tt}-u_{xx}+\beta u_{xxxx}-u_{xxxxxx}=0,\quad x>0,t>0,\\
    u(x,0)=\varphi(x),u_t(x,0)=\psi''(x),\\
    u(0,t)=0,u_{xx}(0,t)=0,u_{xxxx}(0,t)=0,
    \end{cases}
\end{equation*}
and show that for  $-\frac12<s\leq0$ and $ b $ sufficiently close to $\frac12$,
\begin{align*}
    \sup_{ t\geq 0}\|u(\cdot,t)\|_{H^s(\mathbb{R}^+)}+\sup_{x\geq 0}\|\partial_x^ju(x,\cdot)\|_{H^{\frac{s-j+1}{3}}(\R^+)}& +\|\eta(t) u(x,t) \|_{X^{s,b}} \\
   & \lesssim   \|\varphi \|_{H^s(\mathbb{R}^+)}+\|\psi\|_{H^{s-1}(\mathbb{R}^+)},
\end{align*}
 with $j=0,2,4$.
Similar argument can be applied to the forced linear problem,
\begin{equation*}
\begin{cases}
u_{tt}-u_{xx}+\beta u_{xxxx}-u_{xxxxxx}=g_{xx}(x,t),\quad x>0,t>0,\\
u(x,0)=0, u_t(x,0)=0, \\
u(0,t)=0, u_{xx}(0,t)=0, u_{xxxx}(0,t)=0,
\end{cases}
\end{equation*}
 via the Duhamel's principle. However, some of the related estimates for the forced linear problem are only valid for $b<\frac12$. A   proper  bilinear estimate for $b<\frac12$ will be needed, because of the one that established for the Cauchy problem \cite{34} only valids for $b>\frac12$.

The paper is organized as follows:  In section 2, some important notations, basic lemmas of inequalities and  existing results on the Cauchy problem of the sixth order Boussinesq equation are presented. Section 3 is devoted to consider the associated linear problems and it is divided into  two subsections.  Explicit solution  formulas for the associated linear problems will be derived in subsection 3.1.  The sharp Kato smoothing property, $X^{s,b}$ estimates and other related estimates will be provided for the linear problems in subsection 3.2.    Section 4  shows a valid bilinear estimate for $b<\frac12$. The local well-posedness will be established  in Section 5 through contraction mapping principle.

\section{Preliminary}
This section introduces some important notations and lemmas that will be used in the later study. Followings are some important lemmas of elementary calculus inequalities and the reader can refer both of the proof in \cite{95,34}.
\begin{lemma}\label{tz1}
If $\alpha,\beta\geq 0$, $\alpha+\beta>1$ and $\gamma=\min\{\alpha,\beta, \alpha+\beta-1\}$, then,
\[\int_\R \frac{1}{\langle x-a\rangle^\alpha\langle x-b\rangle^\beta}dx\lesssim \frac{1}{\langle a-b\rangle^{-\gamma}}.\]
\end{lemma}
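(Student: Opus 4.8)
The plan is to establish the estimate — with the right-hand side understood as the decaying weight $\langle a-b\rangle^{-\gamma}$ — by reducing to a one-parameter integral and then splitting the domain of integration into three regions. First I would use the translation invariance of the integral: setting $y=x-b$ and $c:=a-b$, and reflecting $y\mapsto -y$ if necessary, reduces the claim to
\[
I(c):=\int_\R\frac{dy}{\langle y-c\rangle^\alpha\langle y\rangle^\beta}\lesssim\langle c\rangle^{-\gamma},\qquad c\ge 0.
\]
For $c\lesssim 1$ the bound is immediate, since then $\langle c\rangle\sim 1$ and the hypothesis $\alpha+\beta>1$ forces the integrand to decay like $\langle y\rangle^{-(\alpha+\beta)}$ at infinity, so $I(c)<\infty$; hence I may assume $c\ge 1$ from now on.

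Next I would decompose $\R$ according to proximity to the two centers $0$ and $c$,
\[
A=\{|y|\le c/2\},\qquad B=\{|y-c|\le c/2\},\qquad C=\R\setminus(A\cup B).
\]
On $A$ one has $|y-c|\ge c/2$, hence $\langle y-c\rangle\gtrsim\langle c\rangle$ and the contribution is at most $\langle c\rangle^{-\alpha}\int_{|y|\le c/2}\langle y\rangle^{-\beta}\,dy$; symmetrically, on $B$ it is at most $\langle c\rangle^{-\beta}\int_{|z|\le c/2}\langle z\rangle^{-\alpha}\,dz$. The truncated one-dimensional integral $\int_{|z|\le c/2}\langle z\rangle^{-\rho}\,dz$ is $O(1)$ when $\rho>1$ and $O(\langle c\rangle^{1-\rho})$ when $0\le\rho<1$, so $A$ contributes $\langle c\rangle^{-\alpha}$ or $\langle c\rangle^{-(\alpha+\beta-1)}$ and $B$ the symmetric quantities; since $\alpha,\beta,\alpha+\beta-1\ge\gamma$ and $\langle c\rangle\ge 1$, both are $\lesssim\langle c\rangle^{-\gamma}$. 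On $C$ I would split once more at $|y|=2c$: for $c/2<|y|\le 2c$ both weights are comparable to $\langle c\rangle$ while the region has measure $O(c)$, giving $c\,\langle c\rangle^{-(\alpha+\beta)}\sim\langle c\rangle^{-(\alpha+\beta-1)}$, and for $|y|>2c$ one has $\langle y-c\rangle\sim\langle y\rangle$, so that $\int_{|y|>2c}\langle y\rangle^{-(\alpha+\beta)}\,dy\lesssim\langle c\rangle^{-(\alpha+\beta-1)}$ using $\alpha+\beta>1$. Summing the three regions yields $I(c)\lesssim\langle c\rangle^{-\gamma}$.

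The main obstacle is the endpoint behavior hidden in the case $\rho=1$ of the truncated integral, where $\int_{|z|\le c/2}\langle z\rangle^{-1}\,dz\sim\log\langle c\rangle$ rather than a clean power. This logarithm is harmless whenever the accompanying power is strictly larger than $\gamma$, since then $\langle c\rangle^{-\rho'}\log\langle c\rangle\lesssim\langle c\rangle^{-\gamma}$ for $\rho'>\gamma$; thus the only genuinely critical configurations are $\beta=1,\ \alpha\le 1$ and the symmetric $\alpha=1,\ \beta\le 1$, in which $I(c)$ actually carries an extra factor $\log\langle c\rangle$. In those borderline cases the clean bound holds after replacing $\gamma$ by $\gamma-\varepsilon$ for arbitrarily small $\varepsilon>0$, and since the applications to the bilinear $X^{s,b}$ estimates always leave a small margin in the exponents, this loss is immaterial. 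I expect everything else to be routine once the three-region decomposition is in place, and would simply track the constants through the elementary one-dimensional integrals above.
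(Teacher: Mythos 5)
Your proof is correct and is essentially the standard argument: the paper itself does not prove Lemma \ref{tz1} but defers to \cite{95,34}, where the same reduction to $\int_\R\langle y-c\rangle^{-\alpha}\langle y\rangle^{-\beta}\,dy$ and the same three-region decomposition (a neighborhood of each singularity plus the far region) are carried out. Your two side remarks are also accurate: the stated right-hand side $\langle a-b\rangle^{-\gamma}$ in the denominator is a typo for the decaying weight $\langle a-b\rangle^{-\gamma}$, and in the borderline case $\max\{\alpha,\beta\}=1$ the clean power bound genuinely fails by a factor $\log\langle a-b\rangle$ --- which is why \cite{95} states the estimate with an explicit correction factor --- but, as you observe, this loss is harmless in all the applications here since the exponents always leave an $\varepsilon$ of room.
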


\begin{lemma}\label{tz2}
For $l\in(\frac12,1)$,
\[\int_\R \frac{1}{\langle x\rangle^l \sqrt{|x-a|}}\lesssim \frac{1}{\langle a\rangle^{l-\frac12}}.\]
\end{lemma}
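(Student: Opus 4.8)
The plan is to split the integral according to the distance from the singular point $x=a$, handling the algebraic singularity $|x-a|^{-1/2}$ locally and reducing the remaining piece to Lemma~\ref{tz1}. First I would note that the integrand is nonnegative and the integral converges: near $x=a$ it behaves like $|x-a|^{-1/2}$, which is integrable since $\frac12<1$, while at infinity it decays like $|x|^{-l-\frac12}$ with $l+\frac12>1$, so $I(a):=\int_\R \langle x\rangle^{-l}|x-a|^{-1/2}\,dx$ is finite for every $a$. I would then write $I(a)=I_{\mathrm{loc}}(a)+I_{\mathrm{far}}(a)$, where $I_{\mathrm{loc}}$ integrates over $\{|x-a|\le 1\}$ and $I_{\mathrm{far}}$ over $\{|x-a|>1\}$.

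For the local piece, the key observation is that $\langle\cdot\rangle$ is $1$-Lipschitz, so on $\{|x-a|\le 1\}$ one has $\langle x\rangle\ge\langle a\rangle-1$; distinguishing the cases $\langle a\rangle\ge 2$ and $\langle a\rangle<2$ (and using $\langle x\rangle\ge 1$ in the latter) yields $\langle x\rangle^{-l}\lesssim\langle a\rangle^{-l}$ uniformly in $a$. Hence
\[I_{\mathrm{loc}}(a)\lesssim \langle a\rangle^{-l}\int_{|x-a|\le 1}\frac{dx}{\sqrt{|x-a|}}=\langle a\rangle^{-l}\int_{-1}^{1}\frac{dy}{\sqrt{|y|}}\lesssim \langle a\rangle^{-l}\le \langle a\rangle^{\frac12-l},\]
the last inequality holding because $\langle a\rangle\ge 1$ and $-l\le \frac12-l$.

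For the far piece, I would use that $1+(x-a)^2\le 2(x-a)^2$ whenever $|x-a|\ge 1$, so that $|x-a|^{-1/2}\lesssim\langle x-a\rangle^{-1/2}$. This replaces the singular weight by the smooth weight $\langle x-a\rangle^{-1/2}$ and allows me to invoke Lemma~\ref{tz1} with $\alpha=l$, $\beta=\frac12$ and centers $0$ and $a$. Since $l\in(\frac12,1)$ gives $\alpha+\beta=l+\frac12>1$ and $\gamma=\min\{l,\tfrac12,l-\tfrac12\}=l-\tfrac12$, Lemma~\ref{tz1} produces $I_{\mathrm{far}}(a)\lesssim\langle a\rangle^{-(l-1/2)}=\langle a\rangle^{\frac12-l}$. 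Adding the two bounds gives the claimed estimate.

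I do not expect a genuine obstacle here, since this is a routine calculus inequality; the only point requiring care is the local estimate, where $\langle x\rangle^{-l}\lesssim\langle a\rangle^{-l}$ must be shown to degenerate gracefully as $a\to 0$, which is exactly why I separate the cases $\langle a\rangle\ge 2$ and $\langle a\rangle<2$. An alternative route avoids Lemma~\ref{tz1} entirely: for $a>0$ (using the symmetry $I(a)=I(-a)$ obtained from $x\mapsto -x$) one decomposes $\R$ into the four regions $x<0$, $0\le x<a/2$, $a/2\le x\le 3a/2$, and $x>3a/2$, and estimates each by elementary bounds on $\langle x\rangle$ and $|x-a|$; each region again contributes $O(\langle a\rangle^{\frac12-l})$, with the singular region $a/2\le x\le 3a/2$ producing the dominant $\langle a\rangle^{-l}\sqrt{a}$ term.
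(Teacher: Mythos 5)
Your proof is correct. Note that the paper itself does not prove this lemma; it defers both Lemma \ref{tz1} and Lemma \ref{tz2} to the references \cite{95,34}, and your argument is essentially the standard one found there: isolate the integrable singularity on $\{|x-a|\le 1\}$, where $\langle x\rangle^{-l}\lesssim\langle a\rangle^{-l}$ by the Lipschitz property of $\langle\cdot\rangle$, and on the complement replace $|x-a|^{-1/2}$ by $\langle x-a\rangle^{-1/2}$ so that Lemma \ref{tz1} applies with $\gamma=l-\tfrac12$. One small point worth recording: as printed, Lemma \ref{tz1} reads $\lesssim\langle a-b\rangle^{-\gamma}$ in the \emph{denominator}, i.e.\ $\langle a-b\rangle^{+\gamma}$, which is evidently a sign typo (the bound would grow); you have silently used the intended decaying form $\langle a-b\rangle^{-\gamma}$, which is the correct statement and the one needed here.
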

In addition, the Young's inequality will be used multiple times in the proof.
\begin{lemma}\label{young}
For given $f\in L^p(\R)$ and $g\in L^q(\R)$  with
\[\frac{1}{p}+\frac{1}{q}=1+\frac{1}{r}, \quad 1\leq p,q,r \leq \infty,\]
then
\[\|f*g\|_{L^r(\R)}\leq \|f\|_{L^p(\R)}\|g\|_{L^q(\R)}.\]
\end{lemma}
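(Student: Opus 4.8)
The plan is to prove this directly from the generalized (three-factor) Hölder inequality, splitting the convolution integrand into three carefully chosen powers; this keeps the argument self-contained and avoids the heavier Riesz--Thorin interpolation route. First I would reduce to the case $f,g\geq 0$, since replacing $f,g$ by $|f|,|g|$ only enlarges $|(f*g)(x)|\leq \int |f(x-y)||g(y)|\,dy$, and I would record the translation invariance $\int |f(x-y)|^p\,dy=\|f\|_{L^p(\R)}^p$, which will be used repeatedly. I would then dispose of the degenerate exponents separately: when $r=\infty$ the hypothesis becomes $\frac1p+\frac1q=1$ and the claim is the ordinary Hölder inequality applied in $y$; and since the constraint forces $p\leq r$ and $q\leq r$, the boundary cases $p=r$ (which entails $q=1$) and $q=r$ (which entails $p=1$) follow from Minkowski's integral inequality, i.e.\ convolution with an $L^1$ kernel.

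For the generic case $1<p,q,r<\infty$ with $p,q<r$, I would introduce auxiliary exponents $a,b$ by
\[ \frac1a=\frac1p-\frac1r, \qquad \frac1b=\frac1q-\frac1r, \]
noting that $a,b\in(1,\infty)$ and that the hypothesis $\frac1p+\frac1q=1+\frac1r$ yields exactly $\frac1r+\frac1a+\frac1b=\frac1p+\frac1q-\frac1r=1$. I would then factor the integrand as
\[ f(x-y)\,g(y)=\big(f(x-y)^p g(y)^q\big)^{1/r}\cdot f(x-y)^{p/a}\cdot g(y)^{q/b}, \]
checking that the total power of $f(x-y)$ is $\frac{p}{r}+\frac{p}{a}=1$ and likewise $\frac{q}{r}+\frac{q}{b}=1$ for $g(y)$. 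Applying Hölder with the triple $(r,a,b)$ and using translation invariance to evaluate the last two factors gives the pointwise bound
\[ (f*g)(x)\leq \Big(\int f(x-y)^p g(y)^q\,dy\Big)^{1/r}\,\|f\|_{L^p(\R)}^{p/a}\,\|g\|_{L^q(\R)}^{q/b}. \]

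Finally I would raise this inequality to the $r$-th power, integrate in $x$, and apply Tonelli's theorem to obtain $\int\!\!\int f(x-y)^p g(y)^q\,dy\,dx=\|f\|_{L^p(\R)}^p\,\|g\|_{L^q(\R)}^q$. Collecting exponents via $\frac{p}{a}r+p=(r-p)+p=r$ and $\frac{q}{b}r+q=(r-q)+q=r$ then produces $\|f*g\|_{L^r(\R)}^r\leq \|f\|_{L^p(\R)}^r\,\|g\|_{L^q(\R)}^r$, and taking the $r$-th root is the assertion. The only real obstacle is the exponent bookkeeping together with the clean separation of the endpoint cases; once the three-factor decomposition is chosen so that the powers line up, the estimate is forced by Hölder and Tonelli with no further ingredients.
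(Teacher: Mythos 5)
Your proof is correct. Note, however, that the paper offers no proof of this lemma at all: Young's inequality is stated as a classical fact to be used later (the proofs of the neighboring calculus lemmas are deferred to the cited references), so there is no in-paper argument to compare against. Your argument is the standard self-contained proof via the three-exponent H\"older inequality, and the details check out: with $\frac1a=\frac1p-\frac1r$ and $\frac1b=\frac1q-\frac1r$ one indeed has $\frac1r+\frac1a+\frac1b=1$, the powers of $f(x-y)$ and $g(y)$ in your factorization each sum to $1$, and the final exponent count $\frac{p}{a}r+p=r$, $\frac{q}{b}r+q=r$ is right. Your case analysis is also exhaustive, which is the one place such proofs usually leak: the scaling relation forces $p\leq r$ and $q\leq r$, and moreover $p=1$ if and only if $q=r$ (likewise $q=1$ iff $p=r$), so the Minkowski endpoint cases together with the H\"older case $r=\infty$ leave exactly the generic regime $1<p,q<r<\infty$, where $a,b\in(1,\infty)$ are well defined and Tonelli applies to the nonnegative integrand. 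Nothing is missing.
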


Next, the  lemma below comes from \cite{34}.
\begin{lemma}\label{equ}
There exists a $c>0$ such that
$$
\frac{1}{c}\leq \sup_{x,y\geq 0} \frac{1+|x-\sqrt{y^3}-\frac{\beta}{2}\sqrt{y}|}{1+|x-\sqrt{y^3+\beta y^2+y}|}\leq c,\ \ \ \beta=\pm1.
$$
\end{lemma}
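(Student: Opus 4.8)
The plan is to reduce this two-variable statement to a one-variable boundedness claim. Writing $\phi_1(y)=\sqrt{y^3}+\frac{\beta}{2}\sqrt{y}$ and $\phi_2(y)=\sqrt{y^3+\beta y^2+y}$, I note that the expression to be estimated depends on $x$ only through $x-\phi_1(y)$ and $x-\phi_2(y)$. For any reals $a,b$ the triangle inequality gives $1+|x-a|\le 1+|x-b|+|a-b|\le (1+|a-b|)(1+|x-b|)$ uniformly in $x$, so it suffices to produce a constant $M$ with $|\phi_2(y)-\phi_1(y)|\le M$ for all $y\ge 0$. Applying this with $(a,b)=(\phi_1,\phi_2)$ and then with $(a,b)=(\phi_2,\phi_1)$ yields the pointwise two-sided bound $\frac{1}{1+M}\le \frac{1+|x-\phi_1(y)|}{1+|x-\phi_2(y)|}\le 1+M$ for all $x,y\ge 0$, which is the genuine content of the lemma; taking the supremum (and using that the ratio tends to $1$ as $x\to\infty$ for the lower bound) then gives the stated inequality with $c=1+M$.

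First I would check that $\phi_2$ is well-defined and continuous on $[0,\infty)$. Factoring $y^3+\beta y^2+y=y(y^2+\beta y+1)$ and observing that $y^2+\beta y+1$ has discriminant $\beta^2-4=-3<0$ for $\beta=\pm1$, the radicand is nonnegative for every $y\ge 0$, so $\phi_2$ is continuous (and smooth on $(0,\infty)$) and nonnegative. Hence $f(y):=\phi_2(y)-\phi_1(y)$ is continuous on $[0,\infty)$, and everything reduces to controlling $f$ at the two ends of the half-line.

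Next I would analyze the two limits. As $y\to 0^+$, writing $\phi_2(y)=\sqrt{y}\,\sqrt{y^2+\beta y+1}\sim\sqrt{y}\to 0$ and noting $\phi_1(y)\to 0$ gives $f(y)\to 0$. As $y\to\infty$, I would Taylor expand $\phi_2(y)=y^{3/2}(1+\beta y^{-1}+y^{-2})^{1/2}=y^{3/2}+\frac{\beta}{2}y^{1/2}+\bigl(\tfrac12-\tfrac{\beta^2}{8}\bigr)y^{-1/2}+O(y^{-3/2})$. The precise choice of $\phi_1$ is designed exactly so that its two terms $\sqrt{y^3}+\frac{\beta}{2}\sqrt{y}$ cancel the first two terms of this expansion, leaving $f(y)=\frac{3}{8}y^{-1/2}+O(y^{-3/2})\to 0$ (using $\beta^2=1$). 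Thus $f$ is continuous on $[0,\infty)$ with limit $0$ at both endpoints, so it is bounded; set $M:=\sup_{y\ge 0}|f(y)|<\infty$.

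Combining this uniform bound with the triangle-inequality comparison of the first step completes the argument as described. I expect the only genuinely delicate point to be the asymptotic expansion as $y\to\infty$: one must verify that the $y^{3/2}$ and $y^{1/2}$ coefficients cancel exactly, which is precisely what forces the $\frac{\beta}{2}\sqrt{y}$ correction in the definition of $\phi_1$ and makes $f$ decay rather than grow at infinity. The small-$y$ behavior and the final packaging via the triangle inequality are routine.
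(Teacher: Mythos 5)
Your proof is correct, and the paper itself offers no argument for this lemma (it is simply quoted from the reference of Esfahani--Farah), so there is nothing to diverge from: your route --- reducing via the elementary inequality $1+|x-a|\le(1+|a-b|)(1+|x-b|)$ to the boundedness of $\sqrt{y^3+\beta y^2+y}-\sqrt{y^3}-\frac{\beta}{2}\sqrt{y}$ on $[0,\infty)$, checked by continuity plus the expansion $y^{3/2}(1+\beta y^{-1}+y^{-2})^{1/2}=y^{3/2}+\frac{\beta}{2}y^{1/2}+\frac38 y^{-1/2}+O(y^{-3/2})$ --- is exactly the standard one behind this equivalence of Bourgain norms. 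The computation of the cancellation of the $y^{3/2}$ and $y^{1/2}$ terms is the only substantive point, and you have it right.
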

 This  can lead to a equivalence between the norm of the Bourgain-type space for the sixth order Bousssinesq and the one for the KdV-type equation (c.f. \cite{54,53,52,50}), that is,
\[\|u\|_{X^{s,b}(\R^2)}\sim \|\langle \xi\rangle^s\langle |\tau|-|\xi|^3-\frac{\beta}{2} |\xi|\rangle^b\widehat{u}(\xi,\tau)\|_{L^2_{\xi,\tau}(\R^2)}.\]

For convenience,   $\eta$ denotes a cut-off function all through this article satisfying $\eta\in C^\infty (\R)$ with
\[\eta=\begin{cases}
1,\quad x\in [-1,1],\\
0, \quad x<-2 \mbox{ or }x>2.
\end{cases}\]
In addition, $\eta_T(t):=\eta(t/T)$ for $0<T<1$. The following lemma is introduced in \cite{95} and will be useful in the later proof.

\begin{lemma}\label{a}
For any $-\frac12<b'<b<\frac12$, $\|\eta_T(t)u\|_{X^{s,b}}\lesssim T^{b'-b}\|u\|_{X^{s,b'}}$.
\end{lemma}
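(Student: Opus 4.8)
The plan is to reduce the two-dimensional weighted estimate to a one-dimensional estimate in the time variable, and then prove the latter by playing a uniform multiplier bound against a gain coming from the short temporal support of $\eta_T$. First I would discard the spatial weight: since $\eta_T$ depends only on $t$, multiplication by $\eta_T(t)$ commutes with the Fourier multiplier $\langle\xi\rangle^s$, so it suffices to treat $s=0$ and reinstate $\langle\xi\rangle^s$, integrating in $\xi$, at the very end. To linearize the weight $\langle|\tau|-\phi(\xi)\rangle$ I would invoke the equivalence of Lemma \ref{equ} to replace $\phi(\xi)$ by $|\xi|^3+\tfrac{\beta}{2}|\xi|$, split $u=u_++u_-$ into its positive and negative time-frequency parts, and on each piece untwist by the linear group: writing $\mathcal U u(x,t)=W(-t)u(\cdot,t)$, where $W$ is the propagator with symbol $e^{it(|\xi|^3+\frac{\beta}{2}|\xi|)}$, turns the modulation weight into $\langle\tau\rangle$, and because $W(-t)$ is linear while $\eta_T(t)$ is a scalar at each fixed $t$ one has $\mathcal U(\eta_T u)=\eta_T\,\mathcal U u$. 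The problem thus collapses, for each fixed $\xi$, to the scalar statement
\[
\|\eta_T f\|_{H^b(\R)}\ \lesssim\ T^{b'-b}\,\|f\|_{H^{b'}(\R)},\qquad -\tfrac12<b'<b<\tfrac12,
\]
after which I would integrate against $\langle\xi\rangle^{2s}\,d\xi$.

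For the scalar estimate I would isolate two ingredients. The first is that, for every exponent $a$ with $|a|<\tfrac12$, multiplication by $\eta_T$ is bounded on $H^a(\R)$ with a constant independent of $T\in(0,1)$; this follows from the $L^2$ bound together with the difference-quotient characterization of $H^a$, the constant staying uniform in $T$ precisely because $a<\tfrac12$ — the same threshold at which characteristic functions are admissible $H^a$-multipliers. The second ingredient supplies the power of $T$: for $0\le a<\tfrac12$,
\[
\|\eta_T f\|_{L^2(\R)}\ \lesssim\ T^{a}\,\|f\|_{H^{a}(\R)},
\]
which I would get by Hölder in time, using $\|\eta_T\|_{L^{1/a}}\sim T^{a}$, followed by the Sobolev embedding $H^{a}(\R)\hookrightarrow L^{2/(1-2a)}(\R)$. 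With these two endpoints in hand — a bound that transports the modulation exponent with no gain, and a bound that drops to $L^2$ while extracting a full power of $T$ from the temporal localization — interpolation in the modulation exponent, together with duality to reach the negative exponents, distributes the $T$-gain and yields the asserted bound with the factor $T^{b'-b}$ throughout the range $-\tfrac12<b',b<\tfrac12$.

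The main obstacle, and the reason for the strict inequalities, is the endpoint behaviour as $b\uparrow\tfrac12$ or $b'\downarrow-\tfrac12$: both the uniform $H^a$-multiplier bound and the Sobolev embedding used for the gain degenerate at regularity $\tfrac12$, so one must keep the exponents strictly inside $(-\tfrac12,\tfrac12)$ and check that every implied constant depends only on the distances of $b,b'$ to $\pm\tfrac12$ (and on $\eta$), never on $T$. A secondary technical point is the non-smoothness of $|\tau|$ in the definition of the weight, which is exactly what forces the $u_\pm$ decomposition; once that is in place the untwisting is exact and introduces no error terms. Finally, because the cut-off commutes with both the group and the spatial multiplier $\langle\xi\rangle^s$, the whole argument is uniform in $s$ and loses no spatial regularity, as the statement requires.
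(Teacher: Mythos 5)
Your overall architecture (reduce to $s=0$, split $u=u_\pm$ by the sign of $\tau$, untwist by the group, prove a one–dimensional temporal estimate from a uniform $H^a$ multiplier bound plus a H\"older–Sobolev bound, then interpolate and dualize) is the standard proof of this kind of lemma — the paper itself gives no proof, importing the statement from \cite{95}. But there is a direction error, inherited from the statement as printed, that your final assembly cannot repair. Your scalar target $\|\eta_T f\|_{H^b}\lesssim T^{b'-b}\|f\|_{H^{b'}}$ with $b'<b$ puts the \emph{larger} modulation exponent on the left, and in that direction the inequality is false: take $f(t)=e^{i\Lambda t}\psi(t/T)$ with $\psi$ a bump supported in $[-1,1]$ and $\Lambda\gg T^{-1}$, so that $\eta_T f=f$, $\|f\|_{H^{b'}}\sim \Lambda^{b'}T^{1/2}$ and $\|\eta_T f\|_{H^{b}}\sim \Lambda^{b}T^{1/2}$; the claimed bound would force $(\Lambda T)^{b-b'}\lesssim 1$, which fails as $\Lambda T\to\infty$. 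Consistently with this, neither of your two ingredients, nor anything obtained from them by interpolation or duality, can produce such a bound: the uniform multiplier estimate has equal exponents, the H\"older–Sobolev estimate $\|\eta_T f\|_{L^2}\lesssim T^a\|f\|_{H^a}$ \emph{lowers} the exponent, and interpolation and duality both preserve the property that the left-hand exponent does not exceed the right-hand one. The true statement — the one proved in \cite{95} and the one the paper actually uses in Section 5 (e.g. $\|\eta_T u\|_{X^{s,a_2}}\lesssim T^{b-a_2}\|u\|_{X^{s,b}}$ with $a_2<b$) — has $b$ and $b'$ interchanged: for $-\frac12<b'<b<\frac12$, $\|\eta_T u\|_{X^{s,b'}}\lesssim T^{b-b'}\|u\|_{X^{s,b}}$. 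Lemma \ref{a} as printed is a misstatement, and a blind proof attempt should have detected this rather than claimed the ingredients deliver $T^{b'-b}$.

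Once the direction is corrected, your two ingredients do the job exactly as you envisage: for $0\le b'\le b$ interpolate $\|\eta_T f\|_{H^b}\lesssim\|f\|_{H^b}$ against $\|\eta_T f\|_{L^2}\lesssim T^{b}\|f\|_{H^{b}}$; the case $b'\le b\le 0$ follows by duality; and the mixed case $b'<0<b$ follows by writing $\eta_T=\eta_T\,\tilde\eta_T$ with a fattened cutoff and composing the two previous cases through the exponent $0$. One further point you should make explicit: because the weight here is the two-branch $\langle|\tau|-\phi(\xi)\rangle$, after multiplication by $\eta_T$ the pieces $u_\pm$ are no longer Fourier-supported in $\{\pm\tau\ge 0\}$, so the untwisting is not ``exact'' as you assert. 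One needs the pointwise comparison $\langle|\tau|-\phi(\xi)\rangle\le\langle\tau\mp\phi(\xi)\rangle$ (valid since $\phi\ge0$), which transfers the two-branch norm to the one-branch norm only when the exponent sitting on the localized function is nonnegative; in the corrected statement that exponent is $b'$, which may be negative, so this case must be routed through the duality step rather than handled directly, while on the right-hand side the weights agree exactly on the support of $\widehat{u_\pm}$ and $|\widehat{u_\pm}|\le|\widehat{u}|$ closes the estimate.
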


Finally, some of the important  notations and previous results on the Cauchy problem of the sixth order Boussinesq equation are provided below. Let us denote  $u=[W_\R(f_1,f_2)](x,t)$ as the solution to the linear problem,
\begin{equation}\label{linear1}
\begin{cases}
u_{tt}-u_{xx}+\beta u_{xxxx}-u_{xxxxxx}=0, \ \ \ x\in\R, t>0,\\
u(x,0)=f_1(x),u_t(x,0)=f_2''(x).
\end{cases}
\end{equation}
and write
$
    [W_{\R}(f_1,f_2)](x,t):=[V_1(f_1)](x,t)+[V_2(f_2)](x,t)
$
with
\begin{equation}\label{v01}
   [ V_1(f_1)](x,t):=\frac{1}{2}\int_{\R} \left(e^{i(t\phi(\xi)+x\xi)}+e^{{i(-t\phi(\xi)+x\xi)}}\right)\widehat{f}_1(\xi)d\xi,
\end{equation}
and
\begin{equation}\label{v02}
   [ V_2(f_2)](x,t):=\frac{1}{2i}\int_{\R } \left(e^{i(t\phi(\xi)+x\xi)}-e^{i(-t\phi(\xi)+x\xi)}\right)\frac{\xi^2\widehat{f}_2(\xi)}{\phi(\xi)}d\xi,
\end{equation}
The estimates below  comes from \cite{34,35,96}.
\begin{lemma}\label{R}
Given $f_1 \in H^s(\R)$ and $f_2\in H^{s-1}(\R)$, for any $s$ and $b$, the solution $u$ of the IVP (\ref{linear1}) satisfies
\begin{align*}
\sup_{t\geq 0} \|u(\cdot,t)\|_{H^s(\R)}&\lesssim \|f_1\|_{H^s(\R)}+\|f_2\|_{H^{s-1}(\R)},\\
 \sup_{x\geq 0} \| \partial ^j_x u(x,\cdot)\|_{H^{\frac{s-j+1}{3}}(\mathbb{R})}&\lesssim\|f_1 \|_{H^s(\mathbb{R})}+\|f_2 \|_{H^{s-1}(\mathbb{R})},\\
   \|\eta(t)u(x,t)\|_{X^{s,b}(\R^2)}&\lesssim \|f_1\|_{H^s(\R)}+\|f_2\|_{H^{s-1}(\R)},
\end{align*}
for $j=0,1,2,..,5$.
\end{lemma}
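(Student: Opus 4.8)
\emph{Proof proposal.} The plan is to derive all three bounds directly from the explicit representation $u=V_1(f_1)+V_2(f_2)$ in \eqref{v01}--\eqref{v02}, reducing each to a one-dimensional Fourier multiplier or change-of-variables estimate. A recurring ingredient is the multiplier $m(\xi):=\xi^2/\phi(\xi)=|\xi|/\sqrt{\xi^4+\beta\xi^2+1}$, which satisfies $\langle\xi\rangle\,m(\xi)\lesssim 1$: indeed $\xi^4+\beta\xi^2+1>0$ since its discriminant in $\xi^2$ is $\beta^2-4<0$, and one checks the limits $|\xi|\to 0$ (where $m(\xi)\sim|\xi|$) and $|\xi|\to\infty$ (where $m(\xi)\sim 1/|\xi|$). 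Note also that the $x$-variable enters $V_1,V_2$ only through the unimodular factor $e^{ix\xi}$, so every bound below will be automatically uniform in $x$, giving the suprema in $x$ for free.

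For the first (energy) estimate, taking the spatial Fourier transform of \eqref{v01}--\eqref{v02} gives $\widehat{u}(\xi,t)=\cos(t\phi(\xi))\widehat{f_1}(\xi)+\sin(t\phi(\xi))\,m(\xi)\,\widehat{f_2}(\xi)$. Since $|\cos|,|\sin|\le1$ and $\langle\xi\rangle m(\xi)\lesssim1$, Plancherel yields $\|u(\cdot,t)\|_{H^s}\lesssim\|f_1\|_{H^s}+\|f_2\|_{H^{s-1}}$ uniformly in $t$. For the third ($X^{s,b}$) estimate, I take the full space-time Fourier transform of $\eta(t)u$, which equals a constant times $[\widehat\eta(\tau-\phi(\xi))\pm\widehat\eta(\tau+\phi(\xi))]$ multiplied by $\widehat{f_1}(\xi)$ (for $V_1$) resp. $m(\xi)\widehat{f_2}(\xi)$ (for $V_2$). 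Because $\phi(\xi)\ge0$, one has the elementary pointwise bounds $\langle|\tau|-\phi(\xi)\rangle\le\langle\tau-\phi(\xi)\rangle$ and $\langle|\tau|-\phi(\xi)\rangle\le\langle\tau+\phi(\xi)\rangle$, so the modulation weight is absorbed into $\widehat\eta$ and, using $\int_\R\langle\sigma\rangle^{2b}|\widehat\eta(\sigma)|^2\,d\sigma=\|\eta\|_{H^b}^2<\infty$ (finite for every $b$ since $\eta\in C_c^\infty$), together with $\langle\xi\rangle m(\xi)\lesssim1$, the bound follows with constant $\|\eta\|_{H^b}$.

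The main work is the sharp Kato smoothing estimate. Since $\phi$ is even with $\frac{d}{d\xi}\phi(\xi)^2=2\xi(3\xi^4+2\beta\xi^2+1)$ and $3t^2+2\beta t+1>0$ for $t\ge0$ (discriminant $4\beta^2-12<0$), the map $\phi$ is a strictly increasing diffeomorphism of $(0,\infty)$ onto itself. Splitting each exponential $e^{\pm it\phi(\xi)}$ over $\{\xi>0\}$ and $\{\xi<0\}$ and substituting $\tau=\pm\phi(\xi)$ with Jacobian $d\tau=\phi'(\xi)\,d\xi$ identifies the time Fourier transform of $\partial_x^j u(x,\cdot)$, up to the unimodular factor $e^{ix\xi}$, and leads to
\[
\|\partial_x^j u(x,\cdot)\|_{H_t^{\frac{s-j+1}{3}}}^2\lesssim\int_\R \frac{\langle\phi(\xi)\rangle^{\frac{2(s-j+1)}{3}}\,|\xi|^{2j}}{\phi'(|\xi|)}\Big(|\widehat{f_1}(\xi)|^2+m(\xi)^2\,|\widehat{f_2}(\xi)|^2\Big)\,d\xi .
\]
It then remains to verify the symbol equivalences $\frac{\langle\phi(\xi)\rangle^{2(s-j+1)/3}|\xi|^{2j}}{\phi'(|\xi|)}\lesssim\langle\xi\rangle^{2s}$ and, with the extra factor $m(\xi)^2$, $\lesssim\langle\xi\rangle^{2(s-1)}$, uniformly for $j=0,\dots,5$. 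Using $\phi(\xi)\sim|\xi|^3$, $\phi'(\xi)\sim|\xi|^2$ as $|\xi|\to\infty$ and $\phi(\xi)\sim|\xi|$, $\phi'(\xi)\to1$ as $\xi\to0$, both ratios reduce to $\langle\xi\rangle^{2s}$ resp. $\langle\xi\rangle^{2(s-1)}$ for large $\xi$ and vanish harmlessly for small $\xi$. The delicate point, and the step I expect to require the most care, is controlling the Jacobian $1/\phi'$ uniformly near $\xi=0$: one must confirm that $\phi'$ stays bounded away from $0$ on compact neighborhoods of the origin so that the substitution produces no spurious singularity, and this is precisely where the exact form of $\phi$, not merely its asymptotics, is used. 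Once these symbol bounds are in hand, integrating over $\xi$ gives the desired control by $\|f_1\|_{H^s}^2+\|f_2\|_{H^{s-1}}^2$, uniformly in $x$.
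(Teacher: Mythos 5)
Your proposal is correct. Note first that the paper itself offers no proof of Lemma \ref{R}: it is quoted verbatim from \cite{34,35,96}, so there is nothing internal to compare against line by line. Your argument is essentially the standard one underlying those references: the energy bound via Plancherel applied to $\widehat{u}(\xi,t)=\cos(t\phi(\xi))\widehat{f_1}(\xi)+\sin(t\phi(\xi))\,m(\xi)\widehat{f_2}(\xi)$ together with $\langle\xi\rangle\,m(\xi)\lesssim 1$; the $X^{s,b}$ bound by writing the space--time transform of $\eta u$ in terms of $\widehat\eta(\tau\mp\phi(\xi))$ and using $\langle|\tau|-\phi(\xi)\rangle\le\min\bigl(\langle\tau-\phi(\xi)\rangle,\langle\tau+\phi(\xi)\rangle\bigr)$; and the sharp Kato smoothing by the change of variables $\tau=\pm\phi(\xi)$, which is legitimate because $\phi'(\xi)=(3\xi^4+2\beta\xi^2+1)/\sqrt{\xi^4+\beta\xi^2+1}\sim\langle\xi\rangle^{2}$ is bounded below on all of $\R$ (both quartics have negative discriminant for $\beta=\pm1$), so the Jacobian creates no singularity at the origin and the symbol ratio reduces to $\langle\xi\rangle^{2s}$ (resp.\ $\langle\xi\rangle^{2(s-1)}$ with the extra $m(\xi)^2$) exactly as you compute. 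Two cosmetic points: for $b<0$ the pointwise inequality reverses when raised to the power $2b$, so there one should simply bound $\langle|\tau|-\phi(\xi)\rangle^{2b}\le 1$ and use $\|\eta\|_{L^2}$ (equivalently, replace $\|\eta\|_{H^b}$ by $\|\eta\|_{H^{|b|}}$ throughout); and in the Kato step the two branches $\xi\gtrless 0$ and the two phases $e^{\pm it\phi}$ can land on the same $\tau$, which costs only a harmless factor from the triangle inequality. Neither affects the validity of the proof.
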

According to the Duhamel's principal, the  IVP for the forced linear equation,
 \begin{equation*}
\begin{cases}
u_{tt}-u_{xx}+\beta u_{xxxx}-u_{xxxxxx}=g_{xx}(x,t),\quad x\in \mathbb{R}\mbox{, }t>0,\\
u(x,0)=0, u_t(x,0)=0,
\end{cases}
\end{equation*}
has its solution $u$ in the form
\[u(x,t)=\int^t_0 [W_\R(0,g)](x,t-\tau)d\tau.\]
The following lemma comes from \cite{34}.
\begin{lemma}\label{f1}
Let $-\frac12< b'\leq 0 \leq b\leq b'+1$, then for any $s$,
\begin{align*}
 \left\|\eta_T(t)\int^t_0 [W_\R(0,g)](x,t-\tau)d\tau\right\|_{X^{s,b}(\R^2)}\lesssim T^{1-b+b'}\left\|\left(\frac{\xi^2\widehat{g}(\xi,\tau)}{2i\phi(\xi)}\right)^\vee\right\|_{X^{s,b'}(\R^2)},
\end{align*}
where $\phi(\xi)=\sqrt{\xi^6+\beta \xi^4+\xi^2}$, `` $\wedge $" and `` $\vee$" denote the  Fourier transform  and inverse Fourier transform in both time and space.
\end{lemma}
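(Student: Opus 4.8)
The plan is to reduce the estimate to a single scalar Duhamel bound in the time variable, following the standard Bourgain-space framework for dispersive Duhamel terms (cf. \cite{34,50}). First I would unwind the solution operator. By \eqref{v02}, the integrand $[W_\R(0,g)](x,t-\tau)=[V_2(g(\cdot,\tau))](x,t-\tau)$ splits into the two half-wave propagators $e^{\pm i(t-\tau)\phi(\xi)}$; writing $F:=\big(\tfrac{\xi^2\widehat g(\xi,\tau)}{2i\phi(\xi)}\big)^\vee$ for the space-time function on the right-hand side of the lemma, the Duhamel term becomes $u_+-u_-$, where
\[
u_\pm(x,t)=\eta_T(t)\int_0^t e^{\pm i(t-\tau)\phi(D)}F(\cdot,\tau)\,d\tau
\]
and $\phi(D)$ is the Fourier multiplier with symbol $\phi(\xi)$. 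I would estimate $\|u_+\|_{X^{s,b}}$ and $\|u_-\|_{X^{s,b}}$ separately and add.

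The passage between the nonsmooth weight $\langle|\tau|-\phi(\xi)\rangle$ defining $X^{s,b}$ and the smooth branch weights $\langle\tau\mp\phi(\xi)\rangle$ is handled cleanly by the elementary inequalities $\big||\tau|-\phi(\xi)\big|\le|\tau\mp\phi(\xi)|$ together with the sign hypotheses $b\ge0\ge b'$: on the left these give $\langle|\tau|-\phi(\xi)\rangle^{b}\le\langle\tau\mp\phi(\xi)\rangle^{b}$, while on the right they give $\langle\tau\mp\phi(\xi)\rangle^{b'}\le\langle|\tau|-\phi(\xi)\rangle^{b'}$. Thus it suffices to bound each $u_\pm$ in the branch-adapted norm $\|\langle\xi\rangle^{s}\langle\tau\mp\phi(\xi)\rangle^{b}\,\widehat{u_\pm}\|_{L^2_{\xi,\tau}}$ by $T^{1-b+b'}$ times the branch-adapted norm of $F$. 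For the ``$+$'' branch I would take the spatial Fourier transform and conjugate by $e^{-it\phi(\xi)}$; since multiplication by $e^{-it\phi(\xi)}$ shifts the temporal frequency by $\phi(\xi)$, this converts the weight $\langle\tau-\phi(\xi)\rangle$ into the flat weight $\langle\tau\rangle$ and reduces the matter, for each fixed $\xi$, to the one-dimensional estimate
\[
\Big\|\eta_T(t)\int_0^t h(\tau)\,d\tau\Big\|_{H^b_t(\R)}\lesssim T^{1-b+b'}\,\|h\|_{H^{b'}_t(\R)},\qquad -\tfrac12<b'\le0\le b\le b'+1,
\]
with $h(\tau)=e^{-i\tau\phi(\xi)}\widehat F(\xi,\tau)$. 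Multiplying by $\langle\xi\rangle^{2s}$, integrating in $\xi$, and applying Plancherel in time then recovers $\|F\|_{X^{s,b'}}$ on the right; the ``$-$'' branch is identical with the conjugation $e^{+it\phi(\xi)}$.

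The core is therefore the scalar estimate, which I would prove by expanding $\int_0^t h(\tau)\,d\tau=\int_\R \widehat h(\lambda)\,\frac{e^{it\lambda}-1}{i\lambda}\,d\lambda$ and splitting the $\lambda$-integral at $|\lambda|T\sim1$. On $\{|\lambda|T\le1\}$ I would Taylor expand $\frac{e^{it\lambda}-1}{i\lambda}=\sum_{k\ge1}\frac{(it)^{k}\lambda^{k-1}}{k!}$, so that the contribution is a rapidly converging sum of terms $\frac{i^k}{k!}\,\eta_T(t)t^{k}\int_{|\lambda|\le1/T}\lambda^{k-1}\widehat h(\lambda)\,d\lambda$; here $\|\eta_T(t)t^{k}\|_{H^b_t}\lesssim T^{k+\frac12-b}$, while Cauchy-Schwarz bounds the $\lambda$-integral by $T^{-k+\frac12+b'}\|h\|_{H^{b'}}$, and the two powers combine to the claimed $T^{1-b+b'}$ for every $k$. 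On $\{|\lambda|T>1\}$ I would write $e^{it\lambda}-1$ as $e^{it\lambda}$ minus $1$, giving a modulated term $\eta_T(t)e^{it\lambda}$ and a constant-in-$t$ term $\eta_T(t)$, each carrying the gain $\langle\lambda\rangle^{-1}$; here the hypothesis $b'>-\tfrac12$ guarantees that the resulting $\lambda$-integrals converge and are controlled by their lower endpoint $|\lambda|\sim1/T$, again producing $T^{1-b+b'}$ after using $b\le b'+1$.

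The main obstacle is the sharp bookkeeping of the $T$-power in the scalar estimate: obtaining exactly $T^{1-b+b'}$ — rather than a lossy exponent — requires that the gain $\langle\lambda\rangle^{-1}$ from the time integration, the $T$-scaling of the cutoff $\eta_T$, and the restriction $|\lambda|\gtrsim1/T$ balance precisely, and this balance holds only because of the full force of the constraints $-\tfrac12<b'\le0\le b\le b'+1$. (In particular $b'>-\tfrac12$ is exactly the integrability threshold for the high-$\lambda$ piece, and $b\le b'+1$ is exactly what keeps $1-b+b'\ge0$ so that one genuinely gains in $T$.) Once the scalar estimate is in hand, the only remaining point is to confirm that the reduction preserves the multiplier $\xi^2/\phi(\xi)$ faithfully; since this factor is already built into $F$ and $\phi(\xi)>0$ for $\xi\neq0$ with $\xi^2/\phi(\xi)\sim|\xi|$ near the origin, no separate low-frequency analysis is needed.
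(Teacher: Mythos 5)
Your proposal is correct and follows essentially the argument behind the paper's (un-displayed) proof: the paper itself offers no proof of this lemma, citing Esfahani--Farah \cite{34}, and your route --- splitting $W_\R(0,g)$ into the two half-wave branches $e^{\pm i(t-\tau)\phi(D)}$ acting on $F=\bigl(\tfrac{\xi^2\widehat g}{2i\phi(\xi)}\bigr)^\vee$, passing between the weight $\langle|\tau|-\phi(\xi)\rangle$ and the branch weights $\langle\tau\mp\phi(\xi)\rangle$ via $\bigl||\tau|-\phi(\xi)\bigr|\le|\tau\mp\phi(\xi)|$ together with $b\ge0\ge b'$, conjugating by $e^{\mp it\phi(\xi)}$ to reduce to the scalar bound $\|\eta_T\int_0^t h\|_{H^b}\lesssim T^{1-b+b'}\|h\|_{H^{b'}}$, and proving that by the Taylor/endpoint splitting at $|\lambda|\sim 1/T$ --- is precisely the standard Ginibre--Tsutsumi--Velo-type proof used in that reference. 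Your bookkeeping of the $T$-powers ($T^{k+\frac12-b}\cdot T^{-k+\frac12+b'}$ in the low-modulation terms, and the endpoint $|\lambda|\sim1/T$ with $b'>-\tfrac12$ for convergence and $b\le b'+1$ for the sup in the high-modulation terms) is exactly right.
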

\section{Linear Problems}
This section concerns on  looking for an explicit formula  for the IBVP of the associated forced linear equation:
\begin{equation}\label{lin-ex}
\begin{cases}
u_{tt}-u_{xx}+\beta u_{xxxx}-u_{xxxxxx} =g_{xx}(x,t),\quad x>0\mbox{, }t>0,\\
u(x,0)=\varphi (x), u_t(x,0)=\psi ''(x), \\
u(0,t)=h_1(t), u_{xx}(0,t)=h_2(t), u_{xxxx}(0,t)=h_3(t),
\end{cases}
\end{equation}
along with some related estimates.
\subsection{ Boundary integral operators and solution formulas}

 First, we consider the linear problem with homogeneous initial conditions,
\begin{equation}\label{homoin}
\begin{cases}
u_{tt}-u_{xx}+\beta u_{xxxx}-u_{xxxxxx} =0,\quad x>0\mbox{, }t>0,\\
u(x,0)=0, u_t(x,0)=0, \\
u(0,t)=h_1(t), u_{xx}(0,t)=h_2(t), u_{xxxx}(0,t)=h_3(t).
\end{cases}
\end{equation}
Applying the Laplace transform with respect to $t$,   the IBVP  (\ref{homoin})    is converted to
\begin{equation}\label{eq1}
\begin{cases}
\rho^2\widetilde{u}-\widetilde{u}_{xx}+ \beta \widetilde{u}_{xxxx}-\widetilde{u}_{xxxxxx}=0,\\
\widetilde{u}(0,\rho)=\widetilde{h}_1(\rho), \widetilde{u}_{xx}(0,\rho)=\widetilde{h}_2(\rho), \widetilde{u}_{xxxx}(0,\rho)=\widetilde{h}_3(\rho),\\
\widetilde{u}(+\infty,\rho)=0, \widetilde{u}_{xx}(+\infty,\rho)=0, \widetilde{u}_{xxxx}(+\infty,\rho)=0,
\end{cases}
\end{equation}
where
\begin{equation*}
    \widetilde{u}(x,\rho)=\int^{+\infty}_0 e^{-\rho t}u(x,t)dt,
 \qquad
   \widetilde{h}_j(\rho)=\int^{+\infty}_0 e^{-\rho t}h_j(t)dt, \hspace{0.5cm}j=1,2,3.
\end{equation*}
For any $\rho$ with $Re (\rho) > 0$ satisfying $\rho\ne  \frac{2\sqrt{2}\pm i}{3\sqrt{3}}$, the solution $\widetilde{u}(x,\rho)$ of  (\ref{eq1}) can be written  in the form
\begin{equation}\label{lap}
    \widetilde{u}(x,\rho)=\sum^3_{j=1}c_{j}(\rho)e^{\gamma_{j}(\rho)x},
\end{equation}
where  $\gamma _1=\gamma _1 (\rho) $, $\gamma _2=\gamma _2 (\rho) $ and $\gamma _3=\gamma _3 (\rho) $,   $Re ( \gamma _j) <0$  for $\ j=1,2,3$, are  three solutions  of the characteristic equation
\begin{equation}\label{ch}
    \gamma^6-\gamma^4+\gamma^2-\rho^2=0,
\end{equation}
and  $c_j(\rho), \mbox{ for }j=1,2,3$,  solve the linear system
\begin{equation}\label{system}
\begin{cases}
c_{1}+c_{2}+c_{3}=\widetilde{h}_1(\rho),\\
\gamma_{1}^2c_{1}+\gamma_{2}^2c_{2}+\gamma_{3}^2c_{3}=\widetilde{h}_2(\rho),\\
\gamma_{1}^4c_{1}+\gamma_{2}^4c_{2}+\gamma_{3}^4c_{3}=\widetilde{h}_3(\rho).
\end{cases}
\end{equation}

Let $\Delta(\rho)$ be the determinant of the coefficient matrix of \eqref{system} and $\Delta_{j}(\rho)$ be the determinants of the matrices with the $j-$th column replacing by $(\widetilde{h}_1(\rho),\widetilde{h}_2(\rho),\widetilde{h}_3(\rho))^T$ for $j=1,2,3$. In addition, for $j,m=1,2,3$,  $\Delta_{j,m}(\rho)$ is  obtained from $\Delta_j(\rho)$ by letting $\widetilde{h}_m(\rho)=1$ and $\widetilde{h}_j(\rho)=0$ for $j\neq m$. Moreover,  $\gamma_j^+(\mu)$  for $j=1,2,3$ denote the three distinct roots of the characteristic equation (\ref{ch}) by changing the variable $\rho=i \phi(\mu)$ where
\begin{equation}\label{mu0}
\phi(\mu)=\mu \sqrt{\mu^4+\mu^2+1}, \quad \mbox{for }\quad \mu>0.
\end{equation}
Thus, one can solve,
\begin{equation}\label{gamma}
  \gamma_1^+ (\mu) = i\mu, \quad  \gamma_2^+ (\mu) = -p(\mu)-iq(\mu), \quad  \gamma_3^+ (\mu) = -p(\mu)+iq(\mu),
\end{equation}
where
\[p(\mu)=\frac{1}{\sqrt{2}}\left(\sqrt{\mu^2+1+\sqrt{4\mu^4+4\mu^2+4}}\right),\]
\[q(\mu)=\frac{1}{\sqrt{2}}\left(\sqrt{\sqrt{4\mu^4+4\mu^2+4}-\mu^2-1}\right).\]
Taking inverse Laplace transform of \eqref{lap}, one  can obtain the formula for the solution of \eqref{homoin} presented in the following lemma.
\begin{lemma}\label{bdr}
Given $\vec{h} = (h_1,h_2,h_3)$, the solution $u$ of the IBVP (\ref{homoin}) can be written in the form
\[ u(x,t) =[W_{bdr} (\vec{h})] (x,t):= \sum _{m=1}^3 [W_{bdr,m} (h_m)](x,t)  \]
with
\[ [W_{bdr,m}(h_m) ](x,t)= I_m (x,t) +\overline{I_m(x,t)}, \qquad m=1,2,3,\]
where
\begin{equation*}
    I_m(x,t)=\frac{1}{2\pi }\sum^3_{j=1}\int^{+\infty}_0 e^{i\tau\mu \sqrt{\mu^4+\mu^2+1}t}e^{\gamma_{j}^+(\mu)x}
    \frac{\Delta_{j,m}^+(\mu)}{\Delta^+(\mu)}\frac{3\mu^4+2\mu^2+1}{\sqrt{\mu^4+\mu^2+1}}\widetilde{h}_m^+(\mu)d\mu,
\end{equation*}
and
\[\widetilde{h}_m^+(\mu)=\int^\infty_0 e^{i\phi(\mu )t}h_m(t)dt.\]
\end{lemma}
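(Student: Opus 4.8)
The plan is to derive the formula through the Laplace transform together with the inverse (Bromwich) representation, in the spirit of Bona--Sun--Zhang and Holmer for the KdV equation. The preliminary steps are already in place in the excerpt: applying the Laplace transform in $t$ turns \eqref{homoin} into the constant-coefficient ODE boundary value problem \eqref{eq1}, and imposing the decay conditions at $x=+\infty$ forces us to retain only the three characteristic roots $\gamma_j(\rho)$ with $\mathrm{Re}(\gamma_j)<0$, giving the representation \eqref{lap}. Cramer's rule applied to \eqref{system} yields $c_j(\rho)=\Delta_j(\rho)/\Delta(\rho)$, and since $\Delta_j$ depends linearly on the column $(\widetilde h_1,\widetilde h_2,\widetilde h_3)^T$, cofactor expansion along that column gives $\Delta_j=\sum_{m=1}^3\Delta_{j,m}\,\widetilde h_m$. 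Substituting back and inverting the Laplace transform produces, for any fixed $r>0$,
\[
u(x,t)=\frac{1}{2\pi i}\int_{\mathrm{Re}\,\rho=r}e^{\rho t}\sum_{m=1}^3\Big(\sum_{j=1}^3\frac{\Delta_{j,m}(\rho)}{\Delta(\rho)}\,e^{\gamma_j(\rho)x}\Big)\widetilde h_m(\rho)\,d\rho,
\]
which I would organize as $u=\sum_m W_{bdr,m}(h_m)$, the inner sum being the kernel of $W_{bdr,m}$.

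The core of the argument is to deform this vertical contour onto the imaginary axis and then parametrize it by $\rho=i\phi(\mu)$. Here the determinant $\Delta(\rho)=\prod_{j<k}(\gamma_k^2-\gamma_j^2)$ is of Vandermonde type, so it vanishes exactly when two characteristic values $\gamma_j^2$ coincide; these are the zeros of the discriminant $27\rho^4-14\rho^2+3$ of the cubic $w^3-w^2+w-\rho^2$ in $w=\gamma^2$, namely the excluded points $\rho=\tfrac{2\sqrt2\pm i}{3\sqrt3}$, at which the Cramer representation \eqref{lap} degenerates into an indeterminate $0/0$ while the solution $\widetilde u$ itself extends analytically (a removable singularity). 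Since these points have $\mathrm{Re}(\rho)=\tfrac{2\sqrt2}{3\sqrt3}>0$ they stay off the imaginary axis and may be skirted. The step I expect to be the genuine obstacle is establishing enough decay of the integrand as $|\rho|\to\infty$ to kill the horizontal connecting segments and justify the deformation by Cauchy's theorem: this requires uniform bounds on the ratios $\Delta_{j,m}(\rho)/\Delta(\rho)$ and uses crucially that $\mathrm{Re}(\gamma_j)\le 0$ keeps $|e^{\gamma_j(\rho)x}|$ bounded for $x>0$, while near $\rho=0$ one must check that the kernel's singularities remain integrable.

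Once the contour is the imaginary axis, I would substitute $\rho=i\phi(\mu)$ with $\phi(\mu)=\mu\sqrt{\mu^4+\mu^2+1}$, for which a direct computation gives
\[
\frac{d\rho}{d\mu}=i\,\phi'(\mu)=i\,\frac{3\mu^4+2\mu^2+1}{\sqrt{\mu^4+\mu^2+1}},
\]
so that $\tfrac{1}{2\pi i}\,d\rho$ becomes $\tfrac{1}{2\pi}\,\phi'(\mu)\,d\mu$ and reproduces exactly the weight in the statement, while $e^{\rho t}=e^{i\phi(\mu)t}$ supplies the oscillatory factor, the roots $\gamma_j(\rho)$ become the $\gamma_j^+(\mu)$ of \eqref{gamma} (with $\gamma_1^+=i\mu$ the limiting purely oscillatory mode), and $\Delta,\Delta_{j,m},\widetilde h_m$ become their $^+$ counterparts. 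Finally, because $\phi$ is odd and the characteristic equation enjoys the symmetry $\gamma\mapsto\overline\gamma$, $\rho\mapsto\overline\rho$, the portion of the imaginary axis with $\mu<0$ contributes exactly the complex conjugate of the $\mu>0$ portion; here I would invoke that $h_m$ is real, so $\widetilde h_m(\overline\rho)=\overline{\widetilde h_m(\rho)}$. Folding the two halves together yields $W_{bdr,m}(h_m)=I_m+\overline{I_m}$ with $I_m$ the integral over $\mu\in(0,\infty)$ displayed in the statement, completing the proof.
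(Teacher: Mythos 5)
Your proposal is correct and follows essentially the same route as the paper: Laplace transform in $t$, reduction to the ODE boundary value problem \eqref{eq1} with the decaying roots $\gamma_j(\rho)$, Cramer's rule giving the $\Delta_{j,m}/\Delta$ kernels, inverse Laplace transform with the contour moved to the imaginary axis, the substitution $\rho=i\phi(\mu)$ whose Jacobian $\phi'(\mu)=\frac{3\mu^4+2\mu^2+1}{\sqrt{\mu^4+\mu^2+1}}$ produces the stated weight, and conjugate symmetry folding the $\mu<0$ half into $\overline{I_m}$. Your identification of the excluded points $\rho=\frac{2\sqrt2\pm i}{3\sqrt3}$ as the discriminant zeros of the cubic in $w=\gamma^2$ is a correct added detail consistent with the paper, which itself defers the full justification of the contour deformation to the earlier work \cite{96}.
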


Next, for the linear problems with homogeneous boundary conditions,
\begin{equation}\label{in2-0}
    \begin{cases}
    u_{tt}-u_{xx}+ \beta u_{xxxx}-u_{xxxxxx}=0,\quad x>0 \mbox{, }t>0,\\
    u(x,0)=\varphi (x), u_t(x,0)=\psi ''(x),\\
    u(0,t)=0, u_ {xx}(0,t)=0, u_{xxxx}(0,t)=0,
    \end{cases}
\end{equation}
and
\begin{equation}\label{in2-1}
    \begin{cases}
    u_{tt}-u_{xx}+ \beta u_{xxxx}-u_{xxxxxx}=g_{xx}(x,t),\quad x>0 \mbox{, }t>0,\\
    u(x,0)=0, u_t(x,0)=0,\\
    u(0,t)=0, u_ {xx}(0,t)=0, u_{xxxx}(0,t)=0,
    \end{cases}
\end{equation}
we have  the following two lemmas based on Lemma \ref{bdr}.
\begin{lemma}\label{nonf}  For any $s$, given  $\varphi\in H^s (\R ^+)$ and  $\psi \in H^{s-1} (\R ^+)$, let   $\varphi ^*, \ \psi ^*$   be  extensions of $\varphi $ and $\psi $ from $\R^+\to \R$,  respectively.
For \[p(x,t)=[ W_{\R} (\varphi ^*, \psi ^*)] (x,t),\]  set $\vec{p}=(p_1,p_2,p_3)$ and
\[ p_1 (t)=p(0,t), \quad p_2 (t)= p_{xx} (0,t), \quad p_3 (t)=p_{xxxx} (0,t). \]
Then the solution $u $ of the IBVP (\ref{in2-0}) can be written in the form
\[ u(x,t) = [W_{\R} (\varphi ^*, \psi ^*)] (x,t)- [W_{bdr} (\vec{p})] (x,t) .\]
\end{lemma}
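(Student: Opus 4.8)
The plan is to exploit linearity and superpose the whole-line evolution of the extended data against a boundary corrector furnished by Lemma~\ref{bdr}. Set $p=[W_{\R}(\varphi^*,\psi^*)]$. By the very definition of $W_{\R}$, the function $p$ solves $p_{tt}-p_{xx}+\beta p_{xxxx}-p_{xxxxxx}=0$ on $\R\times\R^+$ with $p(x,0)=\varphi^*(x)$ and $p_t(x,0)=(\psi^*)''(x)$. Restricting to $x>0$ and using that $\varphi^*,\psi^*$ coincide with $\varphi,\psi$ on $\R^+$ (so their derivatives agree there as well), $p$ already satisfies both the interior equation and the prescribed initial data of (\ref{in2-0}); only its boundary traces $p_1(t)=p(0,t)$, $p_2(t)=p_{xx}(0,t)$, $p_3(t)=p_{xxxx}(0,t)$ obstruct it from solving (\ref{in2-0}), since they are in general nonzero.

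The point requiring care---and the one I expect to be the main obstacle---is that, because $-\frac12<s\le 0$, the data are only distributions and these traces are not classical. I would resolve this by invoking the sharp Kato smoothing of Lemma~\ref{R}, which bounds $\sup_{x}\|\partial_x^j p(x,\cdot)\|_{H^{(s-j+1)/3}}$ by $\|\varphi^*\|_{H^s(\R)}+\|\psi^*\|_{H^{s-1}(\R)}$ for $j=0,\dots,5$ and, in particular, makes $x\mapsto\partial_x^j p(x,\cdot)$ a continuous map into $H^{(s-j+1)/3}$. Hence the traces genuinely exist, with $p_1\in H^{(s+1)/3}$, $p_2\in H^{(s-1)/3}$, $p_3\in H^{(s-3)/3}$, which is exactly the regularity needed for $\vec p=(p_1,p_2,p_3)$ to be an admissible boundary datum for $W_{bdr}$. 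I would then set $w=[W_{bdr}(\vec p)]$; by Lemma~\ref{bdr} it solves the interior equation for $x>0$, has $w(x,0)=w_t(x,0)=0$, and realizes the traces $w(0,t)=p_1(t)$, $w_{xx}(0,t)=p_2(t)$, $w_{xxxx}(0,t)=p_3(t)$.

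Finally I would put $u=p-w$ on $\R^+\times\R^+$ and verify directly: by linearity $u$ solves the interior equation; the initial data read $u(x,0)=\varphi(x)$ and $u_t(x,0)=\psi''(x)$; and the boundary conditions collapse to $u(0,t)=p_1-p_1=0$, $u_{xx}(0,t)=p_2-p_2=0$, $u_{xxxx}(0,t)=p_3-p_3=0$. Thus $u=[W_{\R}(\varphi^*,\psi^*)]-[W_{bdr}(\vec p)]$ solves (\ref{in2-0}); invoking uniqueness for the linear IBVP---the difference of two solutions solves the fully homogeneous problem and so vanishes---identifies it as the solution. Everything past the trace step is bookkeeping with linearity. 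I would also remark that, although $\vec p$ depends on the chosen extension, the resulting $u$ on the quarter plane does not, again by uniqueness.
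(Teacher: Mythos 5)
Your proposal is correct and follows exactly the decomposition the paper intends: superpose the whole-line evolution of the extended data with the boundary corrector $W_{bdr}(\vec p)$ built from its traces, which is precisely the argument the paper defers to reference \cite{96}. Your added care about justifying the existence of the traces $p_j$ via the sharp Kato smoothing of Lemma \ref{R} is a welcome point that the paper leaves implicit.
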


\begin{lemma}\label{force} For $g\in C^\infty([0,T]\times\R^+)$, let $g^*\in C^\infty([0,T]\times\R)$ be   extension of $g$ from $\R^+ $ to $\R$. Then the corresponding solution $u$ of (\ref{in2-1}) can be written as
\[ u(x,t) = \int ^t_0 [  W_\mathbb{R} (0,g^*)](x, t-\tau ) d\tau - [W_{bdr} (\vec{q})](x,t)\]
where $\vec{q}=(q_1,q_2,q_3)$ and
\[ q_1 (t)=q(0,t), \quad q_2 (t)= q_{xx} (0,t), \quad q_3 (t)=q_{xxxx} (0,t) \]
with
\[ q(x,t) = \int ^t_0 [  W_\mathbb{R} (0,g^*)](x, t-\tau ) d\tau .\]
\end{lemma}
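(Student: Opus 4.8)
The plan is to reuse the correction (or ``lifting'') strategy that produced Lemma~\ref{nonf}: first solve the forced equation on the whole line while ignoring the boundary, then subtract a boundary term that cancels the resulting traces at $x=0$. Concretely, I would set
\[ q(x,t) = \int_0^t [W_\mathbb{R}(0, g^*)](x, t-\tau)\, d\tau. \]
By the Duhamel principle recorded just before Lemma~\ref{f1}, $q$ solves the forced whole-line equation $q_{tt} - q_{xx} + \beta q_{xxxx} - q_{xxxxxx} = g^*_{xx}$ on $\mathbb{R}\times\mathbb{R}^+$ with homogeneous Cauchy data $q(x,0)=0$, $q_t(x,0)=0$. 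Since $g^*=g$ on $\mathbb{R}^+$, the restriction of $q$ to the quarter plane obeys the inhomogeneous equation of \eqref{in2-1}; its boundary traces $q_1=q(0,t)$, $q_2=q_{xx}(0,t)$, $q_3=q_{xxxx}(0,t)$ are in general nonzero, so $q$ alone is not the solution we seek.

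To repair the boundary conditions I would invoke Lemma~\ref{bdr} with $\vec{h}=\vec{q}=(q_1,q_2,q_3)$. That lemma guarantees that $W_{bdr}(\vec{q})$ solves the \emph{unforced} IBVP \eqref{homoin} with zero initial data and with boundary traces exactly $q_1,q_2,q_3$, i.e. $[W_{bdr}(\vec{q})](0,t)=q_1(t)$, $[W_{bdr}(\vec{q})]_{xx}(0,t)=q_2(t)$ and $[W_{bdr}(\vec{q})]_{xxxx}(0,t)=q_3(t)$. Setting
\[ u = q - W_{bdr}(\vec{q}), \]
I would then check the three defining properties of \eqref{in2-1} directly. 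The equation holds because $W_{bdr}(\vec{q})$ is annihilated by the linear operator, so the forcing $g_{xx}$ survives in $u$; both pieces carry zero Cauchy data, so $u(x,0)=u_t(x,0)=0$; and the boundary traces cancel term by term, $u(0,t)=q_1-q_1=0$, $u_{xx}(0,t)=q_2-q_2=0$, $u_{xxxx}(0,t)=q_3-q_3=0$. Hence $u$ satisfies every condition in \eqref{in2-1}, and by uniqueness for the linear IBVP it must be the solution, which is the asserted formula.

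Most of this is bookkeeping, and the algebra of the decomposition is essentially identical to that of Lemma~\ref{nonf}; the steps needing genuine care are the ones that make the traces meaningful and the cancellation exact. Because $g\in C^\infty([0,T]\times\mathbb{R}^+)$ and $g^*$ is a smooth extension, the propagator $W_\mathbb{R}(0,g^*)$ and the Duhamel integral are smooth in $x$ up to $x=0$, so $q_1,q_2,q_3$ are well defined and the even-order traces $q(0,t),q_{xx}(0,t),q_{xxxx}(0,t)$ can legitimately be fed into Lemma~\ref{bdr}. The point I would treat most carefully is therefore confirming that the boundary-operator formula of Lemma~\ref{bdr} reproduces precisely these prescribed traces (so the cancellation is exact rather than merely formal), which is exactly the content built into the derivation \eqref{lap}--\eqref{system} of $W_{bdr}$.
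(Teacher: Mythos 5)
Your argument is correct and is essentially the same superposition construction the paper uses (and, for this lemma, defers to \cite{96}): take the whole-line Duhamel solution $q$, read off its traces $q(0,t)$, $q_{xx}(0,t)$, $q_{xxxx}(0,t)$, and subtract the boundary corrector $W_{bdr}(\vec{q})$ built in Lemma \ref{bdr}, exactly as in Lemma \ref{nonf}. Your added care about the smoothness of $q$ up to $x=0$ and the exactness of the trace cancellation is appropriate but does not constitute a different route.
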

 For more details on the generation of these explicit  solutions of the IBVPs, the reader can refer to \cite{96}.
\subsection{Linear estimate}
In this subsection,   the estimates of the solutions described in Lemma \ref{bdr}, \ref{nonf} and \ref{force} are considered.
\subsubsection{Linear problem with homogeneous initial conditions}
First, let us   estimate  the solution of the IBVP
\begin{equation} \label{ln-1}
\begin{cases}
u_{tt}-u_{xx}+\beta u_{xxxx}-u_{xxxxxx}=0,\quad x>0,t>0,\\
u(x,0)=0, u_t(x,0)=0, \\
u(0,t)=h_1(t), u_{xx}(0,t)=h_2(t), u_{xxxx}(0,t)=h_3(t).
\end{cases}
\end{equation}
We denote $\vec{h}=(h_1,h_2,h_3)$ and provide the following conclusions on $W_{bdr}$ introduced in Lemma \ref{bdr}.
\begin{proposition}\label{1}
For $s\leq 0$,
\[\sup_{t\geq0}\|[W_{bdr}(\vec{h})](\cdot,t)\|_{H^{s}(\R^+)}\lesssim \|h_1\|_{H^{\frac{s+1}{3}}(\R^+)}+\|h_2\|_{H^{\frac{s-1}{3}}(\R^+)}+\|h_3\|_{H^{\frac{s-3}{3}}(\R^+)},\]
\[\sup_{x\geq0}\|\partial_x^k[ W_{bdr}(\vec{h})](x,\cdot)\|_{H^{\frac{s-k+1}{3}}(\R^+)}\lesssim \|h_1\|_{H^{\frac{s+1}{3}}(\R^+)}+\|h_2\|_{H^{\frac{s-1}{3}}(\R^+)}+\|h_3\|_{H^{\frac{s-3}{3}}(\R^+)},\]
with $k=0,1,2,...,5.$
\end{proposition}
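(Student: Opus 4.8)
The plan is to work from the explicit formula for $[W_{bdr}(\vec h)](x,t)$ given in Lemma \ref{bdr}, reducing both estimates to weighted $L^2$ bounds in the Fourier/Laplace variable $\mu$. Since $W_{bdr}=\sum_{m=1}^3 W_{bdr,m}$ and $W_{bdr,m}(h_m)=I_m+\overline{I_m}$, it suffices to estimate each $I_m(x,t)$ separately and then sum. The key object is the integrand
\[
e^{i\phi(\mu)t}\,e^{\gamma_j^+(\mu)x}\,\frac{\Delta_{j,m}^+(\mu)}{\Delta^+(\mu)}\,\frac{3\mu^4+2\mu^2+1}{\sqrt{\mu^4+\mu^2+1}}\,\widetilde h_m^+(\mu),
\]
with $\phi(\mu)=\mu\sqrt{\mu^4+\mu^2+1}$ and $\gamma_1^+=i\mu$, $\gamma_{2,3}^+=-p(\mu)\pm iq(\mu)$ from \eqref{gamma}. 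First I would change variables from $\mu$ to the frequency $\lambda=\phi(\mu)$, so that $e^{i\phi(\mu)t}=e^{i\lambda t}$ becomes a genuine one-dimensional Fourier kernel in $t$; this introduces a Jacobian $d\lambda=\phi'(\mu)\,d\mu=\frac{3\mu^4+2\mu^2+1}{\sqrt{\mu^4+\mu^2+1}}\cdot(\text{factor})\,d\mu$, which is precisely why the weight $\frac{3\mu^4+2\mu^2+1}{\sqrt{\mu^4+\mu^2+1}}$ appears in $I_m$. The upshot is that $\widetilde h_m^+(\mu)$ is essentially the Fourier transform of the (time-restricted) boundary datum evaluated along $\lambda=\phi(\mu)$, so the $H^{s_m}(\R^+)$ norm of $h_m$ controls $\|\langle\lambda\rangle^{s_m}\widetilde h_m^+\|_{L^2}$ after accounting for the change of variables.

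For the spatial-supremum (Kato-type) estimate, the strategy is to view $x\mapsto \partial_x^k I_m(x,\cdot)$ pointwise in $x$ and bound its $H_t^{(s-k+1)/3}(\R^+)$ norm uniformly. The term $j=1$ (with $\gamma_1^+=i\mu$) is oscillatory, $|e^{i\mu x}|=1$, so after the substitution $\lambda=\phi(\mu)$ it becomes a Fourier multiplier in $t$ with a symbol independent of $x$ in modulus; here the $H^{(s-k+1)/3}_t$ bound follows by matching the temporal weight $\langle\lambda\rangle^{(s-k+1)/3}$ against $\langle\lambda\rangle^{s_m}\widetilde h_m^+$, which is where the scaling exponents $s_1=\frac{s+1}{3}$, $s_2=\frac{s-1}{3}$, $s_3=\frac{s-3}{3}$ are forced (each spatial derivative $\partial_x^k$ brings a factor $\mu^k\sim\lambda^{k/3}$). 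The terms $j=2,3$ carry decaying exponentials $|e^{\gamma_j^+(\mu)x}|=e^{-p(\mu)x}$ with $p(\mu)\gtrsim 1+\mu$ for large $\mu$; these are even better since the decay absorbs the polynomial growth from $\partial_x^k$ uniformly in $x\ge 0$, and the standard way to handle this is the elementary bound $\sup_{x\ge0}x^a e^{-cx}\lesssim c^{-a}$ together with Lemma \ref{tz1} or Lemma \ref{tz2} to integrate the remaining kernel.

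The main obstacle, and the part deserving the most care, is controlling the ratios $\Delta_{j,m}^+(\mu)/\Delta^+(\mu)$ uniformly in $\mu\in(0,\infty)$, including their precise asymptotic order as $\mu\to 0^+$ and $\mu\to+\infty$. These are rational expressions in $\gamma_1^+=i\mu$ and $p(\mu)\pm iq(\mu)$, and $\Delta^+(\mu)$ is a Vandermonde-type determinant in the $\gamma_j^2$; one must verify that $\Delta^+(\mu)$ does not vanish for $\mu>0$ (the excluded points $\rho=\frac{2\sqrt2\pm i}{3\sqrt3}$ correspond to $\mu$-values off the positive real axis, so $\Delta^+$ is bounded below away from zero on compacta and has a definite growth rate at the endpoints). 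The plan is to compute the leading-order behavior of each $\gamma_j^+$, $p$, $q$ as $\mu\to\infty$ (where $p,q\sim\mu$ up to constants) and as $\mu\to 0$ (where $p\to 1$, $q\to$ a finite limit, and $\gamma_1^+\to 0$), extract the exact power of $\mu$ governing $\Delta_{j,m}^+/\Delta^+$ in each regime, and check that after multiplying by the weight $\frac{3\mu^4+2\mu^2+1}{\sqrt{\mu^4+\mu^2+1}}$ and the derivative factor $\mu^k$, the resulting temporal weight is exactly $\langle\lambda\rangle^{(s-k+1)/3}$ matched against $h_m$'s regularity $s_m$. Once these symbol bounds are in hand, both inequalities reduce to Plancherel in $\lambda$ together with the elementary convolution estimates of Lemmas \ref{tz1}–\ref{young}; the low-frequency ($\mu\to0$) analysis is where $s\le 0$ is used to guarantee integrability.
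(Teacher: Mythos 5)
Your plan follows essentially the same route as the paper's proof: reduce to the explicit formula of Lemma \ref{bdr}, treat the three roots $\gamma_j^+$ separately, control the ratios $\Delta_{j,m}^+/\Delta^+$ together with the Jacobian weight $\phi'(\mu)=\frac{3\mu^4+2\mu^2+1}{\sqrt{\mu^4+\mu^2+1}}$ asymptotically, and pass between the spatial frequency ($\mu$, or $q(\mu)$ for the decaying roots) and the temporal frequency $\lambda=\phi(\mu)$ via Plancherel so that the exponents $\frac{s+1}{3},\frac{s-1}{3},\frac{s-3}{3}$ come out. The only differences are cosmetic: the paper cites \cite{96} for the second (Kato-type) estimate rather than reproving it, and for $j=2,3$ in the first estimate it discards $e^{-p(\mu)x}$ and uses $q(\mu)$ as the spatial Fourier variable, exactly as your weighted-$L^2$ reduction would.
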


\begin{proposition}\label{2}
For $s\leq 0$ and $\left|\frac12-b\right|$ sufficient small,
\[\|\eta(t)W_{bdr}(\vec{h})\|_{X^{s,b}}\lesssim \|h_1\|_{H^{\frac{s+1}{3}}(\R^+)}+\|h_2\|_{H^{\frac{s-1}{3}}(\R^+)}+\|h_3\|_{H^{\frac{s-3}{3}}(\R^+)}.\]
\end{proposition}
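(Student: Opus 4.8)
The plan is to start from the explicit representation of Lemma~\ref{bdr}, $W_{bdr}(\vec h)=\sum_{m=1}^{3}\bigl(I_m+\overline{I_m}\bigr)$, and to reduce the assertion to a bound on each $I_m$. Because the weight $\langle\xi\rangle^{s}\langle|\tau|-\phi(\xi)\rangle^{b}$ is even in $(\xi,\tau)$ and $\widehat{\overline{f}}(\xi,\tau)=\overline{\widehat{f}(-\xi,-\tau)}$, the $X^{s,b}$ norm is invariant under complex conjugation, so $\|\eta\,\overline{I_m}\|_{X^{s,b}}=\|\eta\,I_m\|_{X^{s,b}}$ and it suffices to control $\|\eta\,I_m\|_{X^{s,b}}$ for $m=1,2,3$. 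Using Lemma~\ref{equ} I would replace the weight by the KdV-type $\langle|\tau|-|\xi|^{3}-\tfrac{\beta}{2}|\xi|\rangle^{b}$, aligning the oscillatory-integral analysis with the half-line KdV theory, and then split $I_m=\sum_{j=1}^{3}I_{m,j}$ according to the three characteristic roots $\gamma_j^{+}(\mu)$ in \eqref{gamma}.

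The root $\gamma_1^{+}(\mu)=i\mu$ is purely imaginary, so $I_{m,1}$ is a superposition of plane waves $e^{i(\mu x+\phi(\mu)t)}$ living exactly on the dispersion curve $\{\tau=\phi(\mu),\ \xi=\mu\}$. After the cutoff its space--time transform factorizes as $\widehat{\eta}(\tau-\phi(\xi))\,A_m(\xi)\mathbf{1}_{\xi>0}$, where $A_m$ carries the amplitude $\tfrac{\Delta_{1,m}^{+}}{\Delta^{+}}\cdot\tfrac{3\mu^{4}+2\mu^{2}+1}{\sqrt{\mu^{4}+\mu^{2}+1}}\,\widetilde{h}_m^{+}$; integrating in $\tau$, the $b$-weight acts only on the Schwartz tail of $\widehat{\eta}$ and is harmless, whence $\|\eta\,I_{m,1}\|_{X^{s,b}}^{2}\lesssim\int_0^{\infty}\langle\xi\rangle^{2s}|A_m(\xi)|^{2}\,d\xi$. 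With the large- and small-$\mu$ asymptotics of the Cramer determinants (which give $\Delta_{j,m}^{+}/\Delta^{+}=O(\langle\mu\rangle^{2-2m})$ and the identity $\tfrac{3\mu^{4}+2\mu^{2}+1}{\sqrt{\mu^{4}+\mu^{2}+1}}=\phi'(\mu)\sim\langle\mu\rangle^{2}$), this integral is dominated by $\|h_m\|_{H^{(s-2m+3)/3}(\R^{+})}^{2}$ after the change of variables $\tau=\phi(\mu)$, exactly the symbol bookkeeping behind Proposition~\ref{1}. This part works for every $b$.

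The roots $\gamma_2^{+}(\mu)=-p(\mu)-iq(\mu)$ and $\gamma_3^{+}(\mu)=-p(\mu)+iq(\mu)$ have $\operatorname{Re}\gamma_j^{+}=-p(\mu)<0$, so $I_{m,2},I_{m,3}$ decay as $x\to+\infty$ but lie outside $L^{2}(\R_x)$ and, because $e^{-p(\mu)x}$ grows in $\mu$ when $x<0$, cannot be continued to $x<0$ through the $\mu$-integral. Here lies the main obstacle: the admissible zero-extension $e^{\gamma_j^{+}(\mu)x}\mathbf{1}_{x>0}$ has a jump at $x=0$, hence a spatial transform $\bigl(p(\mu)+i(\pm q(\mu)+\xi)\bigr)^{-1}$ decaying only like $|\xi|^{-1}$; paired with the weight $\langle|\tau|-\phi(\xi)\rangle^{b}$, whose support forces distance $\sim\mu^{3}$ from the curve (since $p\sim q\sim\mu$, $\phi\sim\mu^{3}$, so the weight reaches $\mu^{3b}$), this slow decay makes the high-$\xi$ tail diverge. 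The remedy I would use is the even (reflection) extension $e^{\gamma_j^{+}(\mu)|x|}$, which agrees with $I_{m,j}$ for $x>0$, is continuous across $x=0$, and whose spatial transform $-2\gamma_j^{+}\big/\bigl((\gamma_j^{+})^{2}+\xi^{2}\bigr)$ decays like $|\xi|^{-2}$ while never developing a small denominator (since $|(\gamma_j^{+})^{2}+\xi^{2}|\gtrsim p\,q\sim\mu^{2}$ for real $\xi$).

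With this extension the transform of $\eta\,I_{m,j}$ becomes $\int_0^{\infty}\widehat{\eta}(\tau-\phi(\mu))\,\dfrac{-2\gamma_j^{+}(\mu)}{(\gamma_j^{+}(\mu))^{2}+\xi^{2}}\,A_{m,j}(\mu)\,d\mu$, and I would estimate its $X^{s,b}(\R^{2})$ norm by integrating first in $\tau$: the Schwartz decay of $\widehat{\eta}$ localizes $\phi(\mu)\approx|\tau|$ and supplies the Jacobian weight $\langle\phi(\mu)\rangle^{-2/3}\sim\langle\mu\rangle^{-2}$, reducing the square of the norm to $\int_0^{\infty}|A_{m,j}(\mu)|^{2}\langle\mu\rangle^{-2}H(\mu)\,d\mu$ with $H(\mu)=\int_{\R}\langle\xi\rangle^{2s}\langle\phi(\mu)-\phi(\xi)\rangle^{2b}\,\bigl|{-2\gamma_j^{+}}/((\gamma_j^{+})^{2}+\xi^{2})\bigr|^{2}\,d\xi$. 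Splitting $H$ into $|\xi|\lesssim\mu$ and $|\xi|\gg\mu$ and invoking Lemmas~\ref{tz1}--\ref{tz2}, the $|\xi|^{-2}$ decay of the reflected transform renders the high-$\xi$ tail convergent precisely for $b<\tfrac12$ and yields $H(\mu)\lesssim\langle\mu\rangle^{2s+6b-1}$; inserting $|A_{m,j}(\mu)|\lesssim\langle\mu\rangle^{4-2m}|\widetilde{h}_m^{+}(\mu)|$, the $\mu$-integral collapses to $\int\langle\mu\rangle^{\,2s-4m+8+(6b-3)}|\widetilde{h}_m^{+}|^{2}\,d\mu$, which is $\lesssim\|h_m\|_{H^{(s-2m+3)/3}(\R^{+})}^{2}$ exactly when $b\le\tfrac12$. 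This critical balance at $b=\tfrac12$ is what constrains $|\tfrac12-b|$ to be small in the statement. Finally, the low-frequency regime $\mu\to0$ is harmless, since the symbols $\Delta_{j,m}^{+}/\Delta^{+}$ are smooth and nonsingular along the entire contour $\mu\in(0,\infty)$ and $s\le0$ keeps $\langle\xi\rangle^{s}$ bounded there.
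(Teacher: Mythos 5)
Your overall architecture is the same as the paper's: reduce to the three root contributions $I_{m,j}$, handle $\gamma_1^+=i\mu$ by recognizing a free evolution supported on the dispersion curve, and treat the decaying roots $\gamma_{2}^+,\gamma_3^+$ by extending $e^{\gamma_j^+ x}\big|_{x>0}$ to the whole line and estimating the resulting convolution with $\widehat{\eta}$ via Cauchy--Schwarz/Young and Lemma \ref{tz1}. The gap is in your choice of extension. The even reflection $e^{\gamma_j^+(\mu)|x|}$ is continuous but not $C^1$ at $x=0$ (its derivative jumps by $-2\gamma_j^+$), so its spatial transform decays only like $|\xi|^{-2}$, and your own bookkeeping shows the consequence: the high-frequency tail of $H(\mu)$ is $\int_{|\xi|\gg\mu}\mu^{2}|\xi|^{2s+6b-4}\,d\xi$, which at $s=0$ diverges for every $b\ge\frac12$, and your final comparison with $\|h_m\|_{H^{(s-2m+3)/3}}$ likewise forces $b\le\frac12$. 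But the statement asserts the bound for $\left|\frac12-b\right|$ small, i.e.\ on \emph{both} sides of $\frac12$, and the paper genuinely uses the side $b>\frac12$: in the proof of Theorem \ref{theorem} the term $\eta_T W_{bdr}(\vec q)$ is estimated in $X^{s,a_1}$ with $a_1=\frac{3-2b}{4}>\frac12$. So your argument, as written, proves only half of the claimed range, and the half it misses is the one the well-posedness argument relies on. Your diagnosis that ``$b\le\frac12$ is what constrains $|\frac12-b|$ to be small'' is therefore a misreading of the statement rather than an explanation of it.

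The repair is exactly the paper's device: instead of reflecting, extend with a \emph{smooth} one-sided profile, writing $e^{\gamma_2^+(\mu)x}=f\bigl(xp(\mu)\bigr)e^{-iq(\mu)x}$ for $x>0$ with $f(y)=e^{-y}\theta(y)$, $\theta$ a smooth cutoff equal to $1$ on $y\ge0$ and vanishing for $y<-1$. This extension agrees with $I_{m,j}$ on $x>0$, is $C^\infty$ on all of $\R$, and its spatial transform is $\frac{1}{p}\widehat f\bigl(\frac{\xi+q}{p}\bigr)$ with $|\widehat f(y)|\lesssim\langle y\rangle^{-N}$ for any $N$; taking $N=3$ already turns the tail into $\int_{|\xi|\gg\mu}|\xi|^{2s+6b-6}\,d\xi$, convergent for all $s\le0$ and all $b$ in a neighborhood of $\frac12$, and the resulting bound $\lesssim|\mu|^{2s+6b-5}$ closes the estimate on both sides of $b=\frac12$. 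The rest of your computation (the determinant asymptotics $\Delta_{j,m}^+/\Delta^+=O(\langle\mu\rangle^{2-2m})$, the Jacobian $\zeta=\phi(\mu)$, the conjugation symmetry of the $X^{s,b}$ norm, and the treatment of $\gamma_1^+$) is consistent with the paper and would go through unchanged once the extension is fixed.
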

\begin{proof}
(\textbf{Proposition \ref{1}}) The proof for the second estimate is same as for $s\geq0$ in \cite{96}. For the first estimate, it suffices to  prove for $\vec{h}=(h_1,0,0)$. According to Lemma \ref{bdr},
 \[[W_{bdr}(h_1,0,0)](x,t)=I_1(x,t)+\overline{I_1(x,t)},\]
where
\begin{align*}
    \emph{\textrm{I}}_1(x,t)=&\frac{1}{2\pi }\sum^3_{j=1}\int^{+\infty}_{0} e^{i\phi(\mu)t}e^{\gamma^+_{j}(\mu)x}
    \frac{\Delta^+_{j,m}(\mu)}{\Delta^+(\mu)}\frac{3\mu^4+2\mu^2+1}{\sqrt{\mu^4+\mu^2+1}}\widetilde{h}^+_1(\mu)d\mu\\
    :=&\frac{1}{2\pi }\sum^3_{j=1}I_{1,j},
\end{align*}
with
$\widetilde{h}_1^+(\mu)=\int^\infty_0 e^{i\phi(\mu )t}h_1(t)dt$ and $\gamma_1^+$, $\gamma_2^+$ ,$\gamma_3^+$ given in \eqref{gamma}.

 It suffices to show the estimate for $I_1$. We first estimate $I_{1,1}$, and note that $\gamma_1^+ (\mu) = i\mu$, through direct computation, one has
\[\frac{\Delta^+_{1,1}(\mu)}{\Delta^+(\mu)} \sim 1 \quad  \mbox{and }\quad \frac{3\mu^4+2\mu^2+1}{\sqrt{\mu^4+\mu^2+1}}\sim \mu^2\quad \mbox{as}\ \ \mu\rightarrow  \infty,\]
thus,
\begin{align*}
\|I_{1,1}(\cdot,t)\|_{H^s(\R^+)}\lesssim &\left\|\int^{+\infty}_{0} e^{i\mu x}
    \mu^2\widetilde{h}^+_1(\mu)d\mu\right\|_{H^s(\R^+)}\\
    \lesssim &\left\|\mu^s \mu^2 \widetilde{h}^+_1(\mu)\right\|_{L^2_\mu(\R^+)}\\
    \lesssim & \left\|\zeta^{\frac{n+1}{3}}\int^{+\infty}_0 e^{i\zeta \tau}h_1(\tau)d\tau\right\|_{L^2_\zeta(\R^+)}\lesssim \|h_1\|_{H^{\frac{s+1}{3}}(\R^+)}.
\end{align*}
Next, we consider the estimate for $I_{1,2}$, and note that $\gamma_2^+ (\mu) = -p(\mu)-iq(\mu)$, through direct computation, one has
\[\frac{\Delta^+_{2,1}(\mu)}{\Delta^+(\mu)} \sim 1 \quad  \mbox{and }\quad \frac{3\mu^4+2\mu^2+1}{\sqrt{\mu^4+\mu^2+1}}\sim \mu^2\quad \mbox{as}\ \ \mu\rightarrow  \infty,\]
thus,
\begin{align*}
\|I_{1,2}(\cdot,t)\|_{H^s(\R^+)}\lesssim &\left\|\int^{+\infty}_{0} e^{-iq(\mu) x}
    \mu^2\widetilde{h}^+_1(\mu)d\mu\right\|_{H^s(\R^+)}\\
    \lesssim &\left\|\int^{+\infty}_{0} e^{-iq x}
     \Big(\mu(q)\Big)^2\frac{d\mu}{dq}\widetilde{h}^+_1(q)dq\right\|_{H^s(\R^+)}\\
    \lesssim & \left\|q^s \Big(\mu(q)\Big)^2\frac{d\mu}{dq}\widetilde{h}^+_1(q)\right\|_{L^2_{q}(\R^+)}\\
    \lesssim& \left\| \Big(q(\mu)\Big)^s \mu^2(\frac{d\mu}{dq})^{\frac12}\widetilde{h}^+_1(\mu)\right\|_{L^2_{\mu}(\R^+)}  \lesssim  \|h_1\|_{H^{\frac{s+1}{3}}(\R^+)},
\end{align*}
since $q(\mu)\sim \mu$ as $\mu\rightarrow \infty$. The estimate on $I_{1,2}$ can be obtained similar to $I_{1,2}$. The proof is now complete.
\end{proof}

\begin{proof}
(\textbf{Proposition \ref{2}}) Similar  to Proposition \ref{1}, it suffices to prove for $\vec{h}=(h_1,0,0)$. Let us write
 \[[W_{bdr}(h_1,0,0)](x,t)=I_1(x,t)+\overline{I_1(x,t)},\]
with
\begin{align*}
    \emph{\textrm{I}}_1(x,t)&=\frac{1}{2\pi }\sum^3_{j=1}\int^{+\infty}_{0} e^{i\phi(\mu)t}e^{\gamma^+_{j}(\mu)x}
    \frac{\Delta^+_{j,m}(\mu)}{\Delta^+(\mu)}\frac{3\mu^4+2\mu^2+1}{\sqrt{\mu^4+\mu^2+1}}\widetilde{h}^+_1(\mu)d\mu\\
    &:=I_{1,1}+I_{1,2}+I_{1,3}.
\end{align*}
It  suffices to show the estimate for $I_1$.

For $j=1$, one has $\gamma_1^+ (\mu) = i\mu$,  according to Lemma \ref{R},
\begin{align*}
\|\eta I_{1,1}\|_{X^{s,b}}&\lesssim \|\eta W_{\R}(\Phi,0)\|_{X^{s,b}(\R^2)}\lesssim \|\Phi\|_{H^s(\R)}\lesssim \|h_1\|_{H^{\frac{s+1}{3}}(\R^+)},
\end{align*}
where $W_\R$ is defined in \eqref{v01} and \eqref{v02} with the Fourier transform function of $\Phi(x)$ defined by:
\begin{equation*}
\widehat{\Phi}(\mu)=\frac{\Delta_{j,m}(\mu)}{\Delta^+(\mu)}\frac{3\mu^4+2\mu^2+1}{\sqrt{\mu^4+\mu^2+1}}\widetilde{h}^+_1(\mu)
\chi_{(0,\infty)}(\mu).
\end{equation*}

For $j=2$, one has $\gamma_2^+ (\mu)=-p(\mu)-iq(\mu)$.
Let $f(x)=e^{-x}\theta(x)$ where $\theta$ is a cut-off function such that $\theta(x)=1$ for $x\geq0$ and $\theta(x)=0$ for $x<-1$.
 Thus, for $(x,t)\in \R^+\times \R^+$, let us write
\begin{align*}
 \eta(t)I_{1,2}(x,t)=\int_\R \Big(\eta(t)e^{i\phi(\mu)t}\Big)\Big(f\left(xp(x)\right)e^{-iq(\mu)x}\Big)
    &\frac{\Delta^+_{j,m}(\mu)}{\Delta^+(\mu)}\\
    &\cdot\frac{3\mu^4+2\mu^2+1}{\sqrt{\mu^4+\mu^2+1}}\widetilde{h}^+_1(\mu)d\mu.
    \end{align*}
 Since $\eta$ and $\theta$ are cut-off functions defined on $\R$, one can extend $\eta(t)I_{1,2}(x,t)$ to the entire plane, $\R\times \R$, such that
 \[ \widehat{\eta I}_{1,2}(\xi,\tau)  =\int_\R \widehat{\eta}(\tau-\phi(\mu)) \widehat{f}\left(\frac{\xi+q}{p}\right)\frac{1}{p}\frac{\Delta^+_{j,m}(\mu)}{\Delta^+(\mu)}\frac{3\mu^4+2\mu^2+1}{\sqrt{\mu^4+\mu^2+1}}\widetilde{h}^+_1(\mu)d\mu.
\]
Notice that $f$ is a Schwartz function with $$p=\sqrt{\sqrt{\mu^4+\mu^2+1}+\frac12\mu^2+\frac12}, \quad q=\sqrt{\sqrt{\mu^4+\mu^2+1}-\frac12\mu^2-\frac12},$$  one has
\begin{equation*}
\left|\widehat{f}\left(\frac{\xi+q}{p}\right)\right|\lesssim \frac{1}{1+(\xi+q)^3/p^3} \lesssim \frac{1+|\mu|^3}{1+ |\xi|^3+ k|\mu|^3},
\end{equation*}
for some $k>1$. Since $p\sim |\mu|$ as $\mu\rightarrow +\infty$ and $\langle\tau-\phi(\xi)\rangle\lesssim \langle\tau-\phi(\mu)\rangle\langle\phi(\mu)-\phi(\xi)\rangle$, applying Young's inequality (c.f. Lemma \ref{young}) follows
\begin{align*}
&\|\eta I_{1,2}\|_{X^{s,b}(\R^2)}\\
=&\left\|\langle \xi\rangle^s\langle \tau-\phi(\xi)\rangle^{b} \int_\R \widehat{\eta}(\tau-\phi(\mu)) \widehat{f}\left(\frac{\xi+q}{p}\right)\frac{1}{p}\frac{\Delta^+_{j,m}(\mu)}{\Delta^+(\mu)}\frac{3\mu^4+2\mu^2+1}{\sqrt{\mu^4+\mu^2+1}}\widetilde{h}^+_1(\mu)d\mu \right\|_{L^2_\xi L^2_\tau}\\
\lesssim& \left\|\int_\R \langle\tau-\phi(\mu)\rangle^b\widehat{\eta}(\tau-\phi(\mu))\frac{\langle\phi(\mu)-\phi(\xi)\rangle^b\langle \xi\rangle^s(1+|\mu|^3)}{1+ |\xi|^3+ k|\mu|^3}|\mu| \widetilde{h}^+_1(\mu)d\mu \right\|_{L^2_\xi L^2_\tau}\\
\lesssim& \left\|\int_\R \langle\tau-\phi(\mu)\rangle^b\widehat{\eta}(\tau-\phi(\mu))\left\|\frac{\langle\phi(\mu)-\phi(\xi)\rangle^b\langle \xi\rangle^s}{1+ |\xi|^3+ k|\mu|^3}\right\|_{L^2_\xi}(1+|\mu|^3)|\mu| \widetilde{h}^+_1(\mu)d\mu\right\|_{ L^2_\tau}\\
\lesssim& \left\|\int_\R \langle\tau-\phi(\mu)\rangle^b\widehat{\eta}(\tau-\phi(\mu)) |\mu|^{s-1}|\mu|^4\widetilde{h}^+_1(\mu)d\mu\right\|_{ L^2_\tau}\\
\lesssim& \left\|\int_\R \langle\tau-\zeta\rangle^b\widehat{\eta}(\tau-\zeta) |\zeta|^{\frac{s+1}{3}}\int^\infty_0 e^{i\zeta r}h_1(r)dr d\zeta\right\|_{ L^2_\tau}\lesssim \|h_1\|_{H^{\frac{s+1}{3}}(\R^+)}.
\end{align*}
The above bound holds because
\begin{align*}
\int_\R \frac{\langle\phi(\mu)-\phi(\xi)\rangle^{2b}\langle \xi\rangle^{2s}}{(1+ |\xi|^3+ k|\mu|^3)^2}d\xi&\lesssim \int_\R \frac{\langle |\mu|^3 +|\xi|^3\rangle^{2b} \langle \xi\rangle^{2s}}{(1+ |\xi|^3+ k|\mu|^3)^2}d\xi\\
&\lesssim \int_\R \frac{ \langle \xi\rangle^{2s}}{(1+ |\xi|^3+ k|\mu|^3)^{2-2b}}d\xi,
\end{align*}
and
\begin{align*}
&\int_{|\xi|\lesssim |\mu|} \frac{ \langle \xi\rangle^{2s}}{(1+ |\xi|^3+ k|\mu|^3)^{2-2b}}d\xi\lesssim |\mu|^{-3(2-2b)}\int_{|\xi|\lesssim |\mu|}\langle \xi\rangle^{2s}d\xi \lesssim |\mu|^{2s+6b-5},\\
&\int_{ |\xi|\gg  |\mu|} \frac{ \langle \xi\rangle^{2s}}{(1+ |\xi|^3+ k|\mu|^3)^{2-2b}}d\xi\lesssim \int_{ |\xi|\gg  |\mu|}| \xi| ^{2s+6b-6}d\xi \lesssim |\mu|^{2s+6b-5},
\end{align*}
for $s<1$ and $|\frac12-b|$ sufficient small. The proof for $j=3$ is similar to $j=2$, therefore omitted.
The proof is now complete.
\end{proof}
\subsubsection{Linear problem with homogeneous boundary conditions}
Next, let us estimate   the solution of the IBVP,
\begin{equation} \label{ln-2}
\begin{cases}
u_{tt}-u_{xx}+\beta u_{xxxx}-u_{xxxxxx}=0, \quad x>0,t>0,\\
u(x,0)=\varphi (x), u_t(x,0)=\psi ''(x), \\
u(0,t)=0, u_{xx}(0,t)=0, u_{xxxx}(0,t)=0.
\end{cases}
\end{equation}
According to Lemma \ref{nonf}, the solution of IBVP (\ref{ln-2}) can be written as
\[u(x,t)=[W_\R(\varphi^*,\psi^*)](x,t)-[W_{bdr}(\vec{p})](x,t),\]
where $\vec{p}(t)= (p_1(t), p_2(t), p_3 (t))$  with
\[ p_1 (t)= [W_\R(\varphi^*,\psi^*)](0,t), \quad p_2 (t)= \partial_{x}^2[W_\R(\varphi^*,\psi^*)](0,t), \]
and
\[ p_3 (t)=\partial_x^4[W_\R(\varphi^*,\psi^*)](0,t).\]
Hence, combining Propositions \ref{1}, \ref{2} and Lemmas \ref{R}, \ref{nonf}, one has the following statement.
\begin{proposition}\label{hi}
Given $s\leq 0$ and $\left|\frac12-b\right|$ sufficient small, for any $\varphi \in H^s (\R^+)$ and $\psi \in H^{s-1}(\R^+)$, the  corresponding solution $u$  of the IBVP (\ref{ln-2})
satisfies
\begin{align*}
     \sup_{t\geq0}\|u(\cdot,t)\|_{H^s(\R^+)}+ \sup_{x\geq0} \| \partial ^j_x u(x,\cdot)\|_{H^{\frac{s-j+1}{3}}(\mathbb{R}^+)}+&\|\eta(t) u(x,t)\|_{X^{s,b}}\\
     &\lesssim \|\varphi \|_{H^s(\mathbb{R}^+)}+\|\psi\|_{H^{s-1}(\mathbb{R}^+)},
\end{align*}
with $j=0,1,2,..,5$.
\end{proposition}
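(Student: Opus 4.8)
The plan is to read the estimate off directly from the representation supplied by Lemma \ref{nonf}, namely
\[
u(x,t) = [W_\R(\varphi^*,\psi^*)](x,t) - [W_{bdr}(\vec p)](x,t),
\]
and to bound the two summands separately, combining them at the end by the triangle inequality. I would begin by fixing \emph{bounded} extension operators $\varphi\mapsto\varphi^*$ and $\psi\mapsto\psi^*$ from $\R^+$ to $\R$, so that $\|\varphi^*\|_{H^s(\R)}\lesssim\|\varphi\|_{H^s(\R^+)}$ and $\|\psi^*\|_{H^{s-1}(\R)}\lesssim\|\psi\|_{H^{s-1}(\R^+)}$; the implied constants are then independent of the particular data.

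For the first summand $W_\R(\varphi^*,\psi^*)$ all three quantities on the left-hand side are controlled directly by Lemma \ref{R}: its three displayed inequalities bound, respectively, $\sup_{t\geq0}\|\cdot\|_{H^s(\R)}$, the Kato norms $\sup_{x\geq0}\|\partial_x^j(\cdot)\|_{H^{(s-j+1)/3}(\R)}$ for $j=0,\dots,5$, and $\|\eta(t)(\cdot)\|_{X^{s,b}(\R^2)}$, each by $\|\varphi^*\|_{H^s(\R)}+\|\psi^*\|_{H^{s-1}(\R)}$. Since restriction to $\R^+$ in space or in time is a contraction on the relevant (quotient) Sobolev norms, and since $\|\cdot\|_{X^{s,b}(\R^+\times\R^+)}\le\|\cdot\|_{X^{s,b}(\R^2)}$ by definition of the restricted Bourgain norm, the corresponding half-line quantities for $W_\R(\varphi^*,\psi^*)$ are dominated by $\|\varphi\|_{H^s(\R^+)}+\|\psi\|_{H^{s-1}(\R^+)}$, so this summand is handled.

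For the boundary correction $W_{bdr}(\vec p)$ the key observation is that the three trace functions $p_1=W_\R(\varphi^*,\psi^*)|_{x=0}$, $p_2=\partial_x^2 W_\R(\varphi^*,\psi^*)|_{x=0}$, $p_3=\partial_x^4 W_\R(\varphi^*,\psi^*)|_{x=0}$ land in \emph{exactly} the spaces required by Propositions \ref{1} and \ref{2}. Indeed, taking $j=0,2,4$ in the Kato smoothing of Lemma \ref{R} and using $\frac{s-2+1}{3}=\frac{s-1}{3}$ and $\frac{s-4+1}{3}=\frac{s-3}{3}$ gives
\[
\|p_1\|_{H^{\frac{s+1}{3}}(\R^+)}+\|p_2\|_{H^{\frac{s-1}{3}}(\R^+)}+\|p_3\|_{H^{\frac{s-3}{3}}(\R^+)}\lesssim\|\varphi^*\|_{H^s(\R)}+\|\psi^*\|_{H^{s-1}(\R)}.
\]
Feeding $\vec p$ into Proposition \ref{1} (for the two supremum norms) and Proposition \ref{2} (for the $X^{s,b}$-norm) then bounds every term of $W_{bdr}(\vec p)$ on the left-hand side by the right-hand side of those propositions, hence again by $\|\varphi\|_{H^s(\R^+)}+\|\psi\|_{H^{s-1}(\R^+)}$. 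Adding the two summands completes the argument.

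The proof is therefore essentially an assembly of results already in hand, and I do not expect a genuine analytic obstacle. The one point I would state explicitly—and the only place where care is needed—is the matching of the Sobolev indices: the boundary operators in Propositions \ref{1} and \ref{2} demand data precisely in $H^{(s+1)/3}$, $H^{(s-1)/3}$, $H^{(s-3)/3}$, and it is exactly the sharp Kato smoothing of the whole-line evolution at $j=0,2,4$ that delivers the traces $p_1,p_2,p_3$ in these spaces. This realizes the compatibility between necessary boundary regularity and achievable trace regularity emphasized in the introduction, and it is the reason the two summands can be glued together with no loss.
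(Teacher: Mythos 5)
Your proposal is correct and follows essentially the same route as the paper, which proves Proposition \ref{hi} precisely by combining the decomposition of Lemma \ref{nonf} with Lemma \ref{R} (for the whole-line evolution and the traces $p_1,p_2,p_3$ at $j=0,2,4$) and Propositions \ref{1} and \ref{2} (for the boundary correction $W_{bdr}(\vec p)$). The index matching you highlight is indeed the only point requiring care, and your write-up makes explicit what the paper leaves as a one-line assembly.
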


\subsubsection{Linear problem with forcing}
 Finally, let us estimate  the solution of the IBVP,
\begin{equation}\label{ln-3}
    \begin{cases}
    u_{tt}-u_{xx}+\beta u_{xxxx}-u_{xxxxxx}=g_{xx} (x,t),\quad x>0\mbox{, }t>0,\\
    u(x,0)=0, u_t(x,0)=0,\\
    u(0,t)=0, u_ {xx}(0,t)=0, u_{xxxx}(0,t)=0.
    \end{cases}
\end{equation}

To study the estimate of  IBVP \eqref{ln-3}, according to Lemma \ref{f1}, \ref{force} and Proposition \ref{1}, \ref{2}, \ref{hi}, it is necessary to establish the sharp Kato smoothing of the solution for the forced linear IVP.
\begin{equation}
\begin{cases}
u_{tt}-u_{xx}+\beta u_{xxxx}-u_{xxxxxx}=g_{xx}(x,t),\quad x\in \mathbb{R}\mbox{, }t>0,\\
u(x,0)=0, u_t(x,0)=0,
\end{cases}
\end{equation}

\begin{proposition}\label{k1}
 For $s\leq 0$ and $\frac12-b>0$ sufficient small,
\begin{align*}
&\sup_{x\geq0}\left\|\eta(t)\int^t_0 \partial_x^j[ W_\R(0,g)](x,t-\tau)d\tau\right\|_{H_t^{\frac{s-j+1}{3}}(\R)}\\
\lesssim & \begin{cases}
\left\|\left(\frac{\xi^2\widehat{g}}{\phi(\xi)}\right)^\vee\right\|_{X^{s,-b}},& j=0,3,\\
\left\|\left(\frac{\xi^2\widehat{g}}{\phi(\xi)}\right)^\vee\right\|_{X^{s,-b}}+\left\|\langle
\tau\rangle^{\frac{s}{3}}\int_D\langle\xi\rangle^{-2}\left|\frac{\xi^2\widehat{g}}{\phi(\xi)}\right|d\xi\right\|_{L^2_\tau},& j=1,4,\\
\left\|\left(\frac{\xi^2\widehat{g}}{\phi(\xi)}\right)^\vee\right\|_{X^{s,-b}}+\left\|\langle\tau
\rangle^{\frac{s-1}{3}}\int_D\langle\xi\rangle^{-1}\left|\frac{\widehat{g}}{\phi(\xi)}\right|d\xi\right\|_{L^2_\tau},& j=2,5,
\end{cases}
\end{align*}
where $D$ is the region $\{\xi\in \R:|\xi|^3\gg |\tau|, |\xi|\gtrsim 1\}$.
  \end{proposition}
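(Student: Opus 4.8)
The plan is to begin from the Duhamel representation $u(x,t)=\int_0^t[W_\R(0,g)](x,t-\tau)\,d\tau$ and use the explicit formula \eqref{v02} for $V_2=W_\R(0,\cdot)$. Taking the spatial Fourier transform and writing $\sin((t-\tau)\phi(\xi))$ as a combination of $e^{\pm i(t-\tau)\phi(\xi)}$, the spatial derivative becomes
\[
\widehat{\partial_x^j u}(\xi,t)=\frac{(i\xi)^j\xi^2}{2i\phi(\xi)}\int_0^t\Big(e^{i(t-\tau)\phi(\xi)}-e^{-i(t-\tau)\phi(\xi)}\Big)\widehat{g}(\xi,\tau)\,d\tau .
\]
Introducing the time-frequency variable $\lambda$ of $g$ and applying the elementary identity $\int_0^t e^{\pm i(t-\tau)\phi(\xi)}e^{i\tau\lambda}\,d\tau=\big(e^{it\lambda}-e^{\pm it\phi(\xi)}\big)/\big(i(\lambda\mp\phi(\xi))\big)$, I would rewrite each of the two pieces with this oscillatory kernel and multiply by the cut-off $\eta(t)$. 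Splitting the kernel across the resonant set $|\lambda\mp\phi(\xi)|\le1$ (where I Taylor-expand it into a convergent series of homogeneous evolutions $t^k\eta(t)e^{\pm it\phi(\xi)}$) and the nonresonant set $|\lambda\mp\phi(\xi)|>1$ (where the two exponentials separate into a free piece $\eta(t)e^{\pm it\phi(\xi)}$ and a genuinely forced piece $\eta(t)e^{it\lambda}$) reduces the problem to three model terms. Throughout, I would use Lemma~\ref{equ} to replace $\phi(\xi)$ by the KdV-type phase $|\xi|^3-\tfrac\beta2|\xi|$, together with the substitutions used in Propositions~\ref{1}--\ref{2}, so that $\langle\lambda\mp\phi(\xi)\rangle$ is comparable to $\langle|\lambda|-|\xi|^3\rangle$.

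The homogeneous and resonant model terms form the easy part. Each is a free evolution $\eta(t)e^{\pm it\phi(\xi)}$ (up to the harmless polynomial cut-offs $t^k\eta(t)$, controlled by Lemma~\ref{a}) applied to the datum whose spatial Fourier transform is $\int\langle\lambda\mp\phi(\xi)\rangle^{-1}\widehat G(\xi,\lambda)\,d\lambda$, where $\widehat G=\xi^2\widehat g/\phi(\xi)$. By the sharp Kato smoothing of Lemma~\ref{R} together with a Cauchy--Schwarz step in $\lambda$ that splits $\langle\lambda\mp\phi(\xi)\rangle^{-1}=\langle\lambda\mp\phi(\xi)\rangle^{-b}\langle\lambda\mp\phi(\xi)\rangle^{-(1-b)}$ (the integrability of the second factor requiring $b<\tfrac12$), these terms are bounded by $\|(\xi^2\widehat g/\phi)^\vee\|_{X^{s,-b}}$. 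This produces the common first term on the right-hand side for every $j$.

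The genuinely forced piece $\eta(t)\int e^{it\lambda}\langle\lambda\mp\phi(\xi)\rangle^{-1}(\cdots)\,d\lambda$ is the main obstacle. Here the relevant time frequency is $\lambda$ rather than $\phi(\xi)$, so the free smoothing of Lemma~\ref{R} is unavailable and one must estimate the Kato norm $\|\langle\tau\rangle^{(s-j+1)/3}\,\widehat{\partial_x^j u}(x,\tau)\|_{L^2_\tau}$ directly. I would split the $\xi$-integral at the set $D=\{|\xi|^3\gg|\tau|,\ |\xi|\gtrsim1\}$. Outside $D$ the time frequency dominates, $\langle\tau\rangle\sim\langle|\tau|-|\xi|^3\rangle$, and Cauchy--Schwarz in $\xi$ against the $X^{s,-b}$ weight absorbs the contribution into the first term. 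Inside $D$ one is far from resonance, $\langle|\tau|-\phi(\xi)\rangle\sim|\xi|^3$, and the derivative weight $|\xi|^j$ is only partially compensated by the Duhamel denominator $\phi(\xi)\sim|\xi|^3$; the surplus power is governed by the residue $j\bmod3$. For $j\equiv0$ the surplus is absorbable and no correction appears, while for $j\equiv1$ and $j\equiv2$ the $\xi$-integral over $D$ leaves precisely the stated low-time-frequency corrections. The delicate point is that a crude modulus bound on the $\xi$-integral diverges, so one must retain and exploit the oscillation in $\xi$ near the boundary $|\xi|^3\sim|\tau|$; carrying this out with the calculus estimates of Lemmas~\ref{tz1}--\ref{tz2} and Young's inequality (Lemma~\ref{young}), and tracking the exact powers of $\langle\xi\rangle$ and $\langle\tau\rangle$ through the change of variables, is where the real work lies and yields the correction terms recorded for $j=1,4$ and $j=2,5$.
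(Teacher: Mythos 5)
Your overall architecture for $j=0,1,2$ matches the paper's: Duhamel evaluated at $x=0$, the kernel $(e^{it\lambda}-e^{it\phi(\xi)})/(\lambda-\phi(\xi))$, the three-way split into a resonant piece (Taylor-expanded into $t^k\eta(t)e^{it\phi(\xi)}$ terms), a free nonresonant piece, and a forced nonresonant piece, with Cauchy--Schwarz and $b<\tfrac12$ closing the first two and producing the common $X^{s,-b}$ term. But your treatment of the forced piece on $D$ misreads the statement. You assert that ``a crude modulus bound on the $\xi$-integral diverges, so one must retain and exploit the oscillation in $\xi$''; in fact the correction terms on the right-hand side \emph{are} the crude modulus bounds. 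On $D$ one has $\langle\lambda-\phi(\xi)\rangle\sim|\xi|^3$, so for $j=1$ the integrand is pointwise $\lesssim\langle\xi\rangle^{-2}\bigl|\xi^2\widehat g/\phi(\xi)\bigr|$ with weight $\langle\tau\rangle^{s/3}$, which is verbatim the stated correction term; there is nothing further to prove inside this proposition (those terms are only controlled later, by the separate bilinear estimate of Proposition~\ref{t2}). Your plan to extract cancellation near $|\xi|^3\sim|\tau|$ is therefore both unnecessary and unsubstantiated --- you never say what the oscillatory argument would be, and your two sentences about $D$ contradict each other (the bound allegedly diverges, yet ``leaves precisely the stated corrections'').

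The more serious gap is $j=3,4,5$. The remark that ``the surplus power is governed by the residue $j\bmod3$'' is not a mechanism. If you run your direct argument for $j=4$, the $D$-contribution of the forced piece is
$\langle\tau\rangle^{(s-3)/3}\int_D|\xi|^4\langle\lambda-\phi(\xi)\rangle^{-1}|F|\,d\xi\sim\langle\tau\rangle^{(s-3)/3}\int_D\langle\xi\rangle\,|F|\,d\xi$ with $F=\xi^2\widehat g/\phi(\xi)$, and since $|\tau|\ll|\xi|^3$ on $D$ this is strictly larger than the stated correction $\langle\tau\rangle^{s/3}\int_D\langle\xi\rangle^{-2}|F|\,d\xi$; likewise $j=3$ would acquire a correction term that the statement does not permit. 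The paper closes this by the identity $\partial_x^3\int_0^t[W_\R(0,g)](x,t-\tau)\,d\tau=\partial_t\int_0^t[W_\R(0,g)](x,t-\tau)\,d\tau$ (Lemma~2.12 of \cite{96}), which converts three spatial derivatives into one time derivative and, after commuting $\partial_t$ past $\eta(t)$, reduces the $H^{(s-j+1)/3}_t$ estimate for $j\in\{3,4,5\}$ to the $H^{(s-j+4)/3}_t$ estimate for $j-3\in\{0,1,2\}$. Without this step your argument does not establish the claimed right-hand sides for the three highest derivatives.
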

  \begin{remark}
  We notice that,  unlike in Proposition \ref{2} and \ref{hi}, it is necessary to have the condition $b<\frac12$ for the estimates.
  \end{remark}

  \begin{proof}
Let us start by verify the estimate for $j=0$. It suffices to find the bound at $x=0$ since $X^{s,b}$ is independent of space translation. According to the definition of $W_{\R} $ in \eqref{v01} and \eqref{v02}, one can write,
  \begin{align*}
  \int^t_0 W_\R(0,g)(x,t-\tau)d\tau\Big|_{x=0}=&\int^t_0\int_\R \frac{e^{i(t-\tau)\phi(\xi)}-e^{-i(t-\tau)\phi(\xi)}}{2i\phi(\xi)}\xi^2\widehat{g}^x(\xi,\tau)d\xi d\tau\\
  =&\int^t_0\int_\R  \frac{\xi^2\left(e^{i(t-\tau)\phi(\xi)}-e^{-i(t-\tau)\phi(\xi)}\right)}{2i\phi(\xi)}\\
  &\cdot\left(\int_\R e^{i\lambda \tau}\widehat{g}(\xi,\lambda)d\lambda\right) d\xi d\tau\\
  :=&\frac{1}{2i}(V_1+V_2),
   \end{align*}
  where `` $\wedge_x$'' denotes the Fourier transform with respect to space and
  \begin{align*}
V_1&=\int_{\R^2}\frac{\xi^2e^{it\phi(\xi)}\widehat{g}(\xi,\lambda)}{\phi(\xi)}\left(\int^t_0 e^{i\tau (\lambda-\phi(\xi))}d\tau\right)d\lambda d\xi\\
&=-i\int_{\R^2}\frac{e^{it\lambda}-e^{it\phi(\xi)}}{\lambda-\phi(\xi)}F(\xi,\lambda)d\lambda d\xi,
\end{align*}
\begin{equation*}
V_2=-i\int_{\R^2}\frac{e^{it\lambda}-e^{-it\phi(\xi)}}{\lambda+\phi(\xi)}F(\xi,\lambda)d\lambda d\xi,
\end{equation*}
with
\[F(\xi,\lambda):=\frac{\xi^2\widehat{g}(\xi,\lambda)}{\phi(\xi)}.\]
It suffices to consider the estimate for $V_1$. Let $\theta$ be a cut-off function with $\theta=1$ on $[-1,1]$ and $supp (\theta )\subset (-2,2)$, and  denote $\theta^c=1-\theta$. Then,
\begin{align*}
V_1=&-i\int_{\R^2}\frac{e^{it\lambda}-e^{it\phi(\xi)}}{\lambda-\phi(\xi)}\theta(\lambda-\phi(\xi))F(\xi,\lambda)d\lambda d\xi-i\int_{\R^2}\frac{e^{it\lambda}}{\lambda-\phi(\xi)}\theta^c(\lambda-\phi(\xi))\\
&\cdot F(\xi,\lambda)d\lambda d\xi+i\int_{\R^2}\frac{e^{it\phi(\xi)}}{\lambda-\phi(\xi)}\theta^c(\lambda-\phi(\xi))F(\xi,\lambda)d\lambda d\xi\\
:=&-i(A_1+A_2-A_3).
\end{align*}
For $A_1$, using Taylor expansion, one has
\begin{equation*}
A_1=\int_{\R^2}-e^{i\lambda t}\sum^{\infty}_{k=0}\frac{(it)^{k}(\lambda-\phi(\xi))^{k-1}}{k!}\theta(\lambda-\phi(\xi))F(\xi,\lambda)d\lambda d\xi.
\end{equation*}
Hence,
\begin{align*}
&\|\eta(t)A_1\|_{H_t^{\frac{s+1}{3}}(\R)}\\
\lesssim& \sum^\infty_{k=0}\frac{\|t^k \eta(t)\|_{H^1(\R)}}{k!}\left\|\int_{\R^2}e^{i\lambda t}(\lambda-\phi(\xi))^{k-1}\theta(\lambda-\phi(\xi))F(\xi,\lambda) d\xi d\lambda\right\|_{H_t^{\frac{s+1}{3}}(\R)}\\
\lesssim & \sum^\infty_{k=0}\frac{1}{k!}\left\|\langle\lambda\rangle^{\frac{s+1}{3}}\int_{|\lambda-\phi(\xi)|\leq1}F(\xi,\lambda)d\xi d\lambda\right\|_{L^2_\lambda}\\
\lesssim & \left[\int_\R\langle\lambda\rangle^{\frac{2s+2}{3}}\Big(\int_{|\lambda-\phi(\xi)|\leq1} \langle\xi\rangle^{-2s}d\xi\Big)\Big(\int_{|\lambda-\phi(\xi)|\leq1}\langle\xi\rangle^{2s}\left(F\right)^2d\xi\Big)d\lambda \right]^\frac{1}{2}\\
\lesssim& \sup_{\lambda} \Big(\langle\lambda\rangle^{\frac{2s+2}{3}}\int_{|\lambda-\phi(\xi)|\leq1} \langle\xi\rangle^{-2s}d\xi\Big)^{\frac{1}{2}}  \left\|\check{F}\right\|_{X^{s,-b}}
:= K_1 \left\|\check{F}\right\|_{X^{s,-b}}.
\end{align*}
Then, it  suffices to bound the term $K_1$. To handle that, we consider the domain of the integral, $|\lambda-\phi(\xi)|\leq1$,   in either $|\xi|, |\lambda| \lesssim 1$ or $|\xi|, |\lambda| \gg 1$. The former case is readily to check. The later case can be verified by substituting $\mu=\phi(\xi)\sim |\xi|^3$, it follows,
\[\langle\lambda\rangle^{\frac{2s+2}{3}}\int_{-1+|\lambda|<|\mu|<1+|\lambda|} \langle\mu\rangle^{-\frac{2s+2}{3}}d\mu\lesssim \langle\lambda\rangle^{\frac{2s+2}{3}}\langle\lambda\rangle^{-\frac{(2s+2)}{3}}\lesssim 1, \quad \mbox{for }  |\lambda| \gg 1.\]

For $A_2$, one has,
\begin{align}
\|\eta(t)A_2\|_{H_t^{\frac{s+1}{3}}(\R)}\lesssim & \left\|\int_{\R^2}e^{it\lambda} \frac{\theta^c(\lambda-\phi(\xi))}{\lambda-\phi(\xi)}F(\xi,\lambda)d\xi d\lambda\right\|_{H_t^{\frac{s+1}{3}}(\R)}\nonumber\\
\lesssim &\left\|\langle\lambda\rangle^{\frac{s+1}{3}} \int_\R\frac{\theta^c(\lambda-\phi(\xi))}{\lambda-\phi(\xi)}F(\xi,\lambda)d\xi\right\|_{L^2_\lambda}\nonumber\\
\lesssim & \left\|\langle\lambda\rangle^{\frac{s+1}{3}} \int_\R\frac{1}{\langle\lambda-\phi(\xi)\rangle}F(\xi,\lambda)d\xi\right\|_{L^2_\lambda}\label{A2}\\
\lesssim & \bigg[\int_\R \langle\lambda\rangle^{\frac{2s+2}{3}}\Big(\int_\R\frac{1}{\langle\xi\rangle^{2s}\langle\lambda-\phi(\xi)\rangle^{2-2b}}d\xi\Big)
\bigg(\int_\R\langle\xi\rangle^{2s}\nonumber\\
&\cdot\langle\lambda-\phi(\xi)\rangle^{-2b}\left(F(\xi,\lambda)\right)^2d\xi\bigg) d\lambda\bigg]^\frac{1}{2}\nonumber\\
\lesssim &\sup_{\lambda}\left(\langle\lambda\rangle^{\frac{2s+2}{3}}\int_\R\frac{1}{\langle\xi\rangle^{2s}\langle\lambda-\phi(\xi)\rangle^{2-2b}}
d\xi\right)^{\frac{1}{2}} \|\check{F}\|_{X^{s,-b}}\nonumber\\
:= &K_2  \|\check{F}\|_{X^{s,-b}}\nonumber.
\end{align}
Then, it suffices to bound the term $K_2$.   Let us set $z=\phi(\xi)$ and $\frac{d\xi}{d z}\sim |\xi|^{-2}\sim |z|^{-\frac23}$ as $\xi\rightarrow \infty$. According to Lemma \ref{tz1} with $s\leq 0$ and $b<\frac12$,
\begin{equation}\label{A21}
K_2^2\lesssim \langle\lambda\rangle^{\frac{2s+2}{3}}  \int_\R \frac{1}{\langle z\rangle^{\frac{2s+2}{3}}\langle\lambda\pm z\rangle^{2-2b}}dz\lesssim \langle\lambda\rangle^{\frac{2s+2}{3}}\langle\lambda\rangle^{-\frac{2s+2}{3}}\lesssim 1.
\end{equation}

For $A_3$, substituting $\mu=\phi(\xi)$, for $b<\frac12$, it follows
\begin{align*}
\|\eta(t)A_3\|_{H_t^{\frac{s+1}{3}}(\R)}\lesssim &\left\|\int_{\R^2}\frac{e^{it\mu}}{\lambda-\mu}\theta^c(\lambda-\mu)F(\mu,\lambda) \mu^{-\frac23}d\mu d\lambda \right\|_{H_t^{\frac{s+1}{3}}(\R)}\\
\lesssim& \left\|\langle\mu\rangle^{\frac{s+1}{3}} \int_{\R}\frac{\theta^c(\lambda-\mu)}{\lambda-\mu}F(\mu,\lambda) \mu^{-\frac23} d\lambda\right\|_{L^2_\mu}\\
\lesssim& \left(\int_\R \langle\mu\rangle^{\frac{2s+2}{3}}\mu^{-\frac43}  \int_{\R}\frac{1}{\langle\lambda-\mu\rangle^{2b}}F^2(\mu,\lambda) d\lambda \int_\R  \frac{1}{\langle\lambda-\mu\rangle^{2-2b}} d\lambda d\mu\right)^{\frac12}\\
\lesssim& \left(\int_{\R^2} \langle\xi\rangle^{2s} \frac{1}{\langle\lambda-\phi(\xi)\rangle^{2b}}F^2(\xi,\lambda) d\lambda d\xi\right)^{\frac12}\lesssim \left\|\check{F}\right\|_{X^{s,-b}}.
\end{align*}

Next, we turn to show the estimates holds for $j=1$. Similar to the previous proof, with the same definition of $F(\xi,\lambda)$, let us write
  \begin{align*}
  &\int^t_0 \partial_x [W_\R(0,g)](x,t-\tau)d\tau\Big|_{x=0}\\
  =&\int^t_0\int_\R \frac{\xi\left(e^{i(t-\tau)\phi(\xi)}-e^{-i(t-\tau)\phi(\xi)}\right)}{2i\phi(\xi)} \xi^2\widehat{g}^x(\xi,\tau)d\xi d\tau\\
  =&\int^t_0\int_\R  \frac{\xi\left(e^{i(t-\tau)\phi(\xi)}-e^{-i(t-\tau)\phi(\xi)}\right)}{2i\phi(\xi)}\xi^2\left(\int_\R e^{i\lambda \tau}\widehat{g}(\xi,\lambda)d\lambda\right) d\xi d\tau\\
  :=&\frac{1}{2i}(W_1+W_2),
   \end{align*}
where
  \begin{align*}
W_1=-i\int_{\R^2}\frac{\xi\left(e^{it\lambda}-e^{it\phi(\xi)}\right)}{\lambda-\phi(\xi)}F(\xi,\lambda)d\lambda d\xi
\end{align*}
and
\begin{equation*}
W_2=-i\int_{\R^2}\frac{\xi\left(e^{it\lambda}-e^{it\phi(\xi)}\right)}{\lambda-\phi(\xi)}F(\xi,\lambda)d\lambda d\xi.
\end{equation*}
Again, it suffices to verify the estimate for $W_1$ with
\begin{align*}
W_1=&-i\int_{\R^2}\frac{\xi\left(e^{it\lambda}-e^{it\phi(\xi)}\right)}{\lambda-\phi(\xi)}\theta(\lambda-\phi(\xi))F(\xi,\lambda)d\lambda d\xi\\
-i&\int_{\R^2}\frac{\xi e^{it\lambda}}{\lambda-\phi(\xi)}\theta^c(\lambda-\phi(\xi))F(\xi,\lambda)d\lambda d\xi\\
+i&\int_{\R^2}\frac{\xi e^{it\phi(\xi)}}{\lambda-\phi(\xi)}\theta^c(\lambda-\phi(\xi))F(\xi,\lambda)d\lambda d\xi\\
:&=-i(B_1+B_2-B_3).
\end{align*}
The proof for $B_1$ and $B_3$ are similar to the one for $A_1$ and $A_3$. For $B_2$, it follows from similar process as in \eqref{A2} that
\begin{align*}
\|\eta(t)B_2\|_{H_t^{\frac{s}{3}}(\R)}\lesssim  \left\|\langle\lambda\rangle^{\frac{s}{3}} \int_\R\frac{\xi}{\langle\lambda-\phi(\xi)\rangle}F(\xi,\lambda)d\xi\right\|_{L^2_\lambda}.
\end{align*}
For $\xi\in D$, since $|\xi|^3\gg |\lambda|$ and $|\xi|\gtrsim 1$, $\langle\lambda-\phi(\xi)\rangle\sim |\xi|^{3}$, then the result follows. For $\xi\not\in D$, that is, either $|\xi|^3\lesssim |\lambda|$ or $|\xi|\lesssim 1$.  Repeating the process in $A_2$ (c.f. \eqref{A2})  arrives at the bound,
\[
\|\eta(t)B_2\|_{H_t^{\frac{s+1}{3}}(\R)}\lesssim \sup_{\lambda}\left(\langle\lambda\rangle^{\frac{2s}{3}}\int_\R\frac{|\xi|^2}{\langle\xi\rangle^{2s}\langle\lambda-\phi(\xi)\rangle^{2-2b}}
d\xi\right)^{\frac{1}{2}} \|\check{F}\|_{X^{s,-b}}.\]
\begin{itemize}
  \item If  $|\xi|^3\lesssim |\lambda|$,   it leads to $|\xi|^2\lesssim |\lambda|^{\frac23}$ and
\[\langle\lambda\rangle^{\frac{2s}{3}}\int_\R\frac{|\xi|^2}{\langle\xi\rangle^{2s}\langle\lambda-\phi(\xi)\rangle^{2-2b}}
d\xi\lesssim \langle\lambda\rangle^{\frac{2s+2}{3}}\int_\R\frac{1}{\langle\xi\rangle^{2s}\langle\lambda-\phi(\xi)\rangle^{2-2b}}
d\xi.\]
The bound follows exact same as \eqref{A21}.
  \item If  $|\xi|\lesssim 1$,  it leads to  $|\xi|^2\lesssim 1$ and
\[\langle\lambda\rangle^{\frac{2s}{3}}\int_\R\frac{|\xi|^2}{\langle\xi\rangle^{2s}\langle\lambda-\phi(\xi)\rangle^{2-2b}}
d\xi\lesssim \langle\lambda\rangle^{\frac{2s}{3}}\int_\R\frac{1}{\langle\xi\rangle^{2s}\langle\lambda-\phi(\xi)\rangle^{2-2b}}
d\xi.\]
 The bound is obtained  similar to \eqref{A21}.
\end{itemize}

Similar proof can be carried out on  the estimate for $j=2$.

Finally, let us consider the proof for $j=3,4,5$. It has shown (c.f.  Lemma 2.12 in \cite{96}) that
\begin{equation*}
 \partial_x^3\left(\int^t_0[W_\R(0,g)](x,t-\tau)d\tau\right)=\partial_t\left(\int^t_0[W_\R(0,g)](x,t-\tau)d\tau\right),
\end{equation*}
thus,
\begin{equation*}
\partial_x^4\left(\int^t_0[W_\R(0,g)](x,t-\tau)d\tau\right)=\partial_t\left(\int^t_0[\partial_x W_\R(0,g)](x,t-\tau)d\tau\right),
\end{equation*}
\begin{equation*}
\partial_x^5\left(\int^t_0[W_\R(0,g)](x,t-\tau)d\tau\right)=\partial_t\left(\int^t_0[\partial_x^2 W_\R(0,g)](x,t-\tau)d\tau\right).
\end{equation*}
Since $\eta(t)$ is a cut-off function with $\eta\in C^\infty(\R)$, it follows that  for $j=3,4,5$,
\begin{align*}
  &\left\| \eta(t)\int^t_0 \partial_x^j W_{\mathbb{R}}(0,g)(x,t-\tau)d\tau\right\|_{H_t^{\frac{s-j+1}{3}}(\R)}\\
  = &\left\| \eta(t)\partial_t\left(\int^t_0 \partial_x^{j-3} W_{\mathbb{R}}(0,g)(x,t-\tau)d\tau\right)\right\|_{H_t^{\frac{s-j+1}{3}}(\R)}\\
  \lesssim & \left\| \eta(t)\int^t_0 \partial_x^{j-3} W_{\mathbb{R}}(0,g)(x,t-\tau)d\tau\right\|_{H_t^{\frac{s-j+4}{3}}(\R)},
\end{align*}
 these will reduce the proof to the case $j=0,1,2$. The proof is now complete.
\end{proof}
\section{Bilinear Estimates}
As pointed out in the introduction as well as in the remark of   Proposition \ref{k1}, estimates on $X^{s,b}$ for the forced problem only works when $b<\frac12$.  However, the bilinear estimate for the IVP of the sixth order Boussinesq equation on $X^{s,b}$  is  only valid for $\frac12<b<1$ (c.f. \cite{34}). Therefore, it is necessary to  derive  a proper bilinear estimate for $b<\frac12$ in order to establish a contraction mapping.
\begin{proposition}\label{t1}
For  $-\frac12< s\leq 0$, $\frac12-b>0 $ sufficient small,
\[\left\|\left(\frac{\xi^2\widehat{uv}}{\phi{(\xi})}\right)^\vee\right\|_{X^{s,-b}}\lesssim \|u\|_{X^{s,b}}\|v\|_{X^{s,b}}
.\]
\end{proposition}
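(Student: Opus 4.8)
The plan is to prove this by duality together with the standard Bourgain-space resonance analysis, adapted to the even dispersion relation and to the range $b<\frac12$. First I would dispose of the multiplier $\frac{\xi^2}{\phi(\xi)}$: since $\phi(\xi)=\sqrt{\xi^6+\beta\xi^4+\xi^2}\sim|\xi|^3$ for large $\xi$ and $\sim|\xi|$ for small $\xi$, one has the uniform bound $\frac{\xi^2}{\phi(\xi)}\lesssim\langle\xi\rangle^{-1}$, so it suffices to prove
\[\left\|\langle\xi\rangle^{s-1}\langle|\tau|-\phi(\xi)\rangle^{-b}\widehat{uv}\right\|_{L^2_{\xi,\tau}}\lesssim\|u\|_{X^{s,b}}\|v\|_{X^{s,b}}.\]
Using the norm equivalence in Lemma \ref{equ}, I would replace $\phi(\xi)$ by the KdV-type phase $|\xi|^3+\frac{\beta}{2}|\xi|$, so that the modulation variables become $\sigma=|\tau|-|\xi|^3-\frac\beta2|\xi|$ and, for $u,v$, $\sigma_i=|\tau_i|-|\xi_i|^3-\frac\beta2|\xi_i|$ with $\xi=\xi_1+\xi_2$, $\tau=\tau_1+\tau_2$. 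Setting $f=\langle\xi\rangle^s\langle\sigma_1\rangle^b|\widehat u|$ and $g=\langle\xi\rangle^s\langle\sigma_2\rangle^b|\widehat v|$, so that $\|f\|_{L^2}=\|u\|_{X^{s,b}}$ and $\|g\|_{L^2}=\|v\|_{X^{s,b}}$, and dualizing against $h\in L^2_{\xi,\tau}$, the estimate reduces to bounding the trilinear form $\iint M\,f\,g\,h$ uniformly, where
\[M=\frac{\langle\xi\rangle^{s-1}}{\langle\xi_1\rangle^s\langle\xi_2\rangle^s\,\langle\sigma\rangle^b\langle\sigma_1\rangle^b\langle\sigma_2\rangle^b}.\]

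The engine of the argument is the algebraic resonance relation. Because the dispersion is even and the weights involve $|\tau|$, I would first split the domain according to the signs of $\tau,\tau_1,\tau_2$, as in Farah \cite{34}; in the sign-coherent case the quantity $|\xi|^3-|\xi_1|^3-|\xi_2|^3$ reduces, on the support, to something comparable to $\xi\xi_1\xi_2$ (the lower-order $\frac\beta2|\xi|$ corrections being negligible), which yields
\[\max\bigl(\langle\sigma\rangle,\langle\sigma_1\rangle,\langle\sigma_2\rangle\bigr)\gtrsim\langle\xi\,\xi_1\,\xi_2\rangle,\]
whereas the sign-incoherent configurations force a strictly larger lower bound and are therefore easier. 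I would then dichotomize on which modulation realizes the maximum. In each branch I pull out, in Cauchy--Schwarz, exactly the factor among $f,g,h$ attached to the variable carrying the largest modulation; that modulation weight then factors out of the inner integral and supplies the frequency gain $\langle\xi\xi_1\xi_2\rangle^{-b}$, while the remaining two modulation weights are integrated in the corresponding time-frequency variable. That inner integration is a convolution of two factors $\langle\cdot\rangle^{-2b}$, which converges by Lemma \ref{tz1} precisely when $4b>1$; since $b$ is taken close to $\frac12$, this is harmless.

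What remains after the time integration is a purely spatial integral, and this is where the thresholds are forced and where the main difficulty lies. After distributing the gain one is led to control
\[\sup_{\xi}\ \langle\xi\rangle^{2s-2}\int_{\R}\frac{\langle\xi\,\xi_1\,\xi_2\rangle^{-2b}}{\langle\xi_1\rangle^{2s}\langle\xi_2\rangle^{2s}}\,d\xi_1,\qquad \xi_2=\xi-\xi_1,\]
together with its analogues in the other branches. Near $\xi_1=0$ or $\xi_2=0$ the integrand behaves like $|\xi_i|^{-2b}$, which is locally integrable exactly when $b<\frac12$ — this is the source of the constraint $b<\frac12$ flagged in the remark to Proposition \ref{k1}. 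The decay at high frequencies, and especially the delicate high-high-to-low interaction in which $|\xi|$ is small while $|\xi_1|\sim|\xi_2|$ are large, is where the hypothesis $s>-\frac12$ enters; here the extra derivative gain $\langle\xi\rangle^{-1}$ coming from $\frac{\xi^2}{\phi(\xi)}$ is essential, and explains why the admissible threshold improves on the KdV value. I expect the careful bookkeeping of these frequency regions — partitioning into high-high, high-low, and low-low interactions and applying Lemmas \ref{tz1} and \ref{tz2} in each — to be the principal obstacle, whereas the duality setup and the time integration are routine once the resonance bound is established.
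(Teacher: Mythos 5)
Your plan follows essentially the same route as the paper's proof: duality, the multiplier bound $\xi^2/\phi(\xi)\lesssim\langle\xi\rangle^{-1}$, the switch to the KdV-type phase via Lemma \ref{equ}, splitting according to the signs of $\tau$, $\tau_1$, $\tau-\tau_1$, Cauchy--Schwarz and Young's inequality to reduce matters to a supremum of an $L^2$ norm of the multiplier, Lemma \ref{tz1} to carry out the $\tau_1$-integration, and a resonance-based case analysis of the remaining spatial integral using Lemmas \ref{tz1} and \ref{tz2}, with the high-high-to-low interaction ($|\xi_1|\sim|\xi-\xi_1|\gg|\xi|$) being exactly where $s>-\tfrac12$ is forced and where the extra $\langle\xi\rangle^{-1}$ gain is spent, as you predict. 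One correction: the constraint $b<\tfrac12$ is not generated by this estimate --- the bracket $\langle\xi\xi_1(\xi-\xi_1)\rangle^{-2b}$ has no singularity at $\xi_1=0$, and the paper's argument only needs $b$ close to $\tfrac12$; rather, $b<\tfrac12$ is imposed externally by the linear Duhamel estimates of Proposition \ref{k1}, and the content of Proposition \ref{t1} is that the bilinear bound survives when $b$ is pushed below $\tfrac12$, where the estimate of \cite{34} no longer applies.
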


\begin{proposition}\label{t2}
For $-\frac12<s\leq 0$, $\frac12 - b>0$ sufficient small,
\[\left\|\langle\tau\rangle^{\frac{s-j+1}{3}}\int_D\langle\xi\rangle^{j-3}\left|\frac{\xi^2\widehat{uv}}{\phi(\xi)}\right|d\xi\right\|_{L^2_\tau}
\lesssim \|u\|_{X^{s,b}} \|v\|_{X^{s,b}}, \quad j=1,2,\]
where $D:= \{\xi\in \R:|\xi|^3\gg |\tau|, |\xi|\gtrsim 1\}$.
\end{proposition}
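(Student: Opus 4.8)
The plan is to prove both estimates ($j=1,2$) at the level of space--time Fourier transforms, exploiting that on $D$ one has $|\xi|\gtrsim 1$ and $|\tau|\ll|\xi|^3$. First I would record the two pointwise reductions valid on $D$: since $\phi(\xi)\sim|\xi|^3$ there, $\frac{\xi^2}{\phi(\xi)}\lesssim\langle\xi\rangle^{-1}$, so the multiplier $\langle\xi\rangle^{j-3}\frac{\xi^2}{\phi(\xi)}$ is dominated by $\langle\xi\rangle^{j-4}$; and $\langle\tau\rangle\lesssim\langle\xi\rangle^3$, which couples the temporal weight $\langle\tau\rangle^{\frac{s-j+1}{3}}$ to the spatial decay of the trace integral. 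Writing $\widehat{u}(\xi,\tau)=f(\xi,\tau)/(\langle\xi\rangle^s\langle|\tau|-\phi(\xi)\rangle^b)$ and likewise $g$ for $v$, so that $\|f\|_{L^2}=\|u\|_{X^{s,b}}$ and $\|g\|_{L^2}=\|v\|_{X^{s,b}}$, I would substitute $\widehat{uv}=\widehat{u}\ast\widehat{v}$ and dualize the outer $L^2_\tau$ norm against a nonnegative $\psi$ with $\|\psi\|_{L^2_\tau}=1$. This converts the claim into the uniform boundedness of a trilinear form in $(\psi,f,g)$ whose kernel is supported on $D$. Note that, unlike Proposition \ref{t1}, there is no output modulation weight $\langle|\tau|-\phi(\xi)\rangle^{-b}$ to help; the role of $D$ is precisely to compensate for its absence.

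The structural input that makes the estimate work is an off-characteristic/resonance identity. With $\xi=\xi_1+\xi_2$, $\tau=\tau_1+\tau_2$, and $\sigma_i=|\tau_i|-\phi(\xi_i)$, the difference or sum of the input modulations (according to the signs of $\tau_1,\tau_2$) equals $\tau\mp(\phi(\xi_1)\pm\phi(\xi_2))$. On $D$ the output time frequency is negligible next to $|\xi|^3$, so this modulation gap is comparable to $|\phi(\xi_1)-\phi(\xi_2)|\sim|\xi_1|^2|\xi|$ in the high--high regime and to $\phi(\xi_1)+\phi(\xi_2)$ in the low-output regime; in either case it forces $\max(\langle\sigma_1\rangle,\langle\sigma_2\rangle)$ to be large, which is exactly the smoothing needed to absorb the negative Sobolev index. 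Concretely I would perform the $\tau_1$-integration first, using Lemma \ref{tz1} with $\alpha=\beta=2b$ (so $4b>1$) to gain a factor $\langle\mathrm{gap}\rangle^{-(4b-1)}$, then the $\xi_1$-integration via Lemmas \ref{tz1}--\ref{tz2}; the leftover $\xi$-integral over $D$ converges because of the strong weight $\langle\xi\rangle^{j-4}$, its growth being absorbed by $\langle\tau\rangle^{\frac{s-j+1}{3}}$ through $\langle\tau\rangle\lesssim\langle\xi\rangle^3$.

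The delicate point is that the Cauchy--Schwarz splitting must be arranged so as \emph{not} to discard the modulation gain: a crude double Cauchy--Schwarz that replaces the bilinear convolution by $\int f^2g^2$ loses a power and only closes for $s>-\frac14$. The main obstacle is therefore the high--high-to-low interaction — both inputs at frequency $\sim N$ and output frequency $O(1)$ — near the endpoint $s=-\frac12$, where the entire gain must be extracted from the large input modulation dictated by $D$; the exponents then balance only for a lower bound on $b$ that tends to $\frac12$ as $s\to-\frac12^{-}$ but stays strictly below $\frac12$ whenever $s>-\frac12$, which is what ``$\frac12-b>0$ sufficiently small'' provides. I would treat this regime by pairing the Cauchy--Schwarz so that the factor $\langle\sigma_2\rangle^{-b}$ on the far-from-characteristic input is retained against $g$ rather than integrated away, carry out the remaining one-dimensional integrals sharply with Lemmas \ref{tz1}--\ref{tz2}, and finally fix $\frac12-b$ small depending on $s$. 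The same scheme covers $j=1$ and $j=2$, the only change being the bookkeeping of the powers $\langle\xi\rangle^{j-4}$ and $\langle\tau\rangle^{\frac{s-j+1}{3}}$.
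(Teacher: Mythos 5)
Your overall architecture coincides with the paper's: dualize, reduce via Cauchy--Schwarz and Young's inequality to a $\sup$ of a weighted $L^2$ norm of the kernel, split according to the signs of $\tau_1$ and $\tau-\tau_1$ and the relative sizes of $|\xi|,|\xi_1|$, perform the $\tau_1$-integration first with Lemma \ref{tz1} (with $\alpha=\beta=2b$, gaining $\langle\cdot\rangle^{-(4b-1)}$ on the resonance function), and use the restriction to $D$ to trade the temporal weight for spatial decay. You also correctly identify the high--high-to-low interaction with $\tau_1(\tau-\tau_1)<0$ as the only regime where the crude reduction fails. Up to that point the proposal tracks the paper's Cases 1 and 2 (including the changes of variables $z=$ resonance function and Lemma \ref{tz2}).

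The gap is in your treatment of that delicate regime. Your prescription --- ``retain the factor $\langle\sigma_2\rangle^{-b}$ against $g$ rather than integrate it away'' --- does not by itself produce any gain: $\|\langle\sigma_2\rangle^{-b}\widehat{g}\|_{L^2}\le\|\widehat{g}\|_{L^2}$, while the kernel then loses the corresponding $\langle\sigma_2\rangle^{-2b}$ decay in its $L^2$ estimate, so the exponents are no better than in the crude splitting. What actually closes this case in the paper is a different redistribution: the duality is set up with the inner integration in $(\xi,\tau)$ and the outer $L^2$ in $(\xi_1,\tau_1)$, a spatial weight $\langle\xi\rangle^{\frac12+}$ is inserted into the kernel with the compensating $\langle\xi\rangle^{-\frac12-}$ attached to the dual function $h(\tau)$ (which depends only on $\tau$, so $\|h\langle\xi\rangle^{-\frac12-}\|_{L^2_{\xi,\tau}}=\|\langle\xi\rangle^{-\frac12-}\|_{L^2_\xi}\|h\|_{L^2_\tau}$ is finite), and the temporal weight is exploited by integrating $\langle\tau\rangle^{\frac{-2r-2}{3}}$ over $|\tau|\ll|\xi|^3$ to produce the decisive factor $\langle\xi\rangle^{1-2r}$ (this step is exactly where $r=-s<\frac12$ enters). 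Combined with the resonance bound $\langle\tau\pm(|\xi-\xi_1|^3-|\xi_1|^3+\cdots)\rangle\sim\langle\xi\rangle\langle\xi_1\rangle^2$ this yields $\langle\xi_1\rangle^{2r-6b-1+}$, which is bounded. Your sketch gestures at the $\langle\tau\rangle\lesssim\langle\xi\rangle^3$ mechanism but does not carry out this computation, and the device you name in its place would not deliver it; as written the argument does not close at the endpoint regime $s$ near $-\frac12$.
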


\begin{proof}
\textbf{(Proposition \ref{t1})} For simplicity,  there is no harm to assume  $\beta=1$. In addition,
in order to have a better cancellation, the equivalence relation in norms is adapted for the norm in $X^{s,b}$ (c.f. Lemma \ref{equ}).
Let $u,v\in X^{s,b}$, $w\in X^{-s,b}$ be given, we  define
\[
f(\xi,\tau)=\langle\xi\rangle^{s}\langle|\tau|-|\xi|^3-\frac12|\xi|\rangle^b\widehat{u}(\xi,\tau),\ \ \ \ g(\xi,\tau)=\langle\xi\rangle^{s}\langle|^s\tau|-|\xi|^3-\frac12|\xi|\rangle^b\widehat{v}(\xi,\tau),
\]
\[
h(\xi,\tau)=\langle\xi\rangle^{-s}\langle|\tau|-|\xi|^3-\frac12|\xi|\rangle^b\overline{\widehat{w}(\xi,\tau)}.
\]
According to duality, it suffices to show
\begin{equation}\label{ww}
|W(f,g,h)|\lesssim \|f\|_{L^2_{\xi,\tau}}\|g\|_{L^2_{\xi,\tau}}\|h\|_{L^2_{\xi,\tau}},
\end{equation}
where
\begin{align}
W(f,g,h)
=&\int_{\R^4}\frac{|\xi|^2\langle\xi\rangle^{s}\langle\xi_1\rangle^{-s}\langle \xi-\xi_1\rangle^{-s}f(\xi_1,\tau_1)h(\xi,\tau)}{\phi(\xi)\langle|\tau|-|\xi|^3-\frac12|\xi|\rangle^b \langle|\tau_1|-|\xi_1|^3-\frac12|\xi_1|\rangle^b }\nonumber\\
&\cdot\frac{g(\xi-\xi_1,\tau-\tau_1)}{\langle|\tau-\tau_1|-|\xi-\xi_1|^3-\frac12|\xi-\xi_1|\rangle^b}d\xi d\tau d\xi_1 d\tau_1\nonumber\\
:=&\int_{\R^4}M (\xi,\xi_1,\tau,\tau_1) f(\xi_1,\tau_1)g(\xi-\xi_1,\tau-\tau_1)h(\xi,\tau)d\xi d\tau d\xi_1 d\tau_1\label{M3}.
\end{align}
To establish the estimate \eqref{ww},  we adapt   a similar argument in   Tzirakis \cite{97,95} (also, c.f. Farah \cite{34})   for the Boussiensq-type equations and analysis the six possible cases for the sign of $\tau$, $\tau_1$ and $\tau-\tau_1$,
\begin{description}
\item[I] $\tau_1\geq 0$, $\tau-\tau_1\geq 0 $;
\item[II] $\tau_1\geq 0$, $\tau-\tau_1\leq 0 $, $\tau\geq 0$;
\item[III] $\tau_1\geq 0$, $\tau-\tau_1\leq 0 $, $\tau\leq 0$;
\item[IV] $\tau_1\leq 0$, $\tau-\tau_1\leq 0 $;
\item[V] $\tau_1\leq 0$, $\tau-\tau_1\leq 0 $, $\tau\leq 0$;
\item[VI] $\tau_1\leq 0$, $\tau-\tau_1\leq 0 $, $\tau\geq 0$.
\end{description}

For the estimate in (I), according to the Cauchy-Schwartz inequality, it suffices to verify,
\begin{align}
\Big\|\int_{\R^2} M (\xi,\xi_1,\tau,\tau_1) f(\xi_1,\tau_1)g(\xi-\xi_1,\tau-\tau_1)d\xi_1 d\tau_1&\Big\|_{L^2_{\xi,\tau}}\label{W1}\\
&\lesssim \|f\|_{L^2_{\xi,\tau}}\|g\|_{L^2_{\xi,\tau}}\nonumber.
\end{align}
Moreover, Using  the Cauchy-Schwartz and Young's (c.f. Lemma \ref{young}) inequalities, the left-hand side of \eqref{W1} is bounded by
\begin{align*}
&\left\|\|M\|_{L^2_{\xi_1,\tau_1}}\| f(\xi_1,\tau_1)g(\xi-\xi_1,\tau-\tau_1)\|_{L^2_{\xi_1,\tau_1}}\right\|_{L^2_{\xi,\tau}}\\
\lesssim& \left(\sup_{\xi,\tau}\|M\|_{L^2_{\xi_1,\tau_1}}\right) \| f(\xi_1,\tau_1)g(\xi-\xi_1,\tau-\tau_1)\|_{L^2_{\xi,\tau,\xi_1,\tau_1}}\\
=&  \left(\sup_{\xi,\tau}\|M\|_{L^2_{\xi_1,\tau_1}}\right) \|f^2*g^2\|^{\frac12}_{L^1_{\xi,\tau}}\lesssim  \left(\sup_{\xi,\tau}\|M\|_{L^2_{\xi_1,\tau_1}}\right)\|f\|_{L^2_{\xi,\tau}}\|g\|_{L^2_{\xi,\tau}}.
\end{align*}
Therefore, it suffices to bound the term $\|M\|_{L^2_{\xi_1,\tau_1}}$.  Set $r=-s\geq 0$, one has
\begin{align*}
\|M\|^2_{L^2_{\xi_1,\tau_1}}=\int_{\R^2}&\frac{\xi^4 \langle\xi\rangle^{-2r} }{\phi^2(\xi)\langle\tau-|\xi|^3-\frac12|\xi|\rangle^{2b}}\\
&\cdot \frac{\langle\xi-\xi_1\rangle^{2r}\langle\xi_1\rangle^{2r} }{\langle\tau-\tau_1
-|\xi-\xi_1|^3-\frac12|\xi-\xi_1|\rangle^{2b}\langle\tau_1-|\xi_1|^3-\frac12|\xi_1|\rangle^{2b}}d\xi_1d\tau_1
\end{align*}
Since $\langle p\rangle \langle q\rangle \gtrsim \langle p+q\rangle$, $\xi^4/\phi^2(\xi)\lesssim \langle\xi\rangle^{-2}$, $\frac12-b>0$ sufficient small and Lemma \ref{tz1}, it arrives at
\begin{align*}
\|M\|^2_{L^2_{\xi_1,\tau_1}}\lesssim&  \int_{\R^2}\frac{ \langle\xi\rangle^{-2r-2}\langle\xi_1\rangle^{2r}\langle\xi-\xi_1\rangle^{2r} d\xi_1d\tau_1}{\langle\tau_1-|\xi_1|^3-\frac12|\xi_1|\rangle^{2b}\langle\tau_1+|\xi-\xi_1|^3-|\xi|^3+\frac12|\xi-\xi_1|-\frac12|\xi|\rangle^{2b}}\\
\lesssim &  \int_{\R^2}\frac{ \langle\xi\rangle^{-2r-2}\langle\xi_1\rangle^{2r}\langle\xi-\xi_1\rangle^{2r} }{\langle|\xi-\xi_1|^3+|\xi_1|^3-|\xi|^3+\frac12|\xi-\xi_1|+\frac12|\xi_1|-\frac12|\xi|\rangle^{2b}}d\xi_1\\
=&  \int_{A_1+A_2}\frac{ \langle\xi\rangle^{-2r-2}\langle\xi_1\rangle^{2r}\langle\xi-\xi_1\rangle^{2r} }{\langle|\xi-\xi_1|^3+|\xi_1|^3-|\xi|^3+\frac12|\xi-\xi_1|+\frac12|\xi_1|-\frac12|\xi|\rangle^{2b}}d\xi_1\\
:=&N_1+N_2,
\end{align*}
where
\begin{equation*}
A_1=\{\xi_1\in \R: \langle\xi_1\rangle\langle\xi-\xi_1\rangle\lesssim\langle\xi\rangle\}\ \ \ \mbox{and}\ \ \  A_2=\{\xi_1\in \R: \langle\xi_1\rangle\langle\xi-\xi_1\rangle\gg\langle\xi\rangle\}.
\end{equation*}

 In $A_1$,  one has
\begin{align*}
N_1=&\int_{A_1}
\frac{ \langle\xi\rangle^{-2r-2}\langle\xi_1\rangle^{2r}\langle\xi-\xi_1\rangle^{2r} }{\langle|\xi-\xi_1|^3+|\xi_1|^3-|\xi|^3+\frac12|\xi-\xi_1|+\frac12|\xi_1|-\frac12|\xi|\rangle^{2b}}d\xi_1 \\
\lesssim & \langle\xi\rangle^{-2}\int \frac{1}{\langle|(\xi-\xi_1)^3+\xi_1^3|-|\xi|^3+\frac12|(\xi-\xi_1)+\xi_1|-\frac12|\xi|\rangle^{2b}}d\xi_1\\
=& \langle\xi\rangle^{-2}\int \frac{1}{\langle|\xi|(\xi^2-3\xi\xi_1+3\xi_1^2)-|\xi|\xi^2\rangle^{2b}}d\xi_1\\
\lesssim& \langle\xi\rangle^{-2}\int \frac{1}{\langle3\xi\xi_1(\xi-\xi_1)\rangle^{1-}}d\xi_1,
\end{align*}
since $\xi^2-3\xi\xi_1+3\xi_1^2\geq 0$. Set $x=3\xi\xi_1(\xi-\xi_1)$, it follows,
\[\xi_1=\frac{3\xi^2\pm \sqrt{9\xi^4-12\xi x}}{6\xi},\ \ \ \ \  \mbox{and}\ \ \ \ \ dx=\pm2|\xi|^{\frac12}\sqrt{9\xi^3-12x}d\xi_1.\]
According to Lemma \ref{tz2}, one has the following bound,
\[N_1\lesssim \langle\xi\rangle^{-\frac52} \int \frac{1}{\langle x\rangle^{1-}\sqrt{9\xi^3-12x}}dx\lesssim \langle\xi\rangle^{-\frac52} \langle\xi^3\rangle^{-\frac12+}\lesssim \langle\xi\rangle^{-4+}.\]

In $A_2$,  let us  consider the integral
\[N_2= \int_{A_2}\frac{ \langle\xi\rangle^{-2r-2}\langle\xi_1\rangle^{2r}\langle\xi-\xi_1\rangle^{2r} }{\langle|\xi-\xi_1|^3+|\xi_1|^3-|\xi|^3+\frac12|\xi-\xi_1|+\frac12|\xi_1|-\frac12|\xi|\rangle^{2b}}d\xi_1,\]
 in following split regions:
\begin{itemize}
\item $|\xi_1|\gg |\xi|$,
\item $|\xi|\gg |\xi_1| \gg 1$,
\item  $|\xi|\sim|\xi_1|\gg 1$.
\end{itemize}
Furthermore, for each region above, the estimate is considered  in  different  cases below,
\begin{description}
  \item[$(a)$] $B_1:=\{(\xi,\xi_1):\xi-\xi_1\geq 0, \xi_1\geq0, \xi\geq 0\}$;
  \item[$(b)$] $B_2:=\{(\xi,\xi_1):\xi-\xi_1\geq 0, \xi_1\leq0, \xi\geq 0\}$;
  \item[$(c)$] $B_3:=\{(\xi,\xi_1):\xi-\xi_1\geq 0, \xi_1\leq0, \xi\leq 0\}$;
  \item[$(d)$] $B_4:=\{(\xi,\xi_1):\xi-\xi_1\leq 0, \xi_1\leq0, \xi\leq 0\}$;
  \item[$(e)$] $B_5:=\{(\xi,\xi_1):\xi-\xi_1\leq 0, \xi_1\geq0, \xi\leq 0\}$;
  \item[$(f)$] $B_6:=\{(\xi,\xi_1):\xi-\xi_1\leq 0, \xi_1\geq0, \xi\geq 0\}$.
\end{description}
Notice that cases for $\{\xi-\xi_1\geq 0, \xi_1\geq 0, \xi\leq0 \}$ and $\{\xi-\xi_1\leq 0, \xi_1\leq 0, \xi\geq0 \}$ do not exist for $\xi$ and $\xi_1$ nonzero. In addition,  one only need to consider the estimate in cases $B_1$, $B_2$ and $B_3$ since the rest cases are equivalence to those.\\
\textbf{Case 1.} For $|\xi_1|\gg |\xi|$, one  has $\langle\xi-\xi_1\rangle\sim \langle\xi_1\rangle$. Notice that $B_1$ cannot contribute to this region. For both $B_2$ and $B_3$, it yields,
\begin{align*}
 &\langle|\xi-\xi_1|^3+|\xi_1|^3-|\xi|^3+\frac12|\xi-\xi_1|+\frac12|\xi_1|-\frac12|\xi|\rangle^{2b}\\
 =&\langle(\xi-\xi_1)^3-\xi_1^3\pm\xi^3+\frac12(\xi-\xi_1)-\frac12\xi_1\pm\frac12\xi\rangle^{2b} \sim\langle\xi_1^3+\xi_1\rangle^{1-}.
\end{align*}
Therefore,
\[N_2\lesssim \int_{|\xi_1|\gg |\xi|}\frac{ \langle\xi\rangle^{-2r-2}\langle\xi_1\rangle^{4r} }{\langle\xi_1^3+\xi_1\rangle^{1-}}d\xi_1. \]
 If $|\xi_1|\leq 1$, the bound is readily to check. If $|\xi_1|> 1$, then,
 \[N_2\lesssim \langle\xi \rangle^{-2r-2}\int_{|\xi_1|\gg|\xi| } \langle\xi_1\rangle^{4r-3+}d\xi_1\lesssim \langle\xi\rangle ^{2r-4+},\]
 which is  bounded if $r<  \frac12$.

 \textbf{Case 2.} For $|\xi|\gg |\xi_1| \gg 1$, one has $\langle\xi-\xi_1\rangle\sim \langle\xi\rangle$. For both $B_1$ and $B_2$, it yields,
\begin{align*}
 &\langle|\xi-\xi_1|^3+|\xi_1|^3-|\xi|^3+\frac12|\xi-\xi_1|+\frac12|\xi_1|-\frac12|\xi|\rangle^{2b}\\
 =&\langle(\xi-\xi_1)^3\pm\xi_1^3-\xi^3+\frac12(\xi-\xi_1)\pm\frac12\xi_1-\frac12\xi\rangle^{2b}\sim\langle\xi^2\xi_1\rangle^{1-} \sim \langle\xi\rangle^{2-}\langle\xi_1\rangle^{1-}.
\end{align*}
 Thus,
 \[N_2\lesssim \langle\xi \rangle^{-4+}\int_{|\xi_1|\ll |\xi|} \langle\xi_1\rangle^{2r-1+}d\xi_1\lesssim \langle\xi\rangle^{2r-4+},\]
 which is bounded if $r< 2$. While for $B_3$, it yields,
\begin{align*}
 &\langle|\xi-\xi_1|^3+|\xi_1|^3-|\xi|^3+\frac12|\xi-\xi_1|+\frac12|\xi_1|-\frac12|\xi|\rangle^{2b}\\
 =&\langle(\xi-\xi_1)^3-\xi_1^3+\xi^3+\frac12(\xi-\xi_1)-\frac12\xi_1+\frac12\xi\rangle^{2b}\sim\langle\xi^3\rangle^{1-} \sim \langle\xi\rangle^{3-}.
\end{align*}

 Thus, one has the  bound if $r< 2$, since
 \[N_2\lesssim \langle\xi\rangle^{-5+}\int_{|\xi_1|\ll |\xi|} \langle\xi_1\rangle^{2r}d\xi_1\lesssim |\xi|^{2r-4+}.\]
 \textbf{Case 3.} For $|\xi|\sim|\xi_1|\gg 1$,  recall that, in $A_2$, $ \langle\xi_1\rangle\langle\xi-\xi_1\rangle\gg\langle\xi\rangle$, this gives $|\xi-\xi_1|\gg1$. For all of $B_1$, $B_2$ and $B_3$, it yields,
 \begin{align*}
 &\langle|\xi-\xi_1|^3+|\xi_1|^3-|\xi|^3+\frac12|\xi-\xi_1|+\frac12|\xi_1|-\frac12|\xi|\rangle^{2b}\\
 \gtrsim &\langle|(\xi-\xi_1)^3+\xi_1^3|-|\xi|^3\rangle^{2b} \sim \langle\xi\xi_1(\xi-\xi_1)\rangle^{2b}\sim \langle\xi_1\rangle^{2-}\langle\xi-\xi_1\rangle^{1-}.
 \end{align*}
  Thus, for $r<\frac12$, i.e. $2r-1<0$, the bound follows as below,
 \[N_2\lesssim \langle\xi\rangle^{-2r-2}\int_{|\xi_1|\sim |\xi|} \langle\xi_1\rangle^{2r-2+}\langle\xi-\xi_1\rangle^{2r-1+} d\xi_1\lesssim  \langle\xi\rangle^{-3+}.\]
Therefore,  the desired estimate in (I) has shown.

 Next,  similar idea is applied to process (\textrm{II}). By exchange the role of $(\xi,\tau)$ and $(\xi_1,\tau_1)$,  it suffices to verify,
\[\left\|\int_{\R^2} M (\xi,\xi_1,\tau,\tau_1) g(\xi-\xi_1,\tau-\tau_1)h(\xi,\tau)d\xi d\tau\right\|_{L^2_{\xi_1,\tau_1}}\lesssim \|g\|_{L^2_{\xi_1,\tau_1}}\|h\|_{L^2_{\xi_1,\tau_1}},
\]
with $M$ defined in \eqref{M3}. Similar argument for (I) can be applied and it suffices to bound the following,
\begin{align*}
\|M\|^2_{L^2_{\xi,\tau}}\lesssim& \int_{A_3+A_4}\frac{ \langle\xi\rangle^{-2r-2}\langle\xi_1\rangle^{2r}\langle\xi-\xi_1\rangle^{2r} }{\langle|\xi-\xi_1|^3+|\xi_1|^3-|\xi|^3+\frac12|\xi-\xi_1|+\frac12|\xi_1|-\frac12|\xi|\rangle^{2b}}d\xi\\
:=& L_1+L_2,
\end{align*}
where
\begin{equation*}
A_3=\{\xi\in \R: \langle\xi_1\rangle\langle\xi-\xi_1\rangle\lesssim\langle\xi\rangle\}\ \ \ \mbox{and}\ \ \  A_4=\{\xi\in \R: \langle\xi_1\rangle\langle\xi-\xi_1\rangle\gg\langle\xi\rangle\}.
\end{equation*}

In $A_3$,  one has
\begin{align*}
L_1=&\int_{A_3}  \frac{ \langle\xi\rangle^{-2r-2}\langle\xi_1\rangle^{2r}\langle\xi-\xi_1\rangle^{2r}d\xi }{\langle|\xi-\xi_1|^3+|\xi_1|^3-|\xi|^3+\frac12|\xi-\xi_1|+\frac12|\xi_1|-\frac12|\xi|\rangle^{2b}} \\
\lesssim &\int_{A_3}  \frac{\langle\xi\rangle^{-2}d\xi}{\langle3\xi\xi_1(\xi-\xi_1)\rangle^{1-}}\lesssim \int_{A_3}  \frac{d\xi}{\langle3\xi\xi_1(\xi-\xi_1)\rangle^{1-}},
\end{align*}
 the rest of the proof is similar to the one for $A_1$ in (\textrm{I}).

In $A_4$,  the integration
\[L_2=\int_{A_4}  \frac{ \langle\xi\rangle^{-2r-2}\langle\xi_1\rangle^{2r}\langle\xi-\xi_1\rangle^{2r}d\xi }{\langle|\xi-\xi_1|^3+|\xi_1|^3-|\xi|^3+\frac12|\xi-\xi_1|+\frac12|\xi_1|-\frac12|\xi|\rangle^{2b}}, \]
 are considered by splitting the space $A_4$ into the same regions as for $A_2$  in (\textrm{I}):\\
 \textbf{Case 1.} For $|\xi_1|\gg |\xi|$, one has $\langle\xi-\xi_1\rangle\sim \langle\xi_1\rangle$. Notice that $B_1$ cannot contribute to this region. For both $B_2$ and $B_3$, it yields,
\begin{align*}
 &\langle|\xi-\xi_1|^3+|\xi_1|^3-|\xi|^3+\frac12|\xi-\xi_1|+\frac12|\xi_1|-\frac12|\xi|\rangle^{2b}\\
 =&\langle(\xi-\xi_1)^3-\xi_1^3\pm\xi^3+\frac12(\xi-\xi_1)-\frac12\xi_1\pm\frac12\xi\rangle^{2b} \sim\max\{\langle\xi_1^3\rangle^{1-}, \langle\xi_1\rangle^{1-}\}.
\end{align*}
For $|\xi_1|\leq 1$ the bound can be obtained directly. For $|\xi_1| >1$,
  we have the following bound with $r<2$,
 \[L_2\lesssim \langle\xi_1\rangle^{4r-3+}\int_{|\xi|\ll|\xi_1| }\langle\xi\rangle ^{-2r-2} d\xi\lesssim \langle\xi_1\rangle ^{2r-4+}.\]
\textbf{Case 2.} For $|\xi|\gg |\xi_1| \gg 1$, one has $\langle\xi-\xi_1\rangle\sim \langle\xi\rangle$. For both $B_1$ and $B_2$, it yields,
\begin{align*}
 &\langle|\xi-\xi_1|^3+|\xi_1|^3-|\xi|^3+\frac12|\xi-\xi_1|+\frac12|\xi_1|-\frac12|\xi|\rangle^{2b}\\
 =&\langle(\xi-\xi_1)^3\pm\xi_1^3-\xi^3+\frac12(\xi-\xi_1)\pm\frac12\xi_1-\frac12\xi\rangle^{2b}\sim\langle\xi^2\xi_1\rangle^{1-} \sim \langle\xi\rangle^{2-}\langle\xi_1\rangle^{1-}.
\end{align*}
 Thus,
 \[L_2\lesssim \langle\xi_1\rangle^{2r-1+}\int_{|\xi|\gg |\xi_1|} \langle\xi\rangle^{-4+}d\xi\lesssim \langle\xi_1\rangle^{2r-4+},\]
 which is bounded if $r<2$. While for $B_3$, it follows,
\begin{align*}
&\langle|\xi-\xi_1|^3+|\xi_1|^3-|\xi|^3+\frac12|\xi-\xi_1|+\frac12|\xi_1|-\frac12|\xi|\rangle^{2b}\\
\gtrsim&\langle(\xi-\xi_1)^3-\xi_1^3+\xi^3\rangle^{2b}\sim\langle\xi^3\rangle^{1-} \sim \langle\xi\rangle^{3-}.
\end{align*}
 Thus,
 \[L_2\lesssim \langle\xi_1\rangle^{2r}\int_{|\xi|\gg |\xi_1|}\langle\xi\rangle^{-5+} d\xi\lesssim \langle\xi_1\rangle^{2r-4+},\]
 which is bounded if $r<2$.\\
 \textbf{Case 3.} For $|\xi|\sim|\xi_1|\gg 1$, since in $A_4$, $\langle\xi_1\rangle\langle\xi-\xi_1\rangle\gg\langle\xi\rangle$, this gives $|\xi-\xi_1|\gg1$. For all of $B_1$, $B_2$ and $B_3$, it yields,
\begin{align*}
 &\langle|\xi-\xi_1|^3+|\xi_1|^3-|\xi|^3+\frac12|\xi-\xi_1|+\frac12|\xi_1|-\frac12|\xi|\rangle^{2b}\\
 \gtrsim &\langle|(\xi-\xi_1)^3+\xi_1^3|-|\xi|^3\rangle^{2b} \sim \langle\xi\xi_1(\xi-\xi_1)\rangle^{2b}\sim \langle\xi\rangle^{2-}\langle\xi-\xi_1\rangle^{1-}.
\end{align*}
 Thus, for $r<\frac12$, we have $2r-1<0$, and the bound follows,
 \[L_2\lesssim \int_{|\xi|\sim |\xi_1|} \langle\xi\rangle^{-4+}\langle\xi-\xi_1\rangle^{2r-1+} d\xi\lesssim \langle\xi_1\rangle^{-3+}.\]

 Finally,  the estimate in $(\textrm{III})$ can be established  by performing the change of variables
$(\xi_1,\tau_1)\rightarrow (\xi-\xi_1,\tau-\tau_1)$,   then it is reduced to the estimate in $(\textrm{I})$ for $M$.  The proof is now complete.
 \end{proof}

\begin{proof}
\textbf{(Proposition \ref{t2})}
Similar to Proposition \ref{t1}, by introducing
\[f(\xi,\tau)=\langle\xi\rangle^s\langle|\tau|-|\xi|^3-\frac12|\xi|\rangle^b\widehat{u}(\xi,\tau),\ \ \ g(\xi,\tau)=\langle\xi\rangle^s\langle|\tau|-|\xi|^3-\frac12|\xi|\rangle^b\widehat{v}(\xi,\tau),\]
and setting $r=-s\geq 0$, the desired estimate becomes
\begin{align}
\bigg\|\langle\tau\rangle^{\frac{-r-j+1}{3}}\int_{\Omega} & \frac{\langle\xi\rangle^{{j-4}}\langle\xi_1\rangle^{r}\langle\xi-\xi_1\rangle^rf(\xi_1,\tau_1)
g(\xi-\xi_1,\tau-\tau_1)d\xi_1 d\tau_1 d\xi}{\langle|\tau_1|-|\xi_1|^3-\frac12|\xi_1|\rangle^b\langle|\tau-\tau_1|-|\xi-\xi_1|^3-\frac12|\xi-\xi_1|\rangle^b}\bigg\|_{L^2_\tau}\nonumber\\
&\lesssim \|f\|_{L^2_{\xi,\tau}}\|g\|_{L^2_{\xi,\tau}},\label{M}
\end{align}
where $\Omega=\R^2\times D$.
Applying the Cauchy-Schwartz inequality to integral in $(\xi,\xi_1,\tau_1)$ space and the Young's inequality (c.f. Lemma \ref{young}) arrives at a bound,
\begin{align*}
&\left\|\|M\|_{L^2(\Omega)}\|f(\xi_1,\tau_1)g(\xi-\xi_1,\tau-\tau_1)\|_{L^2_{\xi,\xi_1,\tau_1}}\right\|_{L^2_\tau}\\
\lesssim& \sup_{\tau} \|M\|_{L^2(\Omega)} \|f^2*g^2\|^\frac12_{L^1_{\xi,\tau}}\\
\lesssim& \sup_{\tau} \|M\|_{L^2(\Omega)}\|f\|_{L^2_{\xi,\tau}}\|g\|_{L^2_{\xi,\tau}},
\end{align*}
where
\begin{equation}\label{M0}
M:=\frac{\langle\tau\rangle^{\frac{-r-j+1}{3}}\langle\xi\rangle^{{j-4}}\langle\xi_1\rangle^{r}
\langle\xi-\xi_1\rangle^r}{\langle|\tau_1|-|\xi_1|^3-\frac12|\xi_1|\rangle^b\langle|\tau-\tau_1|-|\xi-\xi_1|^3-\frac12|\xi-\xi_1|\rangle^b}.
\end{equation}

Following is the proof for $j=2$, and the one for $j=1$ can be obtained in a similar way, therefore ommited. In order to bound,
\begin{equation}\label{M1}
 \|M\|_{L^2(\Omega)}^2=\int_{\Omega} \frac{\langle\tau\rangle^{\frac{-2r-2}{3}}\langle\xi\rangle^{{-4}}\langle\xi_1\rangle^{2r}\langle\xi-\xi_1\rangle^{2r}d\xi d\xi_1 d\tau_1}
 {\langle|\tau_1|-|\xi_1|^3-\frac12|\xi_1|\rangle^{2b}\langle|\tau-\tau_1|-|\xi-\xi_1|^3-\frac12|\xi-\xi_1|\rangle^{2b}},
\end{equation}
 the integral domain, $\Omega$, is split into following regions:\\
\textbf{Case 1.} For $\tau_1(\tau-\tau_1)\geq 0$, since $r\geq 0$, one can drop the $\langle\tau\rangle $ term and apply Lemma \ref{tz1}. This leads to,
\[\|M\|_{L^2(\Omega)}^2\lesssim \int_{\R\times D} \frac{\langle\xi\rangle^{{-4}}\langle\xi_1\rangle^{2r}\langle\xi-\xi_1\rangle^{2r}}{\langle\tau\pm (|\xi_1|^3+|\xi-\xi_1|^3+\frac12|\xi_1|+\frac12|\xi-\xi_1|)\rangle^{4b-1}}d\xi d\xi_1 .\]
In addition,  for either $|\xi|\lesssim |\xi_1|$ or $|\xi|\gg |\xi_1|$, one always has $|\xi_1|^3+|\xi-\xi_1|^3\gtrsim \max{\{|\xi|^3,|\xi_1|^3\}}$ and $|\xi_1|+|\xi-\xi_1|\gtrsim \max{\{|\xi|,|\xi_1|\}}$. Recall that in the domain of $D$,  $|\xi|\gg \tau$ and $|\xi|\gtrsim 1$, then the above  bound is reduced to,
\[\|M\|_{L^2(\Omega)}^2\lesssim \int_{\R\times D} \frac{\langle\xi\rangle^{{-4}}\langle\xi_1\rangle^{2r}\langle\xi-\xi_1\rangle^{2r}}{(\max{\{\langle\xi\rangle,\langle\xi_1\rangle\}})^{12b-3}}d\xi d\xi_1.\]
One can separate the domain of the integral  into two regions:
 \[D_1=\{(\xi,\xi_1)\in D\times \R : |\xi_1|\lesssim|\xi|  \} ,\quad D_2=\{(\xi,\xi_1)\in D\times \R: |\xi_1|\gg|\xi| \}. \]
 In $D_1$, one has $\langle\xi_1\rangle \lesssim \langle\xi\rangle$ and  $\langle\xi-\xi_1\rangle \lesssim \langle\xi\rangle$, then
\begin{align*}
\int_{D_1}\frac{\langle\xi\rangle^{{-4}}\langle\xi_1\rangle^{2r}\langle\xi-\xi_1\rangle^{2r}}{(\max{\{\langle\xi\rangle,\langle\xi_1\rangle\}})^{12b-3}}d\xi d\xi_1\lesssim &\int_\R \langle\xi\rangle^{4r-4}\int_{|\xi_1|\lesssim |\xi|} \langle\xi_1\rangle^{-12b+3}d\xi_1d\xi\\
\lesssim &\langle\xi\rangle^{4r-12b+1},
\end{align*}
which is bounded  if $r<\frac54$ and $\frac12-b>0$ sufficient small.
    In $D_2$, one has $\langle\xi-\xi_1\rangle \sim \langle\xi_1\rangle$, then
\begin{align*}
\int_{D_2}\frac{\langle\xi\rangle^{{-4}}\langle\xi_1\rangle^{2r}\langle\xi-\xi_1\rangle^{2r}}{(\max{\{\langle\xi\rangle,\langle\xi_1\rangle\}})^{12b-3}}d\xi d\xi_1\lesssim& \int_\R \langle\xi\rangle^{-4}\int_{|\xi_1|\gg |\xi|} \langle\xi_1\rangle^{4r-12b+3}d\xi_1d\xi\\
\lesssim& \langle\xi\rangle^{4r-12b+1},
\end{align*}
which is bounded if $r<\frac12$ and $\frac12-b>0$ sufficient small.\\
\textbf{Case 2.} For $\tau_1(\tau-\tau_1)<0$ and $|\xi_1|\lesssim | \xi|$,  similar to  the Case 1, by dropping the $\langle\tau\rangle$ term,   it leads to
\[\|M\|_{L^2(\Omega)}^2\lesssim\int_{\R\times D} \frac{\langle\xi\rangle^{{-4}}\langle\xi_1\rangle^{2r}\langle\xi-\xi_1\rangle^{2r}}{\langle\tau\pm (|\xi_1|^3-|\xi-\xi_1|^3+\frac12|\xi_1|-\frac12|\xi-\xi_1|)\rangle^{4b-1}}d\xi d\xi_1 .\]
The integral is estimated in the following two regions:
 \[D_3=\{(\xi,\xi_1)\in D\times \R : \xi_1(\xi-\xi_1)\geq 0  \} ,\quad D_4=\{(\xi,\xi_1)\in D\times \R: \xi_1(\xi-\xi_1)<0\}. \]

 In $D_3$, by changing the variable $z=2\xi_1^3-3\xi_1^2\xi+3\xi_1\xi^2-\xi^3+\xi_1-\frac12 \xi$ with $|z|\lesssim \max{\{|\xi|^3,|\xi|\}}$ and
 \[dz = (6\xi_1^2-6\xi_1\xi+3\xi^2+1)d \xi_1=\Big(3(\xi_1-\xi)^2+3\xi_1^2+1\Big)d\xi_1,\]
one has,
\begin{align*}
&\int_{D_3} \frac{\langle\xi\rangle^{{-4}}\langle\xi_1\rangle^{2r}\langle\xi-\xi_1\rangle^{2r}}{\langle\tau\pm (|\xi_1|^3-|\xi-\xi_1|^3+\frac12|\xi_1|-\frac12|\xi-\xi_1|)\rangle^{4b-1}}d\xi_1 d\xi\\
\lesssim & \int_{D_3} \frac{\langle\xi\rangle^{{-4}}\langle\xi_1\rangle^{2r}\langle\xi-\xi_1\rangle^{2r}}{\langle\tau\pm (2\xi_1^3-3\xi_1^2\xi+3\xi_1\xi^2-\xi^3+\xi_1-\frac12 \xi)\rangle^{4b-1}}d\xi_1 d\xi\\
\lesssim & \int_{D\times \{|z|\lesssim \max{\{|\xi|^3,|\xi|\}}\}} \frac{\langle\xi\rangle^{{-4}}\langle\xi_1\rangle^{2r}\langle\xi-\xi_1\rangle^{2r}}{(3\xi_1^2+3(\xi-\xi_1)^2+1)\langle\tau\pm z\rangle^{4b-1}}dz d\xi\\
\lesssim & \int_{D\times \{|z|\lesssim \max{\{|\xi|^3,|\xi|\}}\}} \frac{\langle\xi\rangle^{{-4}}\langle\xi_1\rangle^{2r}\langle\xi-\xi_1\rangle^{2r}}{(\xi_1^2+(\xi-\xi_1)^2)\langle\tau\pm z\rangle^{4b-1}}dz d\xi\\
\lesssim &  \int_{D\times \{|z|\lesssim \max{\{|\xi|^3,|\xi|\}}\}} \frac{1}{\langle\xi\rangle^{{4}} \langle\tau\pm z\rangle^{4b-1} }dzd\xi,
\end{align*}
since for $r<\frac12$, $|\xi|\gtrsim 1$ and $|\xi_1|\lesssim |\xi|$,
\begin{align*}
\frac{\langle\xi_1\rangle^{2r}\langle\xi-\xi_1\rangle^{2r}}{\xi_1^2+(\xi-\xi_1)^2}\lesssim 1.
\end{align*}
Recall that $|\tau|\ll |\xi|^3$ in $D$, then,
\begin{align*}
&\int_{D\times \{|z|\lesssim \max{\{|\xi|^3,|\xi|\}}\}} \frac{1}{\langle\xi\rangle^{{4}} \langle\tau\pm z\rangle^{4b-1} }dzd\xi\\
\lesssim&\int_{D}\langle\xi\rangle^{-4}\int_{|z|\lesssim\max{\{|\xi|^3,|\xi|\}}} \frac{1}{ \langle\tau\pm z\rangle^{4b-1}} dzd\xi\\
\lesssim &\int \langle\xi\rangle^{-4}\langle\max{\{|\xi|^3,|\xi|\}}\rangle^{2-4b}d\xi,
\end{align*}
which is bounded for $\frac12-b>0$ sufficient small.

  In $D_4$, by changing variable $z=3\xi\xi_1^2-3\xi^2\xi_1+\xi^3+\frac12\xi$ with $|z|\lesssim \max{\{|\xi|^3,|\xi|\}}$ and $dz=3\xi(2\xi_1-\xi)d\xi_1$, one has,
\begin{align*}
&\int_{D_4} \frac{\langle\xi\rangle^{{-4}}\langle\xi_1\rangle^{2r}\langle\xi-\xi_1\rangle^{2r}}{\langle\tau\pm (|\xi_1|^3-|\xi-\xi_1|^3+\frac12|\xi_1|-\frac12|\xi-\xi_1|)\rangle^{4b-1}}d\xi_1 d\xi\\
\lesssim & \int_{D_4} \frac{\langle\xi\rangle^{{-4}}\langle\xi_1\rangle^{2r}\langle\xi-\xi_1\rangle^{2r}}{\langle\tau\pm (3\xi\xi_1^2-3\xi^2\xi_1+\xi^3+\frac12\xi)\rangle^{4b-1}}d\xi_1 d\xi\\
\lesssim & \int_{D\times \{|z|\lesssim \max{\{|\xi|^3,|\xi|\}}\}} \frac{\langle\xi\rangle^{{-4}}\langle\xi_1\rangle^{2r}\langle\xi-\xi_1\rangle^{2r}}{|\xi||2\xi_1-\xi|\langle\tau\pm z\rangle^{4b-1}}dz d\xi.
\end{align*}
Notice that $\xi_1(\xi-\xi_1)<0$ gives $|2\xi_1-\xi|>|\xi|$. In addition,  if $|\xi_1|\lesssim |\xi|$ then $|\xi-\xi_1| \lesssim |\xi|$, thus $\langle\xi-\xi_1\rangle\lesssim \langle\xi\rangle$.  Therefore, one has the bound
\begin{align*}
&\int_{D\times \{|z|\lesssim |\xi|^3\}} \frac{\langle\xi\rangle^{{-4}}\langle\xi_1\rangle^{2r}\langle\xi-\xi_1\rangle^{2r}}{|\xi(2\xi_1-\xi)|\langle\tau\pm z\rangle^{4b-1}}dz d\xi\\
\lesssim &\int_{D\times \{|z|\lesssim \max{\{|\xi|^3,|\xi|\}}\}} \frac{\langle\xi\rangle^{{-4}}\langle\xi\rangle^{2r}\langle\xi\rangle^{2r}}{|\xi|^2\langle\tau\pm z\rangle^{4b-1}}dz d\xi \\
\lesssim & \int_{D} \langle\xi\rangle^{4r-6}\int_{|z|\lesssim \max{\{|\xi|^3,|\xi|\}}}  \frac{1}{ \langle\tau\pm z\rangle^{4b-1}} dzd\xi\\
\lesssim& \int_\R \langle\xi\rangle^{4r-6}\langle\max{\{|\xi|^3,|\xi|\}}\rangle^{2-4b}d\xi,
\end{align*}
which is again bounded if $r<\frac12$ and $b-\frac12<0$ sufficient small.\\
\textbf{Case 3.} For  $\tau_1(\tau-\tau_1)<0$ and $|\xi_1|\gg |\xi|$,  the estimate for (\ref{M}) will be established directly. According to duality, it suffices to show that
\[\int_{\Omega_1 } M f(\xi_1,\tau_1)g(\xi-\xi_1,\tau-\tau_1)h(\tau)d\xi d\tau d\xi_1 d\tau_1\lesssim \|f\|_{L^2_{\xi,\tau}}\|g\|_{L^2_{\xi,\tau}}\|h\|_{L^2_{\tau}},\]
where $M$ is defined in \eqref{M0} for $j=2 $ and $$ \Omega_1= D \times \{\xi_1\in \R:|\xi_1|\gg |\xi|\}\times \{ (\tau,\tau_1)\in \R^2:\tau_1(\tau-\tau_1)<0\}.$$ Applying the Cauchy-Schwartz, Holder and Young's inequalities, it arrives at
\begin{align*}
&\int_{\Omega_1} M f(\xi_1,\tau_1)g(\xi-\xi_1,\tau-\tau_1)h(\tau)d\xi d\tau d\xi_1 d\tau_1\\
\lesssim &\left\|\int_{\Omega_2}M g(\xi-\xi_1,\tau-\tau_1)h(\tau)d\xi d\tau\right\|_{L^2_{\xi_1,\tau_1}}\|f\|_{L^2_{\xi_1,\tau_1}}\\
\lesssim& \left\|\|M \langle\xi\rangle^{\frac12+}\|_{L^2(\Omega_2)} \|g(\xi-\xi_1,\tau-\tau_1)h(\tau)\langle\xi\rangle^{-\frac12-}\|_{L^2_{\xi,\tau}}\right\|_{L^2_{\xi_1,\tau_1}}\|f\|_{L^2_{\xi_1,\tau_1}}\\
\lesssim & \sup_{\xi_1,\tau_1} \|M \langle\xi\rangle^{\frac12+}\|_{L^2(\Omega_2)} \|g^2* (h(\tau)\langle\xi\rangle^{-\frac12-})^2\|^{\frac12}_{L^1_{\xi_1,\tau_1}}\\
\lesssim & \sup_{\xi_1,\tau_1} \|M \langle\xi\rangle^{\frac12+}\|_{L^2(\Omega_2)} \|\langle\xi\rangle^{-\frac12-}\|_{L^2_{\xi}}\|h\|_{L^2_{\tau}}\|f\|_{L^2_{\xi,\tau}}\|g\|_{L^2_{\xi,\tau}},
\end{align*}
where $\Omega_2=\{(\xi,\tau):{|\xi|^3\gg |\tau|, |\xi|\gtrsim 1,|\xi_1|\gg |\xi|}, \tau_1(\tau-\tau_1)<0\}$.
It remains to show that for $\tau_1>0$ and $\tau-\tau_1<0$,
\[ \|M \langle\xi\rangle^{\frac12+}\|_{L^2(\Omega_2)}= \int_{ \Omega_2}\frac{\langle\tau\rangle^{\frac{-2r-2}{3}}\langle\xi\rangle^{{-3+}}\langle\xi_1\rangle^{2r}\langle\xi-\xi_1\rangle^{2r}d\xi d\tau}
{\langle\tau_1-|\xi_1|^3-\frac12|\xi_1|\rangle^{2b}\langle-\tau+\tau_1-|\xi-\xi_1|^3-\frac12|\xi-\xi_1|\rangle^{2b}} \]
is bounded, since $\tau_1<0$ and $\tau-\tau_1>0$ can be established similarly. Notice that in $\Omega_2$, one has $\langle\xi-\xi_1\rangle\sim \langle\xi_1\rangle$, $|\xi-\xi_1|^3-|\xi_1|^3+\frac12|\xi-\xi_1|-\frac12|\xi_1|=\pm |\xi|(\xi^2-3\xi\xi_1+3\xi_1^2+1)$ and $\langle\tau\pm |\xi|(\xi^2-3\xi\xi_1+3\xi_1^2+1)\rangle\sim \langle\xi\xi_1^2\rangle\sim \langle\xi\rangle\langle\xi_1\rangle^2$. These lead to
\begin{align*}
&\int_{\Omega_2}\frac{\langle\tau\rangle^{\frac{-2r-2}{3}}\langle\xi\rangle^{{-3+}}\langle\xi_1\rangle^{2r}\langle\xi-\xi_1\rangle^{2r}}
{\langle\tau_1-|\xi_1|^3-\frac12|\xi_1|\rangle^{2b}\langle-\tau+\tau_1-|\xi-\xi_1|^3-\frac12|\xi-\xi_1|\rangle^{2b}}d\xi d\tau\\
\lesssim & \int_{\Omega_2}\frac{\langle\tau\rangle^{\frac{-2r-2}{3}}\langle\xi\rangle^{{-3+}}\langle\xi_1\rangle^{2r}\langle\xi_1\rangle^{2r}}
{\langle\tau+|\xi-\xi_1|^3-|\xi_1|^3+\frac12|\xi-\xi_1|-\frac12|\xi_1|\rangle^{2b}}d\xi d\tau\\
\lesssim & \langle\xi_1\rangle^{4r}\int_{|\xi|\ll |\xi_1|} \frac{\langle\xi\rangle^{{-3+}}}{\langle\xi\rangle^{2b}\langle\xi_1\rangle^{4b}}\int_{|\tau|\ll|\xi|^3}\langle\tau\rangle^{\frac{-2r-2}{3}}d\tau d\xi\\
\lesssim &\langle\xi_1\rangle^{4r-4b}\int_{|\xi|\ll |\xi_1|} \langle\xi\rangle^{-2b-3+}\langle\xi\rangle^{-2r+1} d\xi\lesssim \langle\xi_1\rangle^{2r-6b-1+},
\end{align*}
which is bounded if $r<\frac{1}{2}$ and $b<\frac12$. The proof is now complete.
\end{proof}

\section{Local Well-Posedness}

Now, we consider the  nonlinear problem:
\begin{equation}\label{hsb}
\begin{cases}
u_{tt}-u_{xx}+\beta u_{xxxx}-u_{xxxxxx}+(u^2)_{xx}=0,\hspace{0.3in}\mbox{for }x>0\mbox{, }t>0,\\
u(x,0)=\varphi (x), u_t(x,0)= \psi''(x),\\
u(0,t)=h_1(t), u_ {xx}(0,t)=h_2(t), u_{xxxx}(0,t)=h_3(t).
\end{cases}
\end{equation}
Let $Q_{s}$ be defined as below
\begin{equation*}
    Q_{s}=H^s(\mathbb{R}^+)\times H^{s-1}(\mathbb{R}^+)\times {\mathcal H}^s(\R^+),
\end{equation*}
where
\[{\mathcal H}^s(\R^+):= H^{\frac{s+2}{3}}(\R^+)\times H^{\frac{s}{3}}(\R^+)\times H^{\frac{s-2}{3}}(\R^+),\]
with its usual product topology.
Let $Y_{s,T}$ be the collection of
\begin{equation*}
    v\in C(0,T; H^s(\mathbb{R}^+))\cap X^{s,b}(\R^+\times (0,T)),
\end{equation*}
with its norm   defined as
\begin{equation*}
    \|v\|_{Y_{s,T}}=\sup_{t<T} \|v(\cdot,t)\|_{H^s(\mathbb{R}^+)}+\|v(x,t)\|_{X^{s,b}(\R^+\times (0,T))}.
\end{equation*}

\begin{proof}
(\textbf{Theorem \ref{theorem}})
For $T>0$, we write
\begin{align*}
u(x,t)=\eta(t)W_\R(\varphi^*,\psi^*)&+\eta(t)W_{bdr}(\vec{h}-\vec{p})\\&+\eta_T(t)
\left(\int^t_0 [W_\R(0,g)](x,t-\tau)d\tau-W_{bdr}(\vec{q})\right),
\end{align*}
where $g=\eta_T(t) u^2$, $\vec{p}$ as defined in Lemma \ref{nonf} and
$\vec{q}=\eta_T(t)  (q_1,q_2,q_3)$ with $q_j$ for $j=1,2,3$ defined in Lemma \ref{force}.
 It is can be verified
that $u= u(x,t)$ solves the IBVP in $[0,T]$ for $T<1$.
For given $(\varphi ,\psi ,\vec{h})\in Q_{s}$, let $r>0$ and $T>0$ be constants to be determined later. Let
\begin{equation*}
    S_{r,T}=\{u\in Y_{s,T}, \|u\|_{Y_{s,T}}\leq r\},
\end{equation*}
then the set $S_{r,T}$ is a convex, closed and bounded subset of $Y_{s,T}$. Define a map $\Gamma$ on $S_{r,T}$ by
\begin{equation*}
    \Gamma(u)=u(x,t),
\end{equation*}
 for $u\in S_{r,T}$.
We will show that $\Gamma$ is a contraction map from $S_{r,T}$ to $S_{r,T}$ for proper $r$ and $T$.  Applying Lemma \ref{bdr}, \ref{nonf} and \ref{force}, Proposition \ref{1} and \ref{hi} yields that,
\begin{align*}
\|\Gamma(u)\|_{H^s(\R^+)}\lesssim &\|\eta(t)W_\R(\varphi^*,\psi^*)\|_{H_x^s(\R^+)}+\|\eta(t)W_{bdr}(\vec{h}-\vec{p})\|_{H_x^s(\R^+)}\\
&+\left\|\eta_T(t)
\left(\int^t_0 [W_\R(0,u^2)](x,t-\tau)d\tau-W_{bdr}(\vec{q})\right)\right\|_{H_x^s(\R^+)}\\
\lesssim & \|(\varphi,\psi,\vec{h})\|_{Q_s}\\
&+\left\|\eta_T(t)
\left(\int^t_0 [W_\R(0,u^2)](x,t-\tau)d\tau-W_{bdr}(\vec{q})\right)\right\|_{X^{s,a_1}},
\end{align*}
with $a_1= \frac{3-2b}{4}$ and $b=\frac12-$. Recall the fact $X^{s,a_1}\subseteq C^0_t H^s_x$ for $a_1>\frac12$ (c.f. \cite{54,53,52,50}), thus, according to Lemma \ref{a}, \ref{f1} and Proposition \ref{t1},
\begin{align*}
\left\|\eta_T(t)\int^t_0 [W_\R(0,g)](x,t-\tau)d\tau\right\|_{X^{s,a_1}}\lesssim & T^{1-(a_1+b)}\left\|\frac{\xi^2\widehat{g}}{\phi(\xi)}\right\|_{X^{s,-b}}\\
\lesssim &T^{\frac14-\frac{b}{2}}\|u\|^2_{X^{s,b}}.
\end{align*}
 Moreover, according to Lemma \ref{a}, Proposition \ref{2}, \ref{k1}   and  set $a_2=\frac{6b-1}{4}<b$,
\begin{align*}
\left\|\eta_T(t)[W_{bdr}(\vec{q})](x,t)\right\|_{X^{s,a_1}} \lesssim  \|\vec{q}\|_{{\mathcal H}^s} &\lesssim \left\|\frac{\xi^2\widehat{g}}{\phi(\xi)}\right\|_{X^{s,-a_2}}\lesssim \|\eta_T u\|_{X^{s,a_2}}\| u\|_{X^{s,a_2}}\\
&\lesssim T^{b-a_2}\|u\|^2_{X^{s,b}}= T^{\frac14-\frac{b}{2}}\|u\|^2_{X^{s,b}}.
\end{align*}
Hence,
\[\|\Gamma(u)\|_{H^s(\R^+)}\leq C\|(\varphi,\psi,\vec{h})\|_{Q_s}+ 2CT^{\frac14-\frac{b}{2}}r^2.\]
Furthermore, we have,
\begin{align*}
\|\Gamma(u)\|_{X^{s,b}}\lesssim &\|\eta(t)W_\R(\varphi^*,\psi^*)\|_{X^{s,b}}+\|\eta(t)W_{bdr}(\vec{h}-\vec{p})\|_{X^{s,b}}\\
&+\left\|\eta_T(t)
\left(\int^t_0 [W_\R(0,g)](x,t-\tau)d\tau-W_{bdr}(\vec{q})\right)\right\|_{X^{s,b}}\\
\lesssim & \|(\varphi,\psi,\vec{h})\|_{Q_s}\\
&+\left\|\eta_T(t)
\left(\int^t_0 [W_\R(0,g)](x,t-\tau)d\tau-W_{bdr}(\vec{q})\right)\right\|_{X^{s,b}}.
\end{align*}
Again, according to Proposition \ref{k1}, \ref{t1} and \ref{t2}, Lemma \ref{f1}, one has,
\begin{align*}
\left\|\eta_T(t)
\int^t_0 [W_\R(0,g)](x,t-\tau)d\tau \right\|_{X^{s,b}}\lesssim & T^{1-b-a_2}\left\|\frac{\xi^2\widehat{g}}{\phi(\xi)}\right\|_{X^{s,-a_2}}\\
\lesssim& T^{\frac54-\frac{5b}{2}}T^{\frac14-\frac{b}{2}}\|u\|^2_{X^{s,b}},
\end{align*}
and
\[
\left\|\eta_T(t)[W_{bdr}(\vec{q})](x,t)\right\|_{X^{s,b}} \lesssim  \|\vec{q}\|_{{\mathcal H}^s} \lesssim  T^{\frac14-\frac{b}{2}}\|u\|^2_{X^{s,b}}.
\]
Thus,
\[\|\Gamma(u)\|_{X^{s,b}}\leq C\|(\varphi,\psi,\vec{h})\|_{Q_s}+ C(T^{\frac32-3b}+T^{\frac14-\frac{b}{2}})r^2,\]
and we can choose $T$ small enough depending on $r$  such that, \[r=2C\|(\varphi,\psi,\vec{h})\|_{Q_s},\quad (T^{\frac32-3b}+T^{\frac14-\frac{b}{2}})r\leq \frac12, \quad 2T^{\frac14-\frac{b}{2}}r\leq \frac12\] then,
\[\|\Gamma(u)\|_{C(0,T;H^s(\R^+))\cap X^{s,b}(\R^+\times(0,T))}\leq r.\]
Similar estimates can be drawn for $\Gamma(u)-\Gamma(v)$, therefore for such $r$ and $T$  the map $u=\Gamma(u)$ is contraction.
\end{proof}

%
%
%
%
%
%
%

\end{document}